\def\R{\mathbb R}
\def\cal{\mathcal}
\def\la{\lambda}
\newcommand{\medint}{-\kern -,375cm\int}         
\newcommand{\medintinrigo}{-\kern -,315cm\int}
\def\Om{\Omega}
\def\pa{\partial}
\def\div{{\rm div}\,}
\newcommand\esssup{{\rm \,esssup\,}}
\numberwithin{equation}{section} \textwidth15cm \textheight22cm
\newtheorem{theorem}{Theorem}[section]
\newtheorem{lemma}[theorem]{Lemma}
\newtheorem{proposition}[theorem]{Proposition}
\theoremstyle{definition}
\begin{document}

\title[Asymptotic behaviors of solutions to quasilinear equation]{Asymptotic Behaviors of Solutions to  quasilinear elliptic
Equations with critical Sobolev growth and  Hardy potential}
\author{ Chang-Lin Xiang }
\address[]{Department of Mathematics and Statistics, P.O. Box 35 (MaD) FI-40014 University of Jyv\"askyl\"a, Finland}
 \email[]{changlin.c.xiang@jyu.fi}

\begin{abstract}
Optimal estimates on the asymptotic behaviors  of weak solutions both at the  origin and  at  the  infinity are obtained
to the following quasilinear elliptic equations
\[-\Delta_{p}u-\frac{\mu}{|x|^{p}}|u|^{p-2}u=Q(x)|u|^{\frac{Np}{N-p}-2}u,\quad\,x\in \mathbb{R}^{N},\]
where $1<p<N,
0\leq\mu<\left({(N-p)}/{p}\right)^{p}$ and $Q\in
L^{\infty}(\mathbb{R}^{N})$.
\end{abstract}

\maketitle

{\small

\keywords{\noindent {\bf Keywords:} Quasilinear elliptic
equations; Hardy's inequality; Comparison principle; Asymptotic
behaviors }

\smallskip

\subjclass{\noindent {\bf 2010 Mathematics Subject Classification:
} 35J60 35B33

\tableofcontents }}                 
\bigskip
\section{Introduction and main results}

Let $1<p<N$, $ p^{*}=Np/{(N-p)}$ and $\
0\leq\mu<\bar{\mu}=\left((N-p)/{p}\right)^{p}$. In this
paper, we study the following quasilinear elliptic equations
\begin{equation}
-L_{p}u\equiv-\Delta_{p}u-\frac{\mu}{|x|^{p}}|u|^{p-2}u=Q(x)|u|^{p^{*}-2}u,
\quad\, x\in \mathbb{R}^{N}, \label{eq:object}
\end{equation}
where
\begin{eqnarray*}
\Delta_{p}u=\sum_{i=1}^{N}\partial_{x_{i}}(|\nabla
u|^{p-2}\partial_{x_{i}}u),&&\nabla
u=(\partial_{x_{1}}u,\cdots,\partial_{x_{N}}u)
\end{eqnarray*}
is the $p$-Laplacian operator and $Q\in
L^{\infty}(\mathbb{R}^{N})$. It is well known that equation
(\ref{eq:object}) is the Euler-Lagrange equation of the energy
functional $E:\mathcal{D}^{1,p}(\mathbb{R}^{N})\to\R$ defined by
\[
E(u)=\frac{1}{p}\int_{\mathbb{R}^{N}}\left(|\nabla
u|^{p}-\mu\frac{|u|^{p}}{|x|^{p}}\right)-\frac{1}{p^{*}}\int_{\mathbb{R}^{N}}Q|u|^{p^{*}},\quad
u\in \mathcal{D}^{1,p}(\mathbb{R}^{N}),
\]
where  $\mathcal{D}^{1,p}(\mathbb{R}^{N})$ is the function space
defined as
\[
\mathcal{D}^{1,p}(\mathbb{R}^{N})=\left\{v\in
L^{p^{*}}(\mathbb{R}^{N}):v\:\text{is weakly differentiable
and}\:\nabla v\in L^{p}(\mathbb{R}^{N})\right\}
\]
equipped with the seminorm
$\ensuremath{||v||_{\mathcal{D}^{1,p}(\mathbb{R}^{N})}=||\nabla
v||_{L^{p}(\mathbb{R}^{N})}}$. All of the integrals in energy
functional $E$ are well defined, due to the Sobolev inequality
\begin{eqnarray*}
C\left(\int_{\mathbb{R}^{N}}|\varphi|^{p^{*}}\right)^{\frac{p}{p^{*}}}\leq
\int_{\mathbb{R}^{N}}|\nabla\varphi|^{p},&&\forall\:\varphi\in\mathcal{D}^{1,p}(\mathbb{R}^{N}),
\end{eqnarray*} where $C=C(N,p)$
 is a positive constant, and  due to the Hardy inequality (see  \cite[Lemma 1.1]{B})
\begin{eqnarray*}
\left(\frac{N-p}{p}\right)^p\int_{\mathbb{R}^{N}}\frac{|\varphi|^{p}}{|x|^{p}}\leq\int_{\mathbb{R}^{N}}
|\nabla\varphi|^{p},&&\forall\:\varphi\in\mathcal{D}^{1,p}(\mathbb{R}^{N}).
\end{eqnarray*}

 A function $u\in
\mathcal{D}^{1,p}(\mathbb{R}^{N})$ is a weak subsolution of
equation (\ref{eq:object}) if for any nonnegative function $\varphi\in
C_{0}^{\infty}(\mathbb{R}^{N})$ we have
\[
\int_{\mathbb{R}^{N}}\left(|\nabla u|^{p-2}\nabla
u\cdot\nabla\varphi-\frac{\mu}{|x|^{p}}|u|^{p-2}u\varphi\right)\leq
\int_{\mathbb{R}^{N}}Q(x)|u|^{p^{*}-2}u\varphi.
\]
A function  $u$ is  a supersolution of  equation
(\ref{eq:object}) if $-u$ is a subsolution,  and $u$  a weak
solution of equation (\ref{eq:object}) if $u$ is both a weak
subsolution and a weak supersolution.

Equation (\ref{eq:object}) and its variants have been extensively
studied in the literature. For the existence of solutions to
equation (\ref{eq:object}), we refer to e.g.
\cite{Peral2004,B,Cao,Cao2,Cao4, Cao3, C,E,F,Veron1989,H,Smets}.
For the  uniqueness  of solutions to equation (\ref{eq:object}),
we refer to e.g. \cite{Caffarelli1989,C,ChouKS1993,GNN,Terracini}.
In the present paper, we study the asymptotic behaviors of weak
solutions to equation (\ref{eq:object}).

In the case $\mu=0$, a prototype of equation (\ref{eq:object})
(when $Q\equiv 1$) is
\begin{eqnarray}
-\Delta_{p}u=|u|^{p^{*}-2}u, &  &
\hbox{in}\,\,\mathbb{R}^{N}.\label{eq: mu is 0}
\end{eqnarray}
The boundedness of weak solutions to equation (\ref{eq: mu is 0})
in the neighborhood of the origin is well known. As to the
asymptotic behavior of solutions at the infinity, when $p=2$, it
was proved by Gidas, Ni and Nirenberg \cite{GNN} that positive
$C^2$ solutions  (not necessarily in the space
$\cal{D}^{1,2}(\R^N)$) of equation (\ref{eq: mu is 0}) satisfying
\begin{equation}\liminf_{|x|\to
\infty}\left(|x|^{N-2}u(x)\right)<\infty,\label{eq: GNN
condition}\end{equation} must be of the form $ u(x)=\lambda^{\frac
{N-2}2}u_0(\lambda(x-x_0))$ for some $\lambda
>0$ and $x_0\in\mathbb{R}^N$, where
\begin{equation}u_0(x)=(N(N-2))^{\frac {N-2}4}\left({1+|x|^2}
\right)^{-\frac {N-2}2}.\label{eq: exact form of solution
1}\end{equation} Hypothesis (\ref{eq: GNN condition}) was removed
by Caffarelli, Gidas and Spruck in \cite{Caffarelli1989}.
 Thus for positive $C^2$
solutions of equation (\ref{eq: mu is 0}),  we have
\begin{eqnarray*}|u(x)|\le C|x|^{2-N} && \hbox{for}\: |x|> 1\end{eqnarray*} for some positive constant $C$.
 This estimate has been proved to be true for  all weak
solutions in $\cal{D}^{1,2}(\R^N)$ of (\ref{eq: mu is 0}), see Cao
and Yan \cite{Cao3}.
 For $p\ne2$, Cao, Peng and Yan
\cite{Cao4} proved that for any weak solutions $u\in
\cal{D}^{1,p}(\R^N)$ of equation \eqref{eq: mu is 0}  we have
\begin{eqnarray}
|u(x)|\le C|x|^{-\frac{N-p}{p-1}+\theta} &
&\hbox{for}\: |x|> 1\label{eq: C-P-Y p not equal to 2}
\end{eqnarray}  for any $\theta>0$ and  some
positive constant $C$ (depending also on $\theta$).
 We remark that
 their result can be easily
extended,  by the same approach in \cite{Cao3}, to   equation
(\ref{eq:object}) ($\mu=0$) in the presence of a bounded function
$Q$.

We will focus on the case $\mu\ne0$. When $p=2$,
 the behavior of weak
solutions to equation (\ref{eq:object})  at origin is known. It was proved
that if $u$ is a weak solution of equation (\ref{eq:object}), then
\begin{eqnarray}
|u(x)|\le
C|x|^{-\left(\sqrt{\overline{\mu}}-\sqrt{\overline{\mu}-\mu}\right)} &&\hbox{for}\:
|x|<1\label{eq: estimate for p=2 at origin}
\end{eqnarray}
for some positive constant $C$, see \cite[Theorem 1.1]{Cao2}.  In
addition, if the function $Q$ is nonnegative  and the solution $u$
is also nonnegative,    Han \cite{H2} proved that
\begin{eqnarray}
u(x)\ge
C|x|^{-\left(\sqrt{\overline{\mu}}-\sqrt{\overline{\mu}-\mu}\right)}&&\hbox{for}\:
|x|<1\label{eq: inverse result for p=2}
\end{eqnarray} for some
nonnegative constant $C$. In fact,  Ferrero and Gazzola \cite{F}
proposed the problem of studying the asymptotic behavior  of
eigenfunctions of the operator $-L_{2}$ on bounded domain,
that is, to study the asymptotic behavior  of weak
solutions  to the linear equation as $x\to 0$
\begin{eqnarray}
-L_2 u\equiv -\Delta u-\frac{\mu}{|x|^{2}}u=\lambda u,&& u\in
H_{0}^{1}(\Omega),\label{eq: eigenvalue problem}
\end{eqnarray}
where $\lambda\in\mathbb{R}$ and $\Omega$ is a bounded domain
containing the origin. Cabr\'e and Martel \cite{Cab} obtained
\eqref{eq: estimate for p=2 at origin} for  the first
eigenfunctions in the case when $\Omega$ is a unit ball. Cao and
Han \cite{Cao}  proved \eqref{eq: estimate for p=2 at origin}  for
all solutions of equation (\ref{eq: eigenvalue problem}).  The
approach of \cite{Cao} is as follows: if $u$ is a weak solution to
equation \eqref{eq: eigenvalue problem}, then the function
$v(x)\equiv|x|^{\sqrt{\overline{\mu}}-\sqrt{\overline{\mu}-\mu}}u(x)$
satisfies the equation
\begin{eqnarray*}
-\div\left(|x|^{-2\left(\sqrt{\overline{\mu}}-\sqrt{\overline{\mu}-\mu}\right)}\nabla
v\right)=\lambda |x|^{-2\left(\sqrt{\overline{\mu}}-\sqrt{\overline{\mu}-\mu}\right)}v&&x\in\Omega,\label{eq: CKN}
\end{eqnarray*}
which is a weighted elliptic equation in divergence form.  By
means of   Moser's iteration technique \cite{Moser1960}, $v$ is
proved to be bounded, which is equivalent to (\ref{eq: estimate
for p=2 at origin}). In \cite{H2}, the author applied the same
method to deal with more general type of  equation  than equation
(\ref{eq:object}) when $p=2$, and proved the estimates (\ref{eq:
estimate for p=2 at origin}) and \eqref{eq: inverse result for
p=2}.  Obviously, the approach of \cite{Cao} is not
 applicable to  general quasilinear equations when $p\ne 2$.

As to asymptotic behaviors of solutions of equation
(\ref{eq:object}) at infinity when $p=2$, to the best of the author's knowledge,  all known results are concerned with the particular
case $Q\equiv1$. That is,  consider the equation
\begin{eqnarray}
-L_2 u\equiv -\Delta u-\frac {\mu}{|x|^2}u=|u|^{2^{*}-2}u, &  &
\hbox{in}\,\,\mathbb{R}^{N}.\label{eq: object of p equal to 2}
\end{eqnarray} For any positive solution $u\in C^2(\R^N\backslash\{0\})$ of equation
(\ref{eq: object of p equal to 2}) satisfying \begin{equation}
v(x)\equiv |x|^{\sqrt{\overline{\mu}}-\sqrt{\overline{\mu}-\mu}}u(x)
\in L^{\infty}_{loc}(\R^N),
\label{eq: boundedness condition }\end{equation} i.e., the
function $v$
 is
locally bounded in $\R^N$,
then  a direct calculation verifies that  $v$ satisfies  the
conditions of Theorem B of Chou and Chu \cite{ChouKS1993}, and
thus $v$ is radially symmetric with respect to the origin by
\cite[Theorem B]{ChouKS1993}. Therefore $u$ is radially symmetric with
respect to the origin.
 Catrina and Wang \cite{C} and Terracini
\cite{Terracini} proved that all positive radial solutions of
(\ref{eq: object of p equal to 2}) are of the form
$u(x)=\lambda^{\frac{N-2}{2}}u_0(\lambda  x)$ for some
$\lambda>0$,
where\begin{equation}u_0(x)=\left({4N(\bar{\mu}-\mu)}/{(N-2)}\right)^{\frac
{N-2}4}\left(
|x|^{\frac{\sqrt{\overline{\mu}}-\sqrt{\overline{\mu}-\mu}
}{\sqrt{\bar{\mu}}}}+|x|^{\frac{\sqrt{\overline{\mu}}+\sqrt{\overline{\mu}-\mu}
}{\sqrt{\bar{\mu}}}}\right)^{-\frac{N-2}2}.\label{eq: exact form
of solution 2}\end{equation}
 Thus for any positive solution $u\in C^2(\R^N\backslash\{0\})$ to
equation (\ref{eq: object of p equal to 2}) satisfying \eqref{eq:
boundedness condition }, there is a $\la>0$ such that $u(x)=\lambda^{\frac{N-2}{2}}u_0(\lambda  x)$. Consequently   we have
\begin{eqnarray*}
|u(x)|\le
C|x|^{-\left(\sqrt{\overline{\mu}}+\sqrt{\overline{\mu}-\mu}\right)}&&\hbox{for}\:
|x|> 1.\label{eq: estimate for p equal to 2 at infinity}
\end{eqnarray*} for some positive constant $C$.
 Cao and Yan \cite[Lemma B.2]{Cao3} proved this estimate for all weak
 solutions in $\cal{D}^{1,2}(\R^N)$ of equation \eqref{eq: object of p equal to 2}. Their method depends on  the Kelvin transformation
$v(x)=|x|^{2-N}u\left(|x|^{-2}{x}\right)$. It seems that this
approach does not work for general quasilinear equations
\eqref{eq:object} when $p\ne 2$.

Much less is known in  the case $\mu\ne0$   and  $p\ne2$.
Boumediene, Veronica and Peral \cite[Theorem 3.13]{B} classified
all weak
 positive \emph{radial} solutions in $\cal{D}^{1,p}(\R^N)$ of equation
(\ref{eq:object}) when $Q\equiv 1$. They are of the form
$u(x)=\lambda^{\frac {N-p}p}u_0(\lambda x)$ for some $\lambda>0$,
where   $u_0$ is a particular weak positive radial solution in
$\cal{D}^{1,p}(\R^N)$ (see \cite[Theorem 3.13]{B}) satisfying
\begin{eqnarray}
\lim_{|x|\rightarrow0}u_0(x)|x|^{\gamma_{1}}=C_{1},&&\lim_{|x|\rightarrow
\infty}u_0(x)|x|^{\gamma_{2}}=C_{2}\label{est:optimal result}
\end{eqnarray} for some positive constants $C_{1},C_{2}$.
 In (\ref{est:optimal result}) and throughout the paper,   $\gamma_{1},\gamma_{2}\in[0,\infty), \gamma_1<\gamma_2$,  are defined as the two roots of the
equation
\begin{equation}\label{eq: gamma 1 and gamma 2}
\gamma^{p-2}[(p-1)\gamma^{2}-(N-p)\gamma]+\mu=0.
\end{equation} While the exact form  (\ref{eq: exact form of solution 2}) of the positive radial solutions to equation (\ref{eq:object}) is known
when $p= 2$ and $Q\equiv 1$,    the exact form of the positive radial  solution  $u_0$ to  equation (\ref{eq:object})
 when $p\ne 2$ and $Q\equiv 1$ seems to be unknown.

For later use, we note that
\[
0\leq\gamma_{1}<\frac{N-p}{p}<\gamma_{2}\leq\frac{N-p}{p-1}.
\]
In the case $p=2$,
${\gamma_{1}}=\sqrt{\overline{\mu}}-\sqrt{\overline{\mu}-\mu}$ and
${\gamma_{2}}=\sqrt{\overline{\mu}}+\sqrt{\overline{\mu}-\mu}$,
and in the case $\mu=0$, $\gamma_{1}=0$ and $\gamma_{2}=(N-p)/(p-1)$.

In this paper, we give a complete description on the asymptotic
behaviors of  weak solutions to equation (\ref{eq:object}) at the
origin and at the infinity.
\begin{theorem}
\label{thm:main result} Let $Q\in
L^{\infty}(\mathbb{R}^{N})$ and $u\in\mathcal{D}^{1,p}(\mathbb{R}^{N})$  be a weak solution of equation (\ref{eq:object}).  Then there exists a positive  constant $C$ depending on $N,p,\mu,||Q||_{\infty}$ and $u$, such that
\begin{equation}
|u(x)|\leq {C}{|x|^{-\gamma_{1}}}\quad for\;|x|<
R_{0},\label{eq:origin growth}
\end{equation}
 and
\begin{equation}
|u(x)|\leq{C}{|x|^{-\gamma_{2}}}\quad for\;|x|>
R_{1},\label{eq:infinity decay}
\end{equation}
where $R_{0}>0$ and $R_{1}>0$ depend on
$N,p,\mu,||Q||_{\infty}$ and $u$.\end{theorem}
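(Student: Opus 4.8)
The plan is to combine a Moser-type iteration --- which gives local boundedness away from the origin together with \emph{crude} polynomial bounds near $0$ and near $\infty$ --- with a comparison argument against the explicit radial barriers $|x|^{-\gamma_1}$ and $|x|^{-\gamma_2}$. A direct computation shows that, for $\gamma>0$,
\[
-L_p\bigl(|x|^{-\gamma}\bigr)=h(\gamma)\,|x|^{-\gamma(p-1)-p},\qquad h(\gamma):=\gamma^{p-1}\bigl(N-p-(p-1)\gamma\bigr)-\mu,
\]
and equation \eqref{eq: gamma 1 and gamma 2} says precisely that $h(\gamma_1)=h(\gamma_2)=0$; moreover $h>0$ on $(\gamma_1,\gamma_2)$ and $h<0$ outside $[\gamma_1,\gamma_2]$. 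Thus $|x|^{-\gamma_1}$ solves $-L_pw=0$ in a punctured neighbourhood of $0$ and $|x|^{-\gamma_2}$ solves it near $\infty$ (with removable singularity, since $\gamma_1<\tfrac{N-p}{p}<\tfrac{N-p}{p-1}$ and $\gamma_2>\tfrac{N-p}{p}$ put these powers in the relevant energy spaces), the powers $|x|^{-\beta}$ with $\beta\in(\gamma_1,\gamma_2)$ are strict positive supersolutions, and those with $\beta\notin[\gamma_1,\gamma_2]$ are strict subsolutions. These are the building blocks of the barriers.

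\emph{Step 1 (reduction and crude bounds).} Since $u\in L^{p^*}(\mathbb{R}^N)$, write the right-hand side as $Q|u|^{p^*-2}u=V|u|^{p-2}u$ with $V:=Q|u|^{p^*-p}$; from $(p^*-p)\tfrac{N}{p}=p^*$ one gets $|V|^{N/p}=|Q|^{N/p}|u|^{p^*}\in L^1(\mathbb{R}^N)$, hence $V\in L^{N/p}(\mathbb{R}^N)$, and by absolute continuity of the integral we may fix a small $R_0$ and a large $R_1$ with $\|V\|_{L^{N/p}(B_{2R_0})}$ and $\|V\|_{L^{N/p}(\mathbb{R}^N\setminus B_{R_1/2})}$ as small as we wish. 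A Moser/Serrin iteration for $-L_pu=V|u|^{p-2}u$ then gives $u\in L^\infty_{\mathrm{loc}}(\mathbb{R}^N\setminus\{0\})$ and $u(x)\to0$ as $|x|\to\infty$: away from $0$ the Hardy coefficient $\mu|x|^{-p}$ is a bounded potential, while inside $B_{R_0}$ it is absorbed into the Dirichlet energy using $\mu<\bar\mu$ and Hardy's inequality, the smallness of $\|V\|_{L^{N/p}(B_{2R_0})}$ closing the scheme. Bootstrapping the integrability of $u$ through the equation in the spirit of Br\'ezis--Kato then yields the crude bounds
\[
|u(x)|\le C|x|^{-\sigma_0}\ \text{ in }B_{R_0},\qquad |u(x)|\le C|x|^{-\tau_0}\ \text{ in }\mathbb{R}^N\setminus B_{R_1},
\]
with $\sigma_0<\tfrac{N-p}{p}<\tau_0$, the separation from $\tfrac{N-p}{p}$ being forced by $u\in L^{p^*}$.

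\emph{Step 2 (comparison and iteration to the sharp exponents).} We argue near the origin; the argument near infinity is symmetric, with $|x|^{-\gamma_2}$ replacing $|x|^{-\gamma_1}$ and $|x|\to\infty$ replacing $|x|\to0$. Suppose $|u|\le C|x|^{-\sigma}$ in $B_{R_0}$ for some $\gamma_1<\sigma<\tfrac{N-p}{p}$. Then the forcing satisfies $|Q||u|^{p^*-1}\le C'|x|^{-\sigma(p^*-1)}$, and because $\sigma<\tfrac{N-p}{p}$ we have $\sigma(p^*-p)<p$, i.e.\ $\sigma(p^*-1)>\sigma(p-1)+p$: the critical term is genuinely lower order than the Hardy operator on $B_{R_0}$. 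Choosing $\sigma_1:=\max\bigl(\gamma_1,\tfrac{\sigma(p^*-1)-p}{p-1}\bigr)$, so that $\gamma_1\le\sigma_1<\sigma$, and the supersolution $\bar u:=A|x|^{-\gamma_1}+B|x|^{-\sigma_1}$ with $A,B$ large enough that $-L_p\bar u\ge C'|x|^{-\sigma(p^*-1)}$ in $B_{R_0}$ and $\bar u\ge|u|$ on $\partial B_{R_0}$, the weak comparison principle for $-L_p$ on $B_{R_0}$ --- which holds because $u,\bar u\in\mathcal{D}^{1,p}(B_{R_0})$ ($\bar u$ a supersolution on all of $B_{R_0}$, the singularity at $0$ removable since $\sigma_1<\tfrac{N-p}{p}$), $(|u|-\bar u)_+\in\mathcal{D}^{1,p}_0(B_{R_0})$, and $\mu<\bar\mu$ (a Picone-type inequality handles the inverse-$p$th-power weight) --- forces $|u|\le\bar u\le C''|x|^{-\sigma_1}$ in $B_{R_0}$. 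Since $\sigma-\sigma_1\ge\tfrac{p-\sigma_0(p^*-p)}{p-1}>0$ is bounded below, finitely many iterations bring the exponent down to $\gamma_1$; a last step, performed once $\sigma$ drops below $\tfrac{\gamma_1(p-1)+p}{p^*-1}$ and carried out against $A|x|^{-\gamma_1}$ corrected by a lower-order term (chosen so the barrier is a positive supersolution lying below $A|x|^{-\gamma_1}$ near $0$), removes the residual loss and gives \eqref{eq:origin growth}. The exterior version of the argument gives \eqref{eq:infinity decay}.

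\emph{Main difficulty.} Two points require care. First, $-\Delta_p$ is nonlinear, so barriers cannot be assembled by superposition: every super/subsolution statement --- in particular the sign of $-L_p$ applied to the corrected barrier used in the final step --- needs a genuine nonlinear computation, and the comparison principle itself, in the presence of the singular inverse-$p$th-power weight, must be established with some care using $\mu<\bar\mu$ (for instance via a Picone inequality for the $p$-Laplacian). Second, reaching the \emph{sharp} exponents $\gamma_1,\gamma_2$ --- rather than $\gamma_1-\varepsilon,\gamma_2+\varepsilon$ --- is not automatic: the pure-power iteration stalls exactly at the endpoints, and it is the interplay between the exact solution $|x|^{-\gamma_i}$ and a lower-order correction, made possible by the strict inequalities around the borderline exponent $(N-p)/p$ secured in Step 1, that closes the estimate.
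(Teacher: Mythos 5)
Your overall strategy---a Moser-type crude bound to land the exponent strictly on the correct side of $(N-p)/p$, then a barrier-comparison step to reach the sharp exponents $\gamma_1,\gamma_2$---is exactly the strategy the paper follows (Proposition \ref{thm:poor estimate}, Proposition \ref{prop:computation of sub-super solution}, Theorems \ref{thm: Comparison} and \ref{thm: Comparison at infinity}). Step 1 of your outline matches the paper's Proposition \ref{thm:poor estimate} essentially verbatim. The difficulty is that Step 2 is not actually carried out, and the two gaps you flag at the end are precisely where the paper's real content sits.

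First, your intermediate barrier $\bar u = A|x|^{-\gamma_1} + B|x|^{-\sigma_1}$ is built by superposition, and for $p\ne 2$ there is no reason $-L_p\bar u$ should dominate $-L_p(A|x|^{-\gamma_1}) + (-L_p)(B|x|^{-\sigma_1})$; the cross terms in $|\nabla\bar u|^{p-2}\nabla\bar u$ have an undetermined sign. Without a genuine nonlinear computation showing $\bar u$ is a supersolution, the iteration does not get off the ground. The paper avoids this entirely by using a \emph{multiplicative} correction, $v(x)=|x|^{-\gamma_1}(1-\delta|x|^{\epsilon})$, whose image under $-L_p$ factors cleanly ($-L_pv = g\,|v|^{p-2}v$ with an explicitly computable $g$), and Proposition \ref{prop:computation of sub-super solution} verifies the supersolution property and the required lower bound $g\ge A|x|^{-\alpha}$ with $\alpha<p$ (respectively $\alpha>p$ near infinity) by hand. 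Because the crude bound already puts the forcing exponent $\alpha=\sigma_0(p^*-p)$ strictly below $p$, a \emph{single} application of the comparison principle then finishes: no iteration and no ``stalling at the endpoint'' occurs.

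Second, the comparison principle itself is not something you can simply invoke. You compare $-L_pu\le F$ against $-L_p\bar u\ge F$ as if $-L_p$ were a monotone operator, but the Hardy term $-\mu|x|^{-p}|w|^{p-2}w$ has the wrong sign for monotonicity, so weak comparison with a pure-forcing right-hand side fails in general. The paper instead works with the $(p-1)$-homogeneous form $-L_pw = f\,|w|^{p-2}w$ and proves Theorems \ref{thm: Comparison} and \ref{thm: Comparison at infinity} using Lindqvist's test functions $\eta_1=u^{1-p}\min((u^p-v^p)^+,m)$, $\eta_2=-v^{1-p}\min((u^p-v^p)^+,m)$ together with the pointwise inequality of Lemma \ref{lem:auxilury lemma}. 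Your ``Picone-type inequality'' remark points in the same direction, but it is a gesture, not a proof, and the exterior-domain version additionally needs the decay hypothesis \eqref{eq: special condition on v} to control the cut-off errors, which your sketch omits. In short: correct architecture, but the two ingredients that make it work---the multiplicative barrier computation and the Lindqvist-type comparison theorem---are missing, and those are exactly what the paper supplies.
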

In the above theorem, the positive constants $C,R_0,R_1$ depend on the
solution $u$. Indeed, this is the case, since equation \eqref{eq:object} when $Q\equiv 1$
 is invariant under the scaling $v(x)=\lambda^{\frac {N-p}p}u(\lambda x)$, $\lambda >0$.
 In above theorem and in the following, if we say a constant depends on the solution $u$,
  it means that the constant depends on $||u||_{L^{p^{*}}(\mathbb{R}^{N})}$,
  the $L^{p^*}-$norm of $u$, and also on the modulus of continuity of the function
   $h(\rho)=||u||_{L^{p^{*}}(B_{\rho}(0))}+||u||_{L^{p^{*}}(\mathbb{R}^{N}\backslash
B_{1/\rho}(0))}$ at zero. Precisely, we will choose a
   constant $\epsilon_0>0$ depending on $N,p,\mu,||Q||_{\infty}$. Since  $h(\rho)\to 0$
   as $\rho\to 0$,  there exists $\rho_0 >0$ such that \[
||u||_{L^{p^{*}}(B_{\rho_0}(0))}+||u||_{L^{p^{*}}(\mathbb{R}^{N}\backslash
B_{1/\rho_0}(0))}<\epsilon_0.
\] The constants $C,R_0,R_1$ in Theorem \ref{thm:main result} depend  on $\rho_0$.

We remark that our result is new even in the case $p\ne 2,\mu=0$. We improve
the estimate \eqref{eq: C-P-Y p not equal to 2} by Cao, Peng and Yan \cite{Cao4}.

The following theorem shows that the exponents $\gamma_1$ and $\gamma_2$ in the
 estimates \eqref{eq:origin growth} and (\ref{eq:infinity decay}) respectively in Theorem \ref{thm:main result} are sharp.
\begin{theorem}\label{thm: inverse of main resuls}
Let  $Q\in
L^{\infty}(\mathbb{R}^{N})$ be a nonnegative function and $u\in\mathcal{D}^{1,p}(\mathbb{R}^{N})$   a nonnegative  weak solution of equation (\ref{eq:object}). Then
\begin{equation}
u(x)\ge
m|x|^{-\gamma_{1}}\quad\text{for}\;|x|<1,\label{eq:origin
growth 1}
\end{equation}
and
\begin{equation}
u(x)\ge
m|x|^{-\gamma_{2}}\quad\text{for}\;|x|>1,\label{eq:infinity
decay 2}
\end{equation}
where $m=\inf_{\partial B_{1}(0)}u$.
\end{theorem}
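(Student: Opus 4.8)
The plan is to derive both inequalities from a comparison principle for the Hardy--$p$-Laplacian $L_{p}$, using the explicit power functions $|x|^{-\gamma_{1}}$ and $|x|^{-\gamma_{2}}$ as barriers. First I observe that since $Q\geq 0$ and $u\geq 0$, the right-hand side of \eqref{eq:object} is nonnegative, so $u$ is a nonnegative weak supersolution of the homogeneous equation $-L_{p}w=0$ both in the ball $B_{1}(0)$ and in the exterior domain $\mathbb{R}^{N}\setminus\overline{B_{1}(0)}$. By the regularity theory for the $p$-Laplacian, $u$ has a representative in $C^{1}_{\rm loc}(\mathbb{R}^{N}\setminus\{0\})$, so $m=\inf_{\partial B_{1}(0)}u\geq 0$ is well defined; if $m=0$ both estimates are trivial since $u\geq 0$, so I assume $m>0$.

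Next I record that for $\gamma\in\{\gamma_{1},\gamma_{2}\}$ the function $v_{\gamma}(x)=|x|^{-\gamma}$ satisfies, away from the origin,
\[
-L_{p}v_{\gamma}=\big(\gamma^{p-1}(N-p-(p-1)\gamma)-\mu\big)\,|x|^{-(p-1)\gamma-p},
\]
and the bracket vanishes exactly because $\gamma$ is a root of \eqref{eq: gamma 1 and gamma 2}. Since $\gamma_{1}<(N-p)/p$, one has $v_{\gamma_{1}}\in W^{1,p}_{\rm loc}(B_{1}(0))$ and $|x|^{-(p-1)\gamma_{1}-p}\in L^{1}_{\rm loc}(\mathbb{R}^{N})$; a truncated integration by parts near the origin (the boundary terms on $\partial B_{\e}(0)$ being $O(\e^{N-1-(p-1)(\gamma_{1}+1)})\to 0$) then shows that $v_{\gamma_{1}}$ is in fact a weak solution of $-L_{p}v_{\gamma_{1}}=0$ in all of $B_{1}(0)$. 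Since $\gamma_{2}>(N-p)/p$, one has $v_{\gamma_{2}}\in\mathcal{D}^{1,p}(\mathbb{R}^{N}\setminus\overline{B_{1}(0)})$, and $v_{\gamma_{2}}$ is a weak solution of $-L_{p}v_{\gamma_{2}}=0$ on $\mathbb{R}^{N}\setminus\overline{B_{1}(0)}$.

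I would then apply the comparison principle. For \eqref{eq:origin growth 1}, take $\Omega=B_{1}(0)$ and $\underline{u}=m\,v_{\gamma_{1}}$: it solves $-L_{p}\underline{u}=0$, $u$ is a supersolution, and $\underline{u}=m\leq u$ on $\partial B_{1}(0)$, so $(\underline{u}-u)^{+}\in\mathcal{D}^{1,p}_{0}(B_{1}(0))$; the comparison principle gives $u\geq\underline{u}=m|x|^{-\gamma_{1}}$ in $B_{1}(0)$. For \eqref{eq:infinity decay 2}, take $\Omega=\mathbb{R}^{N}\setminus\overline{B_{1}(0)}$ and $\underline{u}=m\,v_{\gamma_{2}}$: again $-L_{p}\underline{u}=0$ and $\underline{u}=m\leq u$ on $\partial B_{1}(0)$; since $u$ and $v_{\gamma_{2}}$ both lie in $\mathcal{D}^{1,p}$ of this exterior domain and both vanish at infinity, $(\underline{u}-u)^{+}$ vanishes on $\partial B_{1}(0)$ and is an admissible test function, and the comparison principle yields $u\geq m|x|^{-\gamma_{2}}$ in $\mathbb{R}^{N}\setminus\overline{B_{1}(0)}$.

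The heart of the matter is the comparison principle for $L_{p}$ on these domains. Testing the (in)equalities for $\underline{u}$ and $u$ against $\phi=(\underline{u}-u)^{+}$, subtracting, and using the monotonicity inequality $(|a|^{p-2}a-|b|^{p-2}b)\cdot(a-b)\geq 0$, one is reduced to showing that
\[
\int_{\{\underline{u}>u\}}\big(|\nabla\underline{u}|^{p-2}\nabla\underline{u}-|\nabla u|^{p-2}\nabla u\big)\cdot\nabla\phi\;\leq\;\mu\int_{\{\underline{u}>u\}}\frac{(\underline{u}^{p-1}-u^{p-1})\,\phi}{|x|^{p}}
\]
(the right side being nonnegative since $\underline{u}>u\geq 0$) forces $\phi\equiv 0$. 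When $p=2$ this is immediate: the left side is $\int|\nabla\phi|^{2}$, the right side is $\mu\int\phi^{2}/|x|^{2}\leq(\mu/\bar\mu)\int|\nabla\phi|^{2}$ by Hardy's inequality (which holds with the same constant on the exterior domain), and $\mu<\bar\mu$ forces $\phi\equiv 0$. For $p\neq 2$ this absorption of the Hardy term into the strictly monotone left-hand side is the genuinely delicate step and is where the real work lies: one needs weighted Hardy inequalities adapted to the weight $|x|^{-(p-1)(\gamma_{1}+1)}$ arising from $|\nabla v_{\gamma_{1}}|^{p-2}$, respectively to the decay at infinity, whose constants come out favorably precisely because $\gamma_{i}$ solves \eqref{eq: gamma 1 and gamma 2}; equivalently one establishes or invokes a comparison principle for $L_{p}$ valid for $0\le\mu<\bar\mu$. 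I expect this Hardy-absorption step, rather than the construction of the barriers, to be the principal obstacle.
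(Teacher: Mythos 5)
Your reduction of the theorem to a comparison principle, with $m|x|^{-\gamma_1}$ on $B_1$ and $m|x|^{-\gamma_2}$ on $B_1^c$ as barriers and with $u$ viewed as a nonnegative supersolution of the homogeneous equation $-L_pw=0$, is exactly what the paper does; your check that $\gamma_1<(N-p)/p<\gamma_2$ puts the barriers in the right Sobolev classes and that $|x|^{-\gamma_i}$ is a weak solution of $-L_pw=0$ across the origin is also correct. The gap is in the comparison principle itself, and you have in fact located it yourself: testing the two relations against $\phi=(\underline u-u)^+$ and subtracting leaves you needing to dominate $\mu\int(\underline u^{p-1}-u^{p-1})\phi/|x|^p$ by the monotonicity term
\[
\int\bigl(|\nabla\underline u|^{p-2}\nabla\underline u-|\nabla u|^{p-2}\nabla u\bigr)\cdot\nabla\phi,
\]
and for $p\ne 2$ this last quantity is not $\int|\nabla\phi|^p$ (nor does the Hardy integrand reduce to $\phi^p/|x|^p$), so the plain Hardy inequality does not close the argument. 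You flag this as the principal obstacle but do not resolve it, and there is no known weighted Hardy inequality that makes this absorption go through for general $1<p<N$ and $0\le\mu<\bar\mu$ along these lines.

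What the paper does instead is avoid the absorption entirely by choosing Lindqvist-type test functions. With $u$ a subsolution and $v$ a supersolution (and $\inf v>0$), one tests against
\[
\eta_1=u^{1-p}\min\bigl((u^p-v^p)^+,m\bigr),\qquad
\eta_2=-\,v^{1-p}\min\bigl((u^p-v^p)^+,m\bigr),
\]
and adds the two inequalities. The crucial algebraic point is that on the set $\{0\le u^p-v^p\le m\}$ the Hardy contributions are
\[
-\mu\,\frac{u^{p-1}}{|x|^p}\,\eta_1=-\mu\,\frac{u^p-v^p}{|x|^p}
\quad\text{and}\quad
-\mu\,\frac{v^{p-1}}{|x|^p}\,\eta_2=-\mu\,\frac{v^p-u^p}{|x|^p},
\]
which cancel exactly upon adding; on $\{u^p-v^p\ge m\}$ the same cancellation occurs with the factor $m$. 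No Hardy inequality is invoked at all. What remains are the principal terms $|\nabla u|^{p-2}\nabla u\cdot\nabla(u-v^p u^{1-p})+|\nabla v|^{p-2}\nabla v\cdot\nabla(v-u^p v^{1-p})$, which are controlled from below by $C_p(u^p+v^p)|\nabla\log u-\nabla\log v|^p$ (for $p\ge 2$; a companion estimate for $1<p<2$) via Lindqvist's elementary inequality --- this is Lemma 3.1 in the paper. Letting $m\to\infty$ then forces $u\le v$ on $\{u\ge v\}$. On the exterior domain the paper runs the same argument with a cutoff at scale $R$, which produces boundary terms requiring the additional hypothesis \eqref{eq: special condition on v}; this hypothesis is then verified in the proof of Theorem \ref{thm:given growth comparison at infinity 2} using the standard gradient bound $\int_{B_{2R}\setminus B_R}|\nabla\log u|^p\le CR^{N-p}$ for positive $p$-superharmonic functions. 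So the barrier construction in your proposal matches the paper, but the engine that makes it work --- the choice of $u^{1-p}(u^p-v^p)$ and $v^{1-p}(v^p-u^p)$ as test functions so that the Hardy potential drops out identically --- is the idea your proposal is missing.
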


In fact, Theorem \ref{thm:main result} and Theorem \ref{thm: inverse of main resuls} hold for solutions of more general equations. Consider the equation
\begin{eqnarray} &-L_{p}u= f|u|^{p-2}u  &
\label{eq: general equation}
\end{eqnarray}
in an arbitrary domain $\Omega\subset\mathbb{R}^N$, where $f$ is a
function in the space $L^{\frac Np}(\Omega)$. Equation (\ref{eq: general equation}) is the
 Euler-Lagrange equation of the energy functional $E:\mathcal{D}^{1,p}(\Omega)\to \R$ defined by
\[
E(u)=\frac{1}{p}\int_{\Omega}\left(|\nabla
u|^{p}-\mu\frac{|u|^{p}}{|x|^{p}}\right)-\frac{1}{p}\int_{\Omega}f|u|^{p},\quad
u\in \mathcal{D}^{1,p}(\Omega),
\]
where
$\mathcal{D}^{1,p}(\Omega)$ is   the function
space defined as
$$\mathcal{D}^{1,p}(\Omega)=\left\{v\in L^{p^*}(\Omega):v\:\text{is weakly
differentiable and}\: \nabla v\in L^p(\Omega) \right\},$$ equipped
with the seminorm $||v||_{\mathcal{D}^{1,p}(\Omega) }=||\nabla
v||_{L^{p}(\Omega)}$.

A function $u\in \mathcal{D}^{1,p}(\Omega)$
is  a weak subsolution
 of equation (\ref{eq: general equation}) in
$\Omega$ if
\begin{equation*}
\int_{\Omega}\left(|\nabla u|^{p-2}\nabla
u\cdot\nabla\varphi-\frac{\mu}{|x|^{p}}|u|^{p-2}u\varphi\right)\leq
\int_{\Omega}f|u|^{p-2}u\varphi\label{eq: definition of weak
solutions}
\end{equation*}
 for all nonnegative function $\varphi\in C^{\infty}_0(\Omega)$. A function $u$ is  a weak
supersolution
 of equation (\ref{eq: general equation}) in
$\Omega$ if $-u$ is  a weak subsolution. A function $u$ is  a weak
solution
 of equation (\ref{eq: general equation}) in
$\Omega$ if $u$ is  both a weak subsolution
 and a weak supersolution.

The following theorem gives  the asymptotic behavior of solutions
to equation (\ref{eq: general equation}) at the origin.

\begin{theorem}
\label{thm:given growth comparison at origin 1} Let $\Omega$ be a
bounded  domain containing the origin and  $f$  a  function in $ L^{\frac
Np}(\Omega)$ satisfying
$f(x)\leq A|x|^{-\alpha}$ in $\Omega$ for some
given constants $A,\alpha$, $A>0$ and  $p>\alpha$. If $u\in
\mathcal{D}^{1,p}(\Omega)$ is a weak subsolution to equation
(\ref{eq: general equation}) in $\Omega$, then
there exists a positive
constant $C$  depending  on $N,p,\mu,A$ and $\alpha$ such that
\begin{eqnarray*} u(x)\leq C M |x|^{-\gamma_{1}}&&
\hbox{for}\:x\in B_{ R_0}(0),
\end{eqnarray*}
where $M=\sup_{\partial B_{R_0}(0)}u^+$ and $R_0>0$ is a constant depending  on $N,p,\mu,A,\alpha$.
\end{theorem}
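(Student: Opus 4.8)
The plan is to prove the bound by comparing $u^{+}$ with an explicit radial supersolution whose leading singularity at the origin is exactly $|x|^{-\gamma_{1}}$. If $\mu=0$ then $\gamma_{1}=0$ and the assertion is merely local boundedness near the origin, which is classical for $f\in L^{N/p}(\Omega)$; so I assume $\mu>0$, whence $0<\gamma_{1}<(N-p)/p$ and $\gamma_{1}$ is a simple root of \eqref{eq: gamma 1 and gamma 2}. I would first record two reductions. Testing the subsolution inequality against functions supported in $\{u>0\}$ and applying the standard truncation lemma for the $p$-Laplacian, $v:=u^{+}\in\mathcal{D}^{1,p}(\Omega)$ is a weak subsolution of $-L_{p}v=f\,v^{p-1}$; since $u\in\mathcal{D}^{1,p}$ and $p<N$, the origin has zero $p$-capacity, so the point singularity is removable and $v$ is in fact a weak subsolution of this equation in all of $B_{R_{0}}(0)$ once $R_{0}$ is fixed. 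Away from the origin $\mu|x|^{-p}$ is bounded and $f\in L^{N/p}$, so De Giorgi--Nash--Moser estimates (combined with a Moser iteration on small balls, where $\|f\|_{L^{N/p}}$ is small) give $u\in L^{\infty}_{\mathrm{loc}}(\Omega\setminus\{0\})$; in particular $M=\sup_{\partial B_{R_{0}}(0)}u^{+}<\infty$.

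For the barrier, fix $\de\in(0,p-\a)$, to be constrained further below, and set
\[
w(x)=|x|^{-\gamma_{1}}-|x|^{-\gamma_{1}+\de}=|x|^{-\gamma_{1}}\big(1-|x|^{\de}\big),
\]
which for $R_{0}<1$ is positive on $B_{R_{0}}(0)\setminus\{0\}$, satisfies $0<w\le|x|^{-\gamma_{1}}$, and belongs to $\mathcal{D}^{1,p}(B_{R_{0}}(0))$ because $\gamma_{1}<(N-p)/p=N/p^{*}$. Writing $r=|x|$ and using the radial identity $\Delta_{p}(h(r))=r^{1-N}\big(r^{N-1}|h'|^{p-2}h'\big)'$ together with the relation $\gamma_{1}^{p-1}\big(N-p-(p-1)\gamma_{1}\big)=\mu$ (which is \eqref{eq: gamma 1 and gamma 2}), one computes that the leading term of $-L_{p}w$ as $r\to0$ is $\big(\mu(p-1)-C_{\de}\big)\,r^{-p-(p-1)\gamma_{1}+\de}$, where
\[
C_{\de}=(p-1)\gamma_{1}^{p-2}\big(N-p-(p-1)\gamma_{1}+\de\big)(\gamma_{1}-\de).
\]
Since $C_{0}=(p-1)\mu$ and $\tfrac{d}{d\de}C_{\de}\big|_{\de=0}=(p-1)\gamma_{1}^{p-2}\big(p\gamma_{1}-(N-p)\big)<0$ — here the strict inequality $\gamma_{1}<(N-p)/p$ is decisive — the coefficient $\mu(p-1)-C_{\de}$ is strictly positive for all sufficiently small $\de>0$; fix such a $\de$. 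Then, for $R_{0}$ small, $-L_{p}w\ge c_{\de}\,r^{-p-(p-1)\gamma_{1}+\de}$ on $B_{R_{0}}(0)\setminus\{0\}$ with $c_{\de}>0$, while $A|x|^{-\a}w^{p-1}\le A\,r^{-\a-(p-1)\gamma_{1}}$; hence the inequality $-L_{p}w\ge A|x|^{-\a}w^{p-1}$ reduces to $c_{\de}\,r^{\a-p+\de}\ge A$, which holds for $r<R_{0}$ since $\a-p+\de<0$, with $R_{0}=R_{0}(N,p,\mu,A,\a)$. Consequently $w$ — and hence $CMw$ for any $C>0$, by the $(p-1)$-homogeneity of $-L_{p}$ — is a positive weak supersolution of $-\Delta_{p}y=\big(\mu|x|^{-p}+A|x|^{-\a}\big)y^{p-1}$ in $B_{R_{0}}(0)\setminus\{0\}$ and, by removability of the origin, in $B_{R_{0}}(0)$. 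Since $f\le A|x|^{-\a}$, the nonnegative function $v$ is a subsolution of the same equation.

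It remains to run the comparison. Choose $C=C(N,p,\mu,A,\a)$ so large that $CM\,w\ge M$ on $\partial B_{R_{0}}(0)$, so that $v\le M\le CM\,w$ there. The energy
\[
\varphi\longmapsto\int_{B_{R_{0}}(0)}|\nabla\varphi|^{p}-\int_{B_{R_{0}}(0)}\big(\mu|x|^{-p}+A|x|^{-\a}\big)|\varphi|^{p}
\]
is coercive on $\mathcal{D}^{1,p}_{0}(B_{R_{0}}(0))$ (the completion of $C^{\infty}_{0}(B_{R_{0}}(0))$ in the $\mathcal{D}^{1,p}$-seminorm) once $R_{0}$ is small: by the Hardy inequality $\mu\int|x|^{-p}|\varphi|^{p}\le(\mu/\bar\mu)\int|\nabla\varphi|^{p}$ with a positive gap because $\mu<\bar\mu$, and by H\"older and Sobolev $A\int_{B_{R_{0}}(0)}|x|^{-\a}|\varphi|^{p}\le A\,\big\||x|^{-\a}\big\|_{L^{N/p}(B_{R_{0}}(0))}C_{S}^{p}\int|\nabla\varphi|^{p}$ with $\big\||x|^{-\a}\big\|_{L^{N/p}(B_{R_{0}}(0))}=c\,R_{0}^{p-\a}\to0$ — it is here that $\a<p$ enters. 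For such $R_{0}$ the operator $-\Delta_{p}-\big(\mu|x|^{-p}+A|x|^{-\a}\big)$ has positive principal eigenvalue on $B_{R_{0}}(0)$, so the weak comparison principle is available and $v\le CM\,w$ on $\partial B_{R_{0}}(0)$ forces $v\le CM\,w$ throughout $B_{R_{0}}(0)$. Since $u\le v\le CM\,w\le CM\,|x|^{-\gamma_{1}}$ in $B_{R_{0}}(0)$, this is the claim.

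The main obstacle is the last step: converting the distributional sub/supersolution relations into the pointwise bound $v\le CM\,w$ in the presence of the inverse-$p$-th-power Hardy term, for which the condition $\mu<\bar\mu$ is exactly what yields coercivity of the relevant energy on small balls, and $p>\a$ is exactly what makes $f$ a genuine lower-order perturbation of the Hardy potential near the origin; neither hypothesis can be dropped. A secondary, purely technical point is that in the radial computation of $-L_{p}w$ one must keep track of the sign of $w'$ (so as to write $|w'|^{p-2}w'=-|w'|^{p-1}$ near the origin) and verify that the Taylor remainders of $(1-c\,r^{\de})^{p-1}$ are indeed of strictly higher order, uniformly over the range $1<p<N$.
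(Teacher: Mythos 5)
The barrier $w(x)=|x|^{-\gamma_1}(1-|x|^\delta)$ you build is essentially the supersolution $v(x)=|x|^{-\gamma_1}(1-\delta|x|^\epsilon)$ constructed in the paper's Proposition \ref{prop:computation of sub-super solution}, and your sign analysis of the leading coefficient (that $\mu(p-1)-C_\delta>0$ for small $\delta$, because $\gamma_1<(N-p)/p$) mirrors the paper's verification that $h'(0)>0$ when $\gamma=\gamma_1$, $\epsilon>0$. The substantive divergence, and the genuine gap, is the final comparison step. You assert that because the energy $\varphi\mapsto\int|\nabla\varphi|^p-\int(\mu|x|^{-p}+A|x|^{-\alpha})|\varphi|^p$ is coercive on $\mathcal{D}^{1,p}_0(B_{R_0})$ once $R_0$ is small, ``the weak comparison principle is available.'' For the $p$-Laplacian with a zero-order term $c(x)|y|^{p-2}y$ and $p\neq 2$, a comparison principle between a subsolution and a positive supersolution is not an automatic consequence of coercivity or of a positive principal eigenvalue — it is a substantive theorem in its own right, and it is precisely the content of the paper's Theorem \ref{thm: Comparison}, whose proof occupies Section 3 and relies on the Lindqvist-type test functions $\eta_1=u^{1-p}\min\left((u^p-v^p)^+,m\right)$ and $\eta_2=-v^{1-p}\min\left((u^p-v^p)^+,m\right)$ together with the pointwise convexity inequalities of Lemma \ref{lem:auxilury lemma}. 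Your proposal states this hard step as a known fact, without proof or a citation precise enough to cover the present setting (Hardy-type weight, $\mathcal{D}^{1,p}$ rather than $W^{1,p}$, no boundedness hypotheses on the sub/supersolution). In effect, the difficult half of the theorem has been waved past.

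It is also worth noting that your coercivity/spectral-gap step is a red herring relative to what the paper actually does: Theorem \ref{thm: Comparison} requires only $f\le g$, $\inf_\Omega v>0$, and $u\le v$ on $\partial\Omega$; no coercivity is assumed. The hypothesis $p>\alpha$ is already fully used, as the paper uses it, in ensuring that $w$ is a genuine supersolution on a sufficiently small ball (your inequality $c_\delta r^{\alpha-p+\delta}\ge A$). The comparison step itself does not need the spectral gap. If you replace the unproven ``comparison is available'' by a proved Lindqvist-type comparison principle, and — as the paper does — compare $u$ with $\tilde{v}=C(M+k)w$ and let $k\downarrow 0$ so that the needed condition $\inf\tilde{v}>0$ holds, then your argument would close. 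As written, it does not.
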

We also have the following theorem which shows that the exponent $\gamma_1$ in Theorem
\ref{thm:given growth comparison at origin 1}
is optimal.
\begin{theorem}
\label{thm:given growth comparison at origin 2} Let $\Omega$ be a
bounded  domain containing the origin and  $f\in L^{\frac
Np}(\Omega)$  a nonnegative  function. If   $ u\in
\mathcal{D}^{1,p}(\Omega)$ is  a nonnegative weak supersolution of
equation (\ref{eq: general equation}) in $\Omega$, then
\begin{eqnarray*}u(x)\ge  Cm
{|x|}^{-\gamma_1}& & \hbox{for}\:x\in\Omega,\end{eqnarray*} where
$m=\inf_{\Omega}u$ and $C=\inf_{\partial\Omega}|x|^{\gamma_1}$.
\end{theorem}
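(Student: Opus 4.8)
The plan is to compare $u$ from below with the explicit radial barrier $w(x):=C\,m\,|x|^{-\gamma_1}$, where $m=\inf_\Omega u$ and $C=\inf_{\partial\Omega}|x|^{\gamma_1}$, exploiting that $|x|^{-\gamma_1}$ is an \emph{exact} positive solution of the pure Hardy equation $-\Delta_p v-\tfrac{\mu}{|x|^p}|v|^{p-2}v=0$ in $\R^N\setminus\{0\}$. Indeed, substituting $v=|x|^{-\gamma}$ into $-L_p v$ produces, up to the positive factor $|x|^{-\gamma(p-1)-p}$, the number $(N-p)\gamma^{p-1}-(p-1)\gamma^{p}-\mu$, which vanishes exactly when $\gamma$ is a root of \eqref{eq: gamma 1 and gamma 2}, in particular for $\gamma=\gamma_1$. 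Since $0\le\gamma_1<\tfrac{N-p}{p}\le\tfrac{N-p}{p-1}$, one checks that $w\in\mathcal D^{1,p}(\Omega)$ and $\tfrac{\mu}{|x|^p}w^{p-1}\in L^1_{\mathrm{loc}}$, and --- because the flux $\int_{\partial B_\rho}|\nabla w|^{p-2}\partial_\nu w\,\varphi$ is $O\big(\rho^{\,N-1-(\gamma_1+1)(p-1)}\big)\to0$ --- that $w$ is a weak solution of $-L_p v=0$ in all of $\Omega$, with no concentration at the origin. As $f\ge0$ and $w\ge0$, it follows that $w$ is a nonnegative weak subsolution of \eqref{eq: general equation} in $\Omega$.

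If $m=0$ the assertion is trivial ($u\ge0=w$), so assume $m>0$. The role of the constant $C$ is precisely the boundary ordering: for $x\in\partial\Omega$ one has $w(x)=m\big(\inf_{\partial\Omega}|y|^{\gamma_1}\big)|x|^{-\gamma_1}\le m\le u(x)$, so $(w-u)^{+}$ vanishes on $\partial\Omega$. Hence the theorem reduces to a \emph{comparison principle} (the tool underlying all four theorems): if $v$ is a nonnegative weak subsolution and $u$ a nonnegative weak supersolution of \eqref{eq: general equation} with $(v-u)^{+}\in W_0^{1,p}(\Omega)$, then $v\le u$ in $\Omega$; applying this with $v=w$ gives $u(x)\ge C\,m\,|x|^{-\gamma_1}$ in $\Omega$, and $\gamma_1$ is sharp because the barrier realizes it.

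Establishing this comparison principle is the real content, and here the naive energy method \emph{does not work}: testing the difference of the two inequalities with $(v-u)^{+}$ makes the zeroth--order contribution $\int_{\{v>u\}}\big(\tfrac{\mu}{|x|^p}+f\big)\big(v^{p-1}-u^{p-1}\big)(v-u)^{+}$ enter with the \emph{same} sign as the nonnegative monotonicity term, so no contradiction ensues. I would instead run a Picone / D\'iaz--Sa\'a argument on $D=\{v>u\}$. By the Picone (Allegretto--Huang) identity,
\[
\int_{D}\Big(\tfrac{-\Delta_p u}{u^{p-1}}-\tfrac{-\Delta_p v}{v^{p-1}}\Big)\big(u^{p}-v^{p}\big)
=\int_{D}R(v,u)+\int_{D}R(u,v)\ \ge\ 0 ,\qquad
R(a,b):=|\nabla b|^{p}-\nabla\!\Big(\tfrac{b^{p}}{a^{p-1}}\Big)\!\cdot|\nabla a|^{p-2}\nabla a\ \ge\ 0 ,
\]
while testing the supersolution inequality for $u$ against the (nonnegative, $W_0^{1,p}(\Omega)$) function $\big(\tfrac{v^{p}-u^{p}}{u^{p-1}}\big)^{+}$ and the subsolution inequality for $v$ against $\big(\tfrac{v^{p}-u^{p}}{v^{p-1}}\big)^{+}$ --- admissible because they vanish where $v\le u$, in particular on $\partial\Omega$ --- shows that the same quantity is $\le\int_{D}\big(\tfrac{\mu}{|x|^p}+f\big)(u^{p}-v^{p})-\int_{D}\big(\tfrac{\mu}{|x|^p}+f\big)(u^{p}-v^{p})=0$, the two (identical) contributions cancelling. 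Therefore $R(v,u)=R(u,v)=0$ a.e.\ on $D$, which by the equality case of Picone forces $v/u$ to be constant on each connected component of $D$; since $v=u$ on the part of $\partial D$ interior to $\Omega$ and $v\le u$ on $\partial\Omega$, that constant must be $\le1$, so $D=\varnothing$.

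The main obstacle is to make this rigorous given the singularity of $w$ at the origin and the possibility that $D$ reaches $\partial\Omega$. I would handle it by: (i) replacing $w$ by the bounded subsolution $w_\rho(x):=C\,m\,(\max\{|x|,\rho\})^{-\gamma_1}$, $\rho>0$ --- still a weak subsolution of \eqref{eq: general equation} (a \emph{strict} one inside $B_\rho$, which also disposes of the degenerate case $v\equiv\lambda u$) and still $\le u$ on $\partial\Omega$ --- so that all the Picone test functions are bounded; (ii) using interior regularity so that $u$ is continuous (lower semicontinuous at the origin) and $\ge m>0$ in $\Omega$, which legitimizes the rigidity step above; and (iii) carrying out the argument on subdomains $\Omega'\Subset\Omega$ containing $0$, with $C,m$ replaced by $\inf_{\partial\Omega'}|x|^{\gamma_1}$ and $\inf_{\Omega'}u$, and then letting $\rho\to0$ and $\Omega'\uparrow\Omega$, using $\inf_{\partial\Omega'}|x|^{\gamma_1}\to\inf_{\partial\Omega}|x|^{\gamma_1}$ and $\inf_{\Omega'}u\to\inf_\Omega u$ to recover the stated sharp constants. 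Everything else is routine bookkeeping.
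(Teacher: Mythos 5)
Your overall strategy coincides with the paper's: construct the barrier $w(x)=Cm|x|^{-\gamma_1}$, which solves $-L_pv=0$ and hence is a subsolution of \eqref{eq: general equation} because $f\ge0$, check $w\le u$ on $\partial\Omega$, dispose of $m=0$, and invoke a comparison principle proved by a Picone--Lindqvist argument. Indeed, the auxiliary Lemma~\ref{lem:auxilury lemma} is exactly the Picone-type inequality you quote, and the test functions $u^{1-p}\min((u^p-v^p)^+,m)$, $-v^{1-p}\min((u^p-v^p)^+,m)$ used in Theorem~\ref{thm: Comparison} are the same Lindqvist test functions you propose, truncated.

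There is, however, a genuine gap in your regularization step (i). The function $w_\rho(x)=Cm(\max\{|x|,\rho\})^{-\gamma_1}=\min\bigl(w,\,Cm\rho^{-\gamma_1}\bigr)$ is \emph{not} a weak subsolution of \eqref{eq: general equation}. Inside $B_\rho$ it is constant and outside it equals $w$, so the radial derivative jumps from $0$ (at $|x|=\rho^-$) to the strictly negative value $-\gamma_1 Cm\rho^{-\gamma_1-1}$ (at $|x|=\rho^+$); this concave kink produces a positive surface measure in $-\Delta_p w_\rho$. Concretely, for $\varphi\ge0$, $\varphi\in C_0^\infty(\Omega)$, integrating by parts on $\{|x|>\rho\}$ gives
\[
\int_\Omega |\nabla w_\rho|^{p-2}\nabla w_\rho\cdot\nabla\varphi
=\int_{\{|x|>\rho\}}\frac{\mu}{|x|^p}w^{p-1}\varphi
+\bigl(\gamma_1 Cm\rho^{-\gamma_1-1}\bigr)^{p-1}\int_{\{|x|=\rho\}}\varphi\,dS,
\]
and the extra nonnegative surface integral is not controlled by $\int_\Omega\bigl(\tfrac{\mu}{|x|^p}+f\bigr)w_\rho^{p-1}\varphi$ once $\varphi$ concentrates on a thin shell around $\{|x|=\rho\}$ (there the surface term stays bounded below while the volume terms vanish by absolute continuity, since $f\in L^{N/p}$). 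In general, taking the \emph{minimum} of a solution with a constant creates a supersolution-type kink at the contact surface; it is the \emph{maximum} of two subsolutions that remains a subsolution. Consequently your comparison step cannot be applied to $u$ and $w_\rho$, and the whole $\rho\to0$ limit collapses.

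The fix --- and what the paper actually does in Theorem~\ref{thm: Comparison} --- is to keep the unbounded barrier $w$ and truncate the \emph{test functions} instead: use $w^{1-p}\min((w^p-u^p)^+,m)$ and $-u^{1-p}\min((w^p-u^p)^+,m)$, which are bounded because on the set $\{w>u\}$ one has $w>u\ge\inf_\Omega u>0$, and then let $m\to\infty$. The overflow from the level set $\{w^p\ge m\}$ is controlled by $(p-1)\int_{\{w^p\ge m\}}|\nabla w|^p\to0$. With that replacement for your step (i), the rest of your argument is in substance the paper's proof.
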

We also have the following corresponding results on the asymptotic behavior of weak
solutions of equation \eqref{eq: general equation} at infinity.

\begin{theorem}
\label{thm:given growth comparison at infinity 1}  Let $\Omega$ be
an exterior domain in $\mathbb{R}^N$ such that
$\Omega^c=\mathbb{R}^N\backslash \Omega$ is bounded and function
$f\in L^{\frac Np}(\Omega)$ satisfy
 $f(x)\leq A|x|^{-\alpha}$ in $\Omega$ for some
given constants $A,\alpha$,  $A>0$, $p<\alpha$. If $u\in
\mathcal{D}^{1,p}(\Omega)$ is a weak subsolution of equation
(\ref{eq: general equation}) in $\Omega$, then there exists
 a positive
constant $C$ depending only on $N,p,\mu,A$ and $\alpha$
such that
\begin{eqnarray*} u(x)\leq C M |x|^{-\gamma_{2}}&&
\hbox{for}\:x\in \R^N\backslash B_{ R_1}(0),
\end{eqnarray*}
where $M=\sup_{\partial B_{R_1}(0)}u^+$ and
 $R_1>1$ is a constant depending on
$N,p,\mu,A,\alpha$. \end{theorem}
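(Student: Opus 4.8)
The plan is to mirror the (interior) argument of Theorem \ref{thm:given growth comparison at origin 1} and transport it to infinity by a change of variables, then close the estimate with a comparison-principle argument against the explicit barrier $|x|^{-\gamma_2}$. First I would record the algebraic fact that $v_0(x)=|x|^{-\gamma_2}$ is (formally) a solution of the homogeneous operator $-\Delta_p v-\mu|x|^{-p}|v|^{p-2}v=0$ away from the origin: indeed, for a radial power $|x|^{-\gamma}$ one computes $-\Delta_p(|x|^{-\gamma})=\gamma^{p-1}\big[(p-1)\gamma-(N-p)\big]|x|^{-(\gamma+1)(p-1)-1}$, which combined with the Hardy term vanishes exactly when $\gamma$ solves \eqref{eq: gamma 1 and gamma 2}; $\gamma_2$ is the larger root, which is the relevant one at infinity since $\gamma_2>(N-p)/p$. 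This is the reason the exponent $\gamma_2$ (and not $\gamma_1$) governs the decay.

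Next I would perform the Kelvin-type substitution $y=x/|x|^2$, $w(y)=|y|^{-\gamma_2}\,u(y/|y|^2)$ — or, what is cleaner for the $p$-Laplacian, simply set $t=1/|x|$ and work with $w(x)=|x|^{\gamma_2}u(x)$ on the bounded punctured neighborhood of the origin that $\R^N\setminus B_{R_1}(0)$ becomes. Under this substitution the equation \eqref{eq: general equation} with weight $f(x)\le A|x|^{-\alpha}$, $\alpha>p$, is carried to an equation of the same structural type on a bounded domain containing the origin, with a transformed potential $\tilde f$ satisfying $\tilde f(y)\le \tilde A|y|^{-\tilde\alpha}$ where the new exponent $\tilde\alpha=2p-\alpha<p$, so that Theorem \ref{thm:given growth comparison at origin 1} (or its proof) applies to the transformed subsolution. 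Tracking the powers of $|x|$ through the Kelvin transform turns the conclusion $\tilde u(y)\le C\tilde M|y|^{-\gamma_1}$ near $y=0$ back into the desired $u(x)\le CM|x|^{-\gamma_2}$ for $|x|>R_1$. The constants $C$ and $R_1$ depend only on $N,p,\mu,A,\alpha$ because all transformed quantities do, and $M=\sup_{\partial B_{R_1}(0)}u^+$ is the image of the boundary supremum on the sphere $\partial B_{1/R_1}$.

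Alternatively — and this is probably the route the paper takes, since it avoids checking that the Kelvin transform respects the subsolution inequality for $p\ne2$ — I would run the argument directly: fix a large radius so that $A|x|^{-\alpha}\le \e_0|x|^{-p}$ on $\R^N\setminus B_{R_1}(0)$ for a smallness threshold $\e_0=\e_0(N,p,\mu)$ to be determined (possible since $\alpha>p$ forces $|x|^{p-\alpha}\to0$), absorb the $f|u|^{p-2}u$ term into a small multiple of the Hardy term, and then compare $u$ with the barrier $\phi(x)=CM|x|^{-\gamma_2}$. Because $\gamma_2$ solves \eqref{eq: gamma 1 and gamma 2}, $\phi$ is a supersolution of the perturbed operator $-\Delta_p\cdot-(\mu-\e_0)|x|^{-p}|\cdot|^{p-2}\cdot$ for an appropriate sign of the perturbation, $\phi\ge u^+$ on $\partial B_{R_1}(0)$ by choice of $C$, and $\phi\to0$ while $u\in L^{p^*}$ controls $u$ at infinity; the weak comparison principle for $L_p$ on the exterior domain (the same comparison principle invoked for Theorem \ref{thm:given growth comparison at origin 1}) then yields $u\le\phi$. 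The main obstacle I anticipate is exactly this comparison step on an \emph{unbounded} domain: one must justify using $\varphi=(u-\phi)^+$ as a test function despite its non-compact support, which requires a cutoff-and-pass-to-the-limit argument exploiting $u\in\mathcal D^{1,p}$ and the decay of $\phi$, together with the subcriticality (in the Hardy sense) of the effective potential $(\mu-\e_0)|x|^{-p}$ with $\mu-\e_0<\bar\mu$ to keep the associated quadratic-type form coercive. Handling the low-order $p$-Laplacian monotonicity inequality $(|a|^{p-2}a-|b|^{p-2}b)\cdot(a-b)\ge0$ on the set $\{u>\phi\}$ is routine once the test function is admissible.
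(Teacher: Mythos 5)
Both routes you sketch have a gap at the same point, and it is precisely the point the paper's Proposition \ref{prop:computation of sub-super solution} is designed to handle. The pure power $\phi(x)=CM|x|^{-\gamma_{2}}$ satisfies $-L_{p}\phi=0$ exactly, i.e.\ it is a supersolution of $-L_{p}v=g|v|^{p-2}v$ only for $g\le 0$. The comparison principle of Section~3 (Theorem \ref{thm: Comparison at infinity}) requires a supersolution of $-L_{p}v=g|v|^{p-2}v$ with $g\ge f$ pointwise, and since $f$ can take positive values this hypothesis simply fails for $\phi$. Your absorption trick --- fixing $R_{1}$ so that $A|x|^{-\alpha}\le\varepsilon_{0}|x|^{-p}$ and pushing $f$ into the Hardy potential --- does make $u^{+}$ a subsolution of the homogeneous equation for the shifted parameter $\mu+\varepsilon_{0}$, but the matching barrier is then $|x|^{-\gamma_{2}(\mu+\varepsilon_{0})}$, and because $\gamma_{2}$ is \emph{decreasing} in $\mu$ one has $\gamma_{2}(\mu+\varepsilon_{0})<\gamma_{2}(\mu)$. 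Letting $\varepsilon_{0}\to 0$ sends $R_{1}\to\infty$ and the constant to infinity, so this route only reproduces the non-sharp estimate of the type proved by Cao--Peng--Yan (decay $\gamma_{2}-\theta$ for any $\theta>0$), not the sharp exponent $\gamma_{2}$ that the theorem asserts. The paper overcomes exactly this obstruction by replacing $|x|^{-\gamma_{2}}$ with the corrected barrier $v(x)=|x|^{-\gamma_{2}}\bigl(1-\delta|x|^{-\varepsilon}\bigr)$: Proposition \ref{prop:computation of sub-super solution}(2) shows that $v$ solves $-L_{p}v=g|v|^{p-2}v$ with a \emph{positive} $g\ge A|x|^{-\alpha}\ge f$ on $B_{R_{1}}^{c}$, so the hypotheses of Theorem \ref{thm: Comparison at infinity} are genuinely met at the sharp exponent. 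This barrier construction is the missing idea in your proposal.

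Two further remarks on implementation. First, the Kelvin-type substitution $y=x/|x|^{2}$ does not respect $-\Delta_{p}$ for $p\ne 2$ (the operator is not conformally invariant except at $p=2$ and $p=N$), so, as you anticipated, that route cannot be pushed through for general $p$; the paper never uses it. Second, the comparison step itself is not the standard $(u-\phi)^{+}$ argument: the Hardy term enters with the ``wrong'' sign ($-\mu|x|^{-p}|w|^{p-2}w$), and on the set $\{u>v\}$ it produces a nonnegative term on the right that cannot be absorbed for $p\ne 2$. The paper instead uses Lindqvist-type test functions $\eta\,u^{1-p}\min\bigl((u^{p}-v^{p})^{+},m\bigr)$ and $-\eta\,v^{1-p}\min\bigl((u^{p}-v^{p})^{+},m\bigr)$ (Lemma \ref{lem:auxilury lemma}, Theorems \ref{thm: Comparison} and \ref{thm: Comparison at infinity}), and for the exterior domain an additional decay hypothesis \eqref{eq: special condition on v} is needed and verified in the proof via $|\nabla\log v|\lesssim |x|^{-1}$ and $u\in L^{p^{*}}$. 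Your proposal does correctly flag that a cutoff-and-pass-to-the-limit argument is needed at infinity, which is exactly what \eqref{eq: special condition on v} encodes, but the choice of test function and the choice of barrier are both different from (and insufficient compared to) the paper's.
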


\begin{theorem}
\label{thm:given growth comparison at infinity 2}  Let $\Omega$ be
an exterior domain in $\mathbb{R}^N$ such that
$\Omega^c=\mathbb{R}^N\backslash \Omega$ is bounded and
$f\in L^{\frac Np}(\Omega)$ a  nonnegative function. If  $ u\in
\mathcal{D}^{1,p}(\Omega)$ is  a nonnegative weak supersolution of
equation (\ref{eq: general equation}) in $\Omega$, then
\begin{eqnarray*}u(x)\ge  Cm
{|x|}^{-\gamma_2}& & \hbox{for}\:x\in\Omega,\end{eqnarray*} where
$m=\inf_{ \partial \Omega}u$ and
$C=\inf_{\partial\Omega}|x|^{\gamma_2}$.
\end{theorem}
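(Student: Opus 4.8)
The plan is to produce an explicit subsolution lying below $u$ on $\partial\Omega$ and then push it into $\Omega$ by a comparison principle. Since $f\ge 0$ and $u\ge 0$, the right-hand side $f|u|^{p-2}u$ of \eqref{eq: general equation} is nonnegative, so $u$ is a nonnegative weak supersolution of $-L_p w=0$ in $\Omega$. If $0\in\overline\Omega$ there is nothing to prove: if $0\in\Omega$ then, since $\gamma_2>(N-p)/p$ forces $|x|^{-\gamma_2}\notin L^{p^*}$ near the origin while $u\in\mathcal D^{1,p}(\Omega)$, one must have $m=0$; and if $0\in\partial\Omega$ then $C=\inf_{\partial\Omega}|x|^{\gamma_2}=0$. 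So assume $\dist(0,\overline\Omega)>0$ and put $v(x)=Cm|x|^{-\gamma_2}$ with $C=\inf_{\partial\Omega}|x|^{\gamma_2}$ and $m=\inf_{\partial\Omega}u$. A direct computation gives, for $\gamma>0$, $-\Delta_p(|x|^{-\gamma})=\gamma^{p-1}(N-p-(p-1)\gamma)\,|x|^{-(p-1)\gamma-p}$ while $\frac{\mu}{|x|^{p}}(|x|^{-\gamma})^{p-1}=\mu\,|x|^{-(p-1)\gamma-p}$, and these cancel exactly when $\gamma$ solves \eqref{eq: gamma 1 and gamma 2}; in particular $-L_p v=0$ pointwise on $\R^N\setminus\{0\}$, so $v$, being smooth there, is a weak solution of $-L_p w=0$ in $\Omega$. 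Moreover $v\in\mathcal D^{1,p}(\Omega)$, since $\gamma_2>(N-p)/p$ makes both $|x|^{-\gamma_2}\in L^{p^*}(\Omega)$ and $|x|^{-\gamma_2-1}\in L^{p}(\Omega)$ on the exterior region. Finally, for $x\in\partial\Omega$ one has $C=\inf_{\partial\Omega}|y|^{\gamma_2}\le|x|^{\gamma_2}$, hence $v(x)=Cm|x|^{-\gamma_2}\le m\le u(x)$.

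It therefore remains to deduce $u\ge v$ in $\Omega$ from $-L_p v=0\le -L_p u$ in $\Omega$, $v\le u$ on $\partial\Omega$, and $u,v\ge 0$ with $u,v\in\mathcal D^{1,p}(\Omega)$ — that is, to apply the weak comparison principle for $-L_p$ on $\Omega$. Two points need attention. First, $-L_p$ fails to be monotone because of the negative zeroth-order Hardy term, so comparison is not automatic; it holds precisely because $0\le\mu<\bar\mu$, which through Hardy's inequality lets the Hardy contribution be absorbed. Second, $\Omega$ is unbounded, so one must ensure that no boundary term ``at infinity'' appears: testing $-L_p v\le -L_p u$ against $\varphi=(v-u)^+$ is legitimate because $(v-u)^+\in\mathcal D^{1,p}_0(\Omega)$, and this is exactly where the integrability $u,v\in L^{p^*}(\Omega)$ enters. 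For $p=2$ the resulting inequality closes immediately: on $\{v>u\}\subset\{u\ge 0\}$ the $p$-Laplacian terms give $\int|\nabla(v-u)^+|^{2}$, the Hardy terms are bounded above by $\mu\int|x|^{-2}((v-u)^+)^{2}\le(\mu/\bar\mu)\int|\nabla(v-u)^+|^{2}$, and $\mu/\bar\mu<1$ forces $(v-u)^+\equiv 0$. For $p\ne 2$ the cleanest route is the substitution $u=v\phi$ — permitted since $u\equiv 0$ is trivial and otherwise $u>0$ in $\Omega$ by the weak Harnack inequality for supersolutions of $-L_p$: because $v>0$ is $L_p$-harmonic, a generalized Picone identity converts $-L_p u\ge -L_p v$ into a degenerate elliptic differential inequality for $\phi$ with no zeroth-order term, for which comparison with the constant solution $1$ is elementary; since $\phi\ge 1$ on $\partial\Omega$ and $\phi-1$ belongs to the correct space, we obtain $\phi\ge 1$, i.e.\ $u\ge v$, throughout $\Omega$.

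The genuinely delicate step — the one I expect to be the main obstacle — is precisely this weak comparison principle for $-L_p$ on an unbounded domain in the presence of the sign-indefinite Hardy potential: it requires simultaneously the sharp threshold $\mu<\bar\mu$ (to dominate the Hardy term via Hardy's inequality), the Picone/ground-state substitution to cover $p\ne 2$, and the $L^{p^*}$-decay of $u$ and $v$ to annihilate the contribution from infinity. Once it is established the proof is complete, since $u(x)\ge v(x)=Cm|x|^{-\gamma_2}$ for every $x\in\Omega$, which is the asserted estimate.
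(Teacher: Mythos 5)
Your overall plan mirrors the paper's: produce the explicit radial barrier $v(x)=Cm|x|^{-\gamma_2}$, check it is a solution of $-L_p v=0$ (hence a subsolution with $f\equiv 0$), compare boundary values, and invoke a comparison principle on the exterior domain. That much is correct, and the verification that $\gamma_2$ solves \eqref{eq: gamma 1 and gamma 2}, that $v\in\mathcal D^{1,p}(\Omega)$, and that $v\le u$ on $\partial\Omega$ is fine. The difference is that you treat the comparison principle as a black box and explicitly defer its proof, whereas the paper's entire Section~3 (Lemma~\ref{lem:auxilury lemma}, Theorem~\ref{thm: Comparison}, Theorem~\ref{thm: Comparison at infinity}) is devoted to establishing exactly that principle, using the Lindqvist test functions $\eta\,u^{1-p}\min\{(u^p-v^p)^+,m\}$ and $-\eta\,v^{1-p}\min\{(u^p-v^p)^+,m\}$ rather than a Picone substitution $u=v\phi$.

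The real gap is at infinity, and it is not merely the $L^{p^*}$-decay you cite. The paper's exterior-domain comparison principle (Theorem~\ref{thm: Comparison at infinity}) holds only under the quantitative hypothesis \eqref{eq: special condition on v},
\[
\limsup_{R\to\infty}\frac{1}{R}\int_{B_{2R}\setminus B_R}\text{(subsolution)}^p\,|\nabla\log(\text{supersolution})|^{p-1}=0,
\]
which is needed because the cutoff $\eta$ in the annulus $B_{2R}\setminus B_R$ produces boundary terms $I_2$ and $I_4$ that involve $|\nabla\log u|^{p-1}$, not $u$ or $\nabla u$ alone. In the application to Theorem~\ref{thm:given growth comparison at infinity 2} (subsolution $=Cm|x|^{-\gamma_2}$, supersolution $=u$) this is verified by combining H\"older with the Lindqvist-type Caccioppoli estimate $\int_{B_{2R}\setminus B_R}|\nabla\log u|^p\le CR^{N-p}$ for positive $p$-superharmonic $u$, together with $\gamma_2>(N-p)/p$ to get the exponent right. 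Your proposal never produces this gradient estimate, and $u,v\in L^{p^*}$ alone is not enough to kill those terms. The same issue would arise in your Picone route: after $u=v\phi$ the truncated test function again produces annular terms in $|\nabla\log u|^{p-1}$ or $|\nabla\log v|^{p-1}$, so a concrete decay estimate of this type is unavoidable, and it is precisely the step you flag as ``the main obstacle'' but do not carry out.

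A smaller point: your disposal of the case $0\in\Omega$ is circular. You argue $m$ must be $0$ because otherwise $|x|^{-\gamma_2}\notin L^{p^*}$ near $0$ would contradict $u\in\mathcal D^{1,p}(\Omega)$, but that contradiction uses the inequality $u\ge Cm|x|^{-\gamma_2}$ near $0$, i.e.\ the conclusion you are trying to prove. The paper sidesteps this by not admitting $0\in\Omega$: the Hardy potential lives at the origin, and in the application (Theorem~\ref{thm: inverse of main resuls}) one works in $B_1^c$.
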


Before we close this section, we outline the proof of
 Theorem \ref{thm:given growth comparison at origin 2}. Other theorems are proved similarly. Suppose
 that $u\in
\mathcal{D}^{1,p}(\Omega)$ is a nonnegative supersolution to
equation (\ref{eq: general equation}) in  a bounded domain
$\Omega$ containing the origin and $f$ is a nonnegative function
in $L^{\frac Np}(\Omega)$. Note that the function
$v(x)=m|x|^{-\gamma_1}$, $m=\inf_{\Om}u$,  is a solution of
equation
\[-L_p v=0\qquad \hbox{in}\: \Omega.\]
Therefore, $v$ is also a subsolution of equation (\ref{eq: general
equation}).
 To prove Theorem \ref{thm:given growth comparison at origin 2}, we establish a comparison
  principle between subsolutions and supersolutions of equation (\ref{eq: general equation}) on
  $\Omega$. The comparison principle is known in the case when $\mu=0$ and $f\equiv 0$, see e.g. \cite{Lindqvist2}. The comparison principle
 in the general case  is established in Theorem \ref{thm: Comparison} in Section 3. Then Theorem
    \ref{thm:given growth comparison at origin 2} follows by verifying that the
     supersolution $u$ and subsolution $v$ satisfy all the conditions required
     in Theorem \ref{thm: Comparison}. Our idea to prove the
     comparison principle for equation (\ref{eq: general equation}) is inspired by the paper \cite{Lin} of Lindqvist,
     where he proved the simplicity of the first eigenvalue of
     the (minus) $p$-Laplacian operator. The essential point of
     his proof is to use the test functions of the type
     $\varphi=u^{1-p}(u^p-v^p)$ and $\psi=v^{1-p}(v^p-u^p)$.

The paper is organized as follows.  In Section 2, we establish some preliminary  asymptotic
behaviors estimates for solutions of equation (\ref{eq:object}),
and in Section 3 we establish comparison principles for
subsolutions and supersolutions of equation \eqref{eq: general equation} both on bounded domains and
exterior domains. Section  4 is devoted to   the proof of   theorems listed above.

 Throughout the paper, we denote domains by
$\Omega$,  the complement of $\Omega$ in $\R^N$ by $\Omega^c$.  We
also denote by $B_R$ or by $B_R(0)$ the ball centered at origin
with radius $R$. For any $1\le q\le\infty$, $L^{q}(\Omega)$ is the
Banach space of Lebesgue measurable functions $u$ such that the
norm
\[
||u||_{q,\Omega}=\begin{cases}
\left(\int_{\Omega}|u|^{q}\right)^{\frac{1}{q}}, & \hbox{if }\,1\le q<\infty,\\
\esssup_{\Omega}|f|, & \text{if }q=\infty
\end{cases}
\] is finite. 

\section{Preliminary estimates}

In this section we prove the following preliminary asymptotic
behavior estimates for solutions of equation (\ref{eq:object}). These estimates are used to prove Theorem \ref{thm:main result}.

\begin{proposition}
\label{thm:poor estimate}
 Let $Q\in
L^{\infty}(\mathbb{R}^{N})$ and $u\in\mathcal{D}^{1,p}(\mathbb{R}^{N})$  be a weak solution of equation (\ref{eq:object}).  Then there exists a positive  constant $C$ depending on $N,p,\mu,||Q||_{\infty}$ and $u$, such that
\begin{eqnarray*}
|u(x)|\leq C|x|^{-\left(\frac{N-p}{p}-\tau_{0}\right)} &  &
\text{for }|x|< r_0,
\end{eqnarray*}
 and that
\begin{eqnarray*}
|u(x)|\leq C|x|^{-\left(\frac{N-p}{p}+\tau_{1}\right)} &  &
\text{for}\: |x|> r_1,
\end{eqnarray*}
where  $\tau_{0},\tau_{1}, r_0, r_1 >0$ are  constants depending on $N,p,\mu, ||Q||_{\infty}$ and $u$.
\end{proposition}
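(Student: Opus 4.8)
The plan is to prove the two estimates by a Moser-type iteration adapted to the presence of the Hardy potential and the critical term, exploiting crucially that $u\in\mathcal D^{1,p}(\R^N)$ so that $\|u\|_{L^{p^*}}$ is finite and the relevant local $L^{p^*}$-masses can be made arbitrarily small near $0$ and near $\infty$. First I would fix the small parameter $\epsilon_0>0$ (to be determined by $N,p,\mu,\|Q\|_\infty$) and choose $r_0$ small and $r_1$ large so that $\|u\|_{L^{p^*}(B_{2r_0})}+\|u\|_{L^{p^*}(\R^N\setminus B_{r_1/2})}<\epsilon_0$; this is possible because $u\in L^{p^*}(\R^N)$. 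The heart of the matter is a Caccioppoli-type inequality: testing the equation against $\varphi=\eta^p|u|^{p(\beta-1)}u$ (with a cutoff $\eta$ and a suitable truncation of $|u|$ to make $\varphi$ admissible) and using the Sobolev and Hardy inequalities, one gets, for $\beta>1$ in an appropriate range,
\[
\left(\int \eta^{p^*}|u|^{\beta p^*}\right)^{p/p^*}
\le C(\beta)\int\left(|\nabla\eta|^p+\frac{\eta^p}{|x|^p}\cdot\mu_{\rm rem}\right)|u|^{\beta p}
+C(\beta)\|Q\|_\infty\int\eta^p|u|^{\beta p+p^*-p},
\]
where $\mu<\bar\mu$ leaves a strictly positive margin in the Hardy inequality so the Hardy term can be absorbed into the left side; and the critical term is controlled by Hölder, pulling out $\|u\|_{L^{p^*}(\mathrm{supp}\,\eta)}^{p^*-p}<\epsilon_0^{p^*-p}$, which for $\epsilon_0$ small is absorbed as well. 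This yields a genuine reverse-Hölder inequality relating $\|u\|_{L^{\beta p^*}}$ on a smaller ball to $\|u\|_{L^{\beta p}}$ on a larger one.

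Next I would run the iteration. Starting from $\beta_0 p=p^*$ (so the right-hand side involves $\|u\|_{L^{p^*}}$, which is finite), set $\beta_{k+1}=\beta_k\cdot p^*/p$ and choose a nested sequence of annuli (near the origin, annuli $B_{r_0(1+2^{-k})}\setminus\{0\}$ shrinking to $B_{r_0}$; near infinity the analogous expanding family). Summing the resulting geometric-type series in the exponents and the cutoff constants, the standard Moser argument gives a bound of the form $\|u\|_{L^\infty(B_{r_0})}\le C\|u\|_{L^{p^*}(B_{2r_0})}$ and similarly on $\R^N\setminus B_{r_1}$. But an $L^\infty$ bound alone is not enough — I need the quantitative decay rates $|x|^{-((N-p)/p\mp\tau)}$. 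To extract these, I would instead iterate on the rescaled/weighted quantity: on the dyadic annulus $A_j=\{2^{-j-1}r_0<|x|<2^{-j}r_0\}$ the above gives $\sup_{A_j}|u|\le C(2^{-j}r_0)^{-N/p^*}\|u\|_{L^{p^*}(2A_j)}=C(2^{-j}r_0)^{-(N-p)/p}\|u\|_{L^{p^*}(2A_j)}$, and since $\sum_j\|u\|_{L^{p^*}(2A_j)}^{p^*}\le C\epsilon_0^{p^*}$ is small, a further iteration — or equivalently testing against $\varphi=\eta^p|x|^{-\delta}|u|^{p(\beta-1)}u$ with a small weight exponent $\delta$, which the margin in the Hardy inequality permits for $\delta$ depending on $\bar\mu-\mu$ — upgrades the flat bound on each annulus to the power-type bound with $\tau_0$ (resp. $\tau_1$) proportional to this admissible $\delta$.

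The main obstacle I anticipate is precisely this last upgrade: bookkeeping the constants $C(\beta_k)$ and the annulus-radius factors so that, after summation, one obtains an honest power of $|x|$ rather than merely boundedness, while simultaneously keeping the weight exponent $\delta$ inside the range where the Hardy inequality still absorbs the singular term (this is where $\mu<\bar\mu$ and hence $\tau_0,\tau_1>0$ is used essentially) and keeping the critical nonlinearity subcritical-in-effect via the smallness $\|u\|_{L^{p^*}}<\epsilon_0$ on the relevant region. A secondary technical point is the admissibility of the test functions: $\varphi$ must be truncated (replace $|u|^{p(\beta-1)}u$ by $|u|_M^{p(\beta-1)}u$ with a level $M$) and then one passes $M\to\infty$ by monotone/Fatou arguments, which requires care because $u$ is only in $\mathcal D^{1,p}$ and the Hardy term is borderline. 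Once these estimates are in place, the two displayed inequalities of the Proposition follow directly, with $\tau_0,\tau_1$ and $r_0,r_1$ of the stated dependence.
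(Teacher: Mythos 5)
Your overall plan is headed in the right direction: the proof must exploit the smallness of $\|u\|_{L^{p^*}}$ near $0$ and near $\infty$, absorb the Hardy term via $\mu<\bar\mu$, absorb the critical term via this smallness, and work on dyadic annuli rather than on a full neighbourhood of $0$ or $\infty$. You also correctly identify the central obstacle: a plain $L^\infty$ bound on each annulus of the form $\sup_{A_j}|u|\le C(2^{-j}r_0)^{-(N-p)/p}\|u\|_{L^{p^*}(2A_j)}$ only reproduces the exponent $(N-p)/p$, and one must extract a strictly positive gain $\tau_0$ (resp.\ $\tau_1$). However, your proposed mechanism for this gain is the part that is missing or questionable.

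The paper produces the gain not from any weighted test function, but from a hole-filling estimate on the $L^{p^*}$ mass itself. Testing the equation against $\varphi=\eta^p u$ with $\eta\equiv1$ on $B_{R/2}$, $\eta\equiv0$ outside $B_R$, and using the smallness $\|u\|_{L^{p^*}(B_R)}\le\epsilon_0$ to absorb both the Hardy term and the critical term, one arrives at $\int_{B_{R/2}}|u|^{p^*}\le C\int_{B_R\setminus B_{R/2}}|u|^{p^*}$, i.e.\ $\Psi(R/2)\le\theta\Psi(R)$ with $\theta=\frac{C}{C+1}<1$ uniform in $R\le\rho$. A standard real-analysis iteration lemma then yields $\Psi(R)\le C(R/\rho)^{\tau_0'}\Psi(\rho)$, which is precisely the polynomial decay $\|u\|_{L^{p^*}(B_R)}\lesssim R^{\tau_0}$ (and symmetrically at infinity). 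Feeding this power decay into your per-annulus $L^\infty$ bound gives $|u(x)|\lesssim|x|^{-(N-p)/p+\tau_0}$, which is the claim. Your sketch never produces this intermediate polynomial decay of the local $L^{p^*}$ mass: the phrase ``since $\sum_j\|u\|_{L^{p^*}(2A_j)}^{p^*}$ is small, a further iteration upgrades the flat bound'' does not by itself yield a power of $|x|$ — summability gives only $|u(x)|=o(|x|^{-(N-p)/p})$, which is strictly weaker than what the Proposition asserts. And your alternative route, testing against $\varphi=\eta^p|x|^{-\delta}|u|^{p(\beta-1)}u$, is essentially the Caffarelli--Kohn--Nirenberg/Cao--Han weight-transformation idea, which the paper explicitly flags as not workable for $p\neq2$; you would need to justify it independently, and as written it is not clear it closes.

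Two further points. First, your claimed bound $\|u\|_{L^\infty(B_{r_0})}\le C\|u\|_{L^{p^*}(B_{2r_0})}$ on a ball containing the origin is false for $\mu>0$, since $u$ may genuinely blow up like $|x|^{-\gamma_1}$; the iteration in $\beta$ does not run to $\infty$ near $0$ because the Hardy term imposes the constraint $\frac{p(s-1)+1}{s^p}>\mu/\bar\mu$ on the admissible exponents. You recover on the annuli, but this misstatement signals that the obstruction from the Hardy potential near $0$ needs to be kept in view. Second, to obtain the annular $L^\infty$ bound with a constant uniform in the annulus scale, the paper first needs an intermediate integrability gain $u\in L^t_{\rm loc}$ on annuli for some $t>p^*$ (Lemma~\ref{pro:the average integral estimate}), so that the rescaled potential $V_R=R^pQ(Rx)|u(Rx)|^{p^*-p}$ lies in $L^q$ with $q>N/p$ uniformly in $R$; this is what makes the local boundedness estimate from Han--Lin applicable with $R$-independent constants. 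Your proposal does not address this uniformity explicitly, though the Hölder-absorption route you sketch can in principle be made to carry it through.
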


These pointwise estimates will be proved by Moser's iteration argument \cite{Moser1960}.  We divide the proof into several lemmas.
\begin{lemma}
\label{lem: dependence on the solution } There exists a constant
$\epsilon_1=\epsilon_1(N,p,\mu,||Q||_{\infty})>0$ such that for any
solution $u\in\mathcal{D}^{1,p}(\mathbb{R}^{N})$ to equation
(\ref{eq:object}) and for any $\rho>0$ satisfying
\begin{equation}
||u||_{L^{p^{*}}(B_{\rho})}+||u||_{L^{p^{*}}(\mathbb{R}^{N}\backslash
B_{1/\rho})}\le\epsilon_1,\label{eq:smallness condition 1}
\end{equation} there exists a positive constant $C$ depending on $N,p,\mu,||Q||_{\infty},\rho$
 such that
\begin{eqnarray*}
||u||_{L^{p^{*}}(B_{R})}\leq CR^{\tau_{0}} &  & \text{for}\:
R\le\rho,
\end{eqnarray*}
and that
\begin{eqnarray*}
||u||_{L^{p^{*}}(B_{R}^{c})}\leq CR^{-\tau_{1}} &  & \text{for}\:
R\geq1/\rho,
\end{eqnarray*}
where $\tau_{0},\tau_{1}>0$  are two constants depending on $N,p,\mu$. \end{lemma}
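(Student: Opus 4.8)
The plan is to run a Caccioppoli-type energy estimate on dyadic annuli, turned into a decay/growth iteration. Fix a solution $u$ and a radius $\rho$ with $||u||_{L^{p^*}(B_\rho)}+||u||_{L^{p^*}(\mathbb{R}^N\setminus B_{1/\rho})}\le\epsilon_1$. The key device is to test the weak formulation of \eqref{eq:object} with $\varphi=\eta^p|u|^{p-2}u$, where $\eta$ is a cutoff adapted to a shell $B_{2r}\setminus B_{r}$ (near the origin) or $B_{R}\setminus B_{R/2}$ (near infinity), with $|\nabla\eta|\lesssim 1/r$. This produces, after absorbing the term with $\nabla\eta$ by Young's inequality and discarding the good sign of $-\Delta_p$, an inequality of the schematic form
\[
\int \eta^p|\nabla u|^p \;\le\; \frac{C}{r^p}\int_{\mathrm{supp}\,\nabla\eta}|u|^p \;+\;\mu\int \eta^p\frac{|u|^p}{|x|^p}\;+\;\int Q\,\eta^p|u|^{p^*}.
\]
Because $\mu<\bar\mu=((N-p)/p)^p$, the Hardy inequality on $\mathbb{R}^N$ lets me absorb the Hardy term into $\int|\nabla(\eta|u|^{\cdot})|^p$ with a strictly smaller constant; this is exactly where the subcriticality of $\mu$ enters quantitatively, and it forces the gain exponents $\tau_0,\tau_1$ to depend only on $N,p,\mu$.

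Next I would handle the critical nonlinearity. By Hölder on the shell, $\int Q\,\eta^p|u|^{p^*}\le ||Q||_\infty\,||u||_{L^{p^*}(\mathrm{supp}\,\eta)}^{p^*-p}\,(\int(\eta|u|)^{p^*})^{p/p^*}$, and the last factor is controlled by $\int|\nabla(\eta|u|)|^p$ via the Sobolev inequality. The smallness hypothesis \eqref{eq:smallness condition 1} guarantees $||u||_{L^{p^*}(\mathrm{supp}\,\eta)}^{p^*-p}$ is as small as needed on every shell inside $B_\rho$ (resp.\ outside $B_{1/\rho}$), so this term too is absorbed into the left side with a small coefficient. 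After these absorptions I am left with an inequality relating the $L^{p^*}$ mass of $u$ on a shell to its mass on the neighboring fatter shell. Writing $a_k=||u||_{L^{p^*}(B_{2^{-k}\rho}\setminus B_{2^{-k-1}\rho})}$, this is a one-step recursion $a_{k}^{p}\le \theta\, a_{k-1}^{p}$ for some fixed $\theta=\theta(N,p,\mu)<1$, whence $||u||_{L^{p^*}(B_R)}\le C R^{\tau_0}$ with $\tau_0=\tfrac1p\log_2(1/\theta)>0$; the infinity statement is identical with the shells marching outward and a companion ratio $\theta_1<1$, giving $\tau_1$. The constant $C$ then records the mass $||u||_{L^{p^*}(B_\rho)}$ (equivalently, depends on $u$ and on $\rho$).

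The main obstacle is making the two absorptions simultaneously rigorous: the Hardy term must be absorbed with a constant $<1$ that does not degenerate, and the critical term must be absorbed using only the \emph{tail} smallness, not smallness of the full $\mathcal{D}^{1,p}$ norm. The clean way is to apply the estimate to the truncated/rescaled function $\eta|u|$ and use the sharp Hardy inequality $\bar\mu\int|x|^{-p}|w|^p\le\int|\nabla w|^p$ together with Sobolev for the \emph{same} $w=\eta|u|$, so that everything lives on one side; one must be a little careful that $p\ge2$ versus $1<p<2$ changes the elementary inequality used to pass from $|\nabla u|^{p-2}\nabla u\cdot\nabla\varphi$ to $\eta^p|\nabla u|^p$ plus error, but in both ranges the standard convexity/vector inequalities for the $p$-Laplacian (as in \cite{Lindqvist2}) suffice. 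I would also note that $\varphi=\eta^p|u|^{p-2}u$ is not smooth; the usual remedy is to approximate, or equivalently to test with $\eta^p\, u_\delta$ where $u_\delta$ is a standard Lipschitz truncation, and pass to the limit — this is routine and I would only remark on it.
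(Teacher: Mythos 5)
Your overall strategy is the same as the paper's: a Caccioppoli estimate with a ball cutoff, absorption of the Hardy term by the sharp Hardy inequality, absorption of the critical term using the smallness hypothesis, and an iteration. Two points need correction, however.

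First, the test function. You propose $\varphi=\eta^{p}|u|^{p-2}u$, but your displayed schematic inequality (with $\int\eta^{p}|\nabla u|^{p}$ on the left and $\int Q\,\eta^{p}|u|^{p^{*}}$ on the right) is the one produced by $\varphi=\eta^{p}u$, which is what the paper uses. With $\varphi=\eta^{p}|u|^{p-2}u$ the right-hand side becomes $\int Q\,\eta^{p}|u|^{p^{*}+p-2}$ and the left-hand principal term is $(p-1)\int\eta^{p}|u|^{p-2}|\nabla u|^{p}\sim\int|\nabla(\eta|u|^{2(p-1)/p})|^{p}$, which Sobolev-embeds into $L^{p^{*}}$ of the wrong power of $u$. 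You should test with $\eta^{p}u$ (where $\eta\equiv1$ on $B_{R/2}$, $\eta\equiv0$ outside $B_{R}$, $|\nabla\eta|\le 4/R$) so that the Caccioppoli inequality is
\[
\Bigl(\int_{B_{R/2}}|u|^{p^{*}}\Bigr)^{p/p^{*}}\le C\Bigl(\int_{B_{R}\setminus B_{R/2}}|u|^{p^{*}}\Bigr)^{p/p^{*}},
\]
with $C=C(N,p,\mu)$ once the smallness hypothesis has killed the $\|Q\|_{\infty}$-dependence.

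Second, and more substantively, the claimed shell-to-shell recursion $a_{k}^{p}\le\theta\,a_{k-1}^{p}$ with $\theta<1$ does not follow from the Caccioppoli inequality, and is in fact false in general. What Caccioppoli gives is the hole-filling inequality $\Psi(R/2)\le C'\bigl(\Psi(R)-\Psi(R/2)\bigr)$ for $\Psi(R)=\int_{B_{R}}|u|^{p^{*}}$, which rearranges to the \emph{ball-mass} recursion $\Psi(R/2)\le\theta\,\Psi(R)$ with $\theta=C'/(1+C')<1$. From $\Psi(R/2)\le\theta\,\Psi(R)$ you cannot conclude that consecutive shell masses $a_{k}^{p^{*}}=\Psi(2^{-k}\rho)-\Psi(2^{-k-1}\rho)$ contract geometrically: for instance $\Psi(R)=2$, $\Psi(R/2)=1$, $\Psi(R/4)=0$ satisfies the ball recursion with $\theta=1/2$ yet has equal shell masses on consecutive shells. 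The paper instead iterates the ball recursion directly via the standard lemma (Lemma 3.5 in Chapter 4 of Han--Lin) to obtain $\Psi(R)\le\theta^{-1}\Psi(\rho)(R/\rho)^{\tau_{0}'}$ with $\tau_{0}'=\log(1/\theta)/\log 2$, and then sets $\tau_{0}=\tau_{0}'/p^{*}$. Your final conclusion $\|u\|_{L^{p^{*}}(B_{R})}\le CR^{\tau_{0}}$ is correct, but you should derive it from the ball-mass recursion rather than the shell recursion. The same correction applies to the estimate at infinity, applied to $\Psi^{\infty}(R)=\int_{B_{R}^{c}}|u|^{p^{*}}$.
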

Let $V=Q(x)|u|^{p^{*}-p}$. Equation (\ref{eq:object}) reads as \[
-L_pu=V|u|^{p-2}u\quad\hbox{in}\:\R^N.\]
\begin{proof}[proof of Lemma \ref{lem: dependence on the solution }]
We only prove the first inequality in Lemma \ref{lem: dependence
on the solution }. The second one can be proved similarly. Let
$R>0$ and $\eta\in C_{0}^{\infty}(\mathbb{R}^{N})$ be a cut-off
function such that $0\leq\eta\leq1$ in $\mathbb{R}^{N}$,
$\eta\equiv0$ on $B_{R}^{c}$, $\eta\equiv1$ on $B_{R/2}$ and
$|\nabla\eta|\leq4/R$. Substituting test function
$\varphi=\eta^{p}u$ into equation (\ref{eq:object}),  we obtain
\[
\langle-L_{p}u,\varphi\rangle\equiv\int_{\mathbb{R}^{N}}\left(|\nabla
u|^{p-2}\nabla
u\cdot\nabla\varphi-\frac{\mu}{|x|^{p}}|u|^{p-2}u\varphi\right)=\int_{\mathbb{R}^{N}}V|u|^{p-2}u\varphi.
\]
Note that for any $\delta\in(0,1)$ there is a constant
$C_{\delta}>0$ such that
\begin{eqnarray*}
\int_{\mathbb{R}^{N}}|\nabla u|^{p-2}\nabla u\cdot\nabla\varphi &
= & \int_{\mathbb{R}^{N}} \left(\eta^{p}|\nabla
u|^{p}+p\eta^{p-1}u|\nabla u|^{p-2}\nabla u\cdot\nabla\eta
 \right)\\& \geq & (1-\delta)\int_{\mathbb{R}^{N}}|\nabla(\eta u)|^{p}-C_{\delta}\int_{\mathbb{R}^{N}}|\nabla\eta|^{p}|u|^{p},
\end{eqnarray*}
and from the Hardy inequality we have
\[
\int_{\mathbb{R}^{N}}\frac{\mu}{|x|^{p}}|u|^{p-2}u\varphi\leq\frac{\mu}{\bar{\mu}}\int_{\mathbb{R}^{N}}|\nabla(\eta
u)|^{p}.
\]
Taking $\delta=\delta(N,p,\mu)>0$ small enough such that
$(1-\delta)>{\mu}/{\bar{\mu}}$, we have
\begin{eqnarray*}
\langle-L_{p}u,\varphi\rangle & \geq & (1-\delta-\mu/\bar{\mu})\int_{\mathbb{R}^{N}}|\nabla(\eta u)|^{p}-C_{\delta}\int_{\mathbb{R}^{N}}|\nabla\eta|^{p}|u|^{p}\\
 & \geq & C_{1}\left(\int_{\mathbb{R}^{N}}|\eta u|^{p^{*}}\right)^{\frac{p}{p^{*}}}-C_{2}\left(\int_{B_{R}\backslash B_{R/2}}|u|^{p^{*}}\right)^{\frac{p}{p^{*}}},
\end{eqnarray*}
where $C_{i}=C_{i}(N,p,\mu)$, $i=1,2$. On the other hand,
\[
\left|\int_{\mathbb{R}^{N}}V|u|^{p-2}u\varphi\right|\le||V||_{\frac{N}{p},B_{R}}\left(\int_{\mathbb{R}^{N}}|\eta
u|^{p^{*}}\right)^{\frac{p}{p^{*}}}\le||Q||_{\infty}||u||_{p^{*},B_{R}}^{p^{*}-p}\left(\int_{\mathbb{R}^{N}}|\eta
u|^{p^{*}}\right)^{\frac{p}{p^{*}}}.
\]
Thus we obtain that
\[
\left(\int_{\mathbb{R}^{N}}|\eta
u|^{p^{*}}\right)^{\frac{p}{p^{*}}}\le
C\left(\int_{B_{R}\backslash
B_{R/2}}|u|^{p^{*}}\right)^{\frac{p}{p^{*}}}+C||Q||_{\infty}||u||_{p^{*},B_{R}}^{p^{*}-p}\left(\int_{\mathbb{R}^{N}}|\eta
u|^{p^{*}}\right)^{\frac{p}{p^{*}}}.
\]
Set
\[
\epsilon_1=\left(4C||Q||_{\infty}\right)^{-1/(p^{*}-p)},
\]
and choose $\rho>0$ small enough such that (\ref{eq:smallness condition
1}) holds. Then
$C||Q||_{\infty}||u||_{p^{*},B_{R}}^{p^{*}-p}\le1/2$ for all
$0<R\le\rho$. We obtain
\begin{eqnarray*}
\int_{B_{R/2}}|u|^{p^{*}}\le\int_{\mathbb{R}^{N}}|\eta
u|^{p^{*}}\leq C\int_{B_{R}\backslash B_{R/2}}|u|^{p^{*}} &  &
\forall\;0<R\le\rho,
\end{eqnarray*}
where $C$ depends only on $N,p,\mu$. Denote
$\Psi(R)=\int_{B_{R}}|u|^{p^{*}}$ for $R\le\rho$. We get that
\begin{eqnarray*}
\Psi(R/2)\leq\theta\Psi(R), &  & \forall\:0<R\le\rho,
\end{eqnarray*}
where $\theta=\frac{C}{C+1}\in(0,1)$ depends only on $N,p,\mu$.
Now applying Lemma 3.5 in \cite[chapter 4]{LinFanghua} to $\Psi$
on the interval $[0,\rho]$, we obtain
\[
\Psi(R)\le\frac{1}{\theta}\Psi(\rho)\left(\frac{R}{\rho}\right)^{\tau_{0}^{'}}\quad\forall\:0<R\le\rho,
\]
where $\tau_{0}^{'}=\log(1/\theta)/\log2$ depends only on
$\theta$. Now the first inequality of this lemma follows by
setting $\tau_{0}=\tau_{0}^{'}/p^{*}$ and
$C=(\theta^{-1}\rho^{-\tau_{0}^{'}}\Psi(\rho))^{1/p^{*}}$.
\end{proof}
Recall that we denote by $B_R$ the ball $B_R(0)$ centered at origin with radius $R$. Let $A_{R}=B_{8R}\backslash
B_{R/8}$ and $D_{R}=B_{4R}\backslash B_{R/4}$ for $R>0$. We need
the following uniform estimate with respect to $R$.
\begin{lemma}
\label{pro:the average integral estimate} Let
$t\in(p^{*},N/\gamma_{1})$. Then there exists a constant
$\epsilon_2=\epsilon_2(N,p,\mu,||Q||_{\infty},t)>0$ such that for any
solution $u\in\mathcal{D}^{1,p}(\mathbb{R}^{N})$ to equation
(\ref{eq:object}) and for any $\rho>0$ satisfying
\begin{equation}
||u||_{L^{p^{*}}(B_{\rho})}+||u||_{L^{p^{*}}(\mathbb{R}^{N}\backslash
B_{1/\rho})}\le\epsilon_2,\label{eq:smallness condition 2}
\end{equation} there holds
\begin{equation}
\left(\fint_{D_{R}}|u|^{t}\right)^{\frac{1}{t}}\leq
C\left(\fint_{A_{R}}|u|^{p^{*}}\right)^{\frac{1}{p^{*}}},\quad\forall\,
R<\rho/8\text{ or }R>8/\rho,\label{eq:local improvement 1}
\end{equation}
where $\fint_{D_{R}}|u|^{t}=\frac{1}{|D_{R}|}\int_{D_{R}}|u|^{t}$ and $C>0$ is a constant depending on
$N,p,\mu,t,||Q||_{\infty}$ and $\rho$ but independent of $R$.
\end{lemma}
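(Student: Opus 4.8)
The plan is to run a Moser iteration on a fixed annulus, obtained from $D_{R}\subset A_{R}$ by rescaling, in which the Hardy term is controlled by Hardy's inequality and the critical term $Q|u|^{p^{*}-p}$ is absorbed using the smallness of its $L^{N/p}$ norm granted by \eqref{eq:smallness condition 2}. First, since both sides of \eqref{eq:local improvement 1} only involve $|u|$ and the argument below applies verbatim to $u^{+}$ (test the equation with $\eta^{p}(u^{+})^{q-p+1}$) and to $u^{-}$, we may assume $u\ge 0$ and that $u$ is a subsolution. For $R>0$ set $v(y)=R^{(N-p)/p}u(Ry)$; the exponent $(N-p)/p$ is chosen precisely so that the powers of $R$ cancel, and a direct computation shows $v$ is a nonnegative weak subsolution of $-L_{p}v\le Q_{R}v^{p^{*}-1}$ in $\R^{N}$ with $Q_{R}(y)=Q(Ry)$, $||Q_{R}||_{\infty}=||Q||_{\infty}$; the annuli $A_{R},D_{R}$ become the fixed annuli $A_{1}=B_{8}\setminus B_{1/8}$, $D_{1}=B_{4}\setminus B_{1/4}$; and, because $p^{*}$ is the critical Sobolev exponent, \eqref{eq:local improvement 1} is scale invariant, i.e.\ equivalent to $\big(\fint_{D_{1}}v^{t}\big)^{1/t}\le C\big(\fint_{A_{1}}v^{p^{*}}\big)^{1/p^{*}}$. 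Moreover $A_{R}\subset B_{\rho}$ when $R<\rho/8$ and $A_{R}\subset\R^{N}\setminus B_{1/\rho}$ when $R>8/\rho$, so \eqref{eq:smallness condition 2} gives in either case $||v||_{L^{p^{*}}(A_{1})}=||u||_{L^{p^{*}}(A_{R})}\le\epsilon_{2}$.

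For the iteration step, fix $q\ge p$ and a cut-off $\eta\in C_{0}^{\infty}(A_{1})$. Testing the subsolution inequality with $\varphi=\eta^{p}v^{q-p+1}$ (legitimate after the usual truncation $v_{\ell}=\min(v,\ell)$ and passage to the limit) and treating the $\nabla\eta$ cross-term by Young's inequality leads to an estimate of the form
\[
\frac{q-p+1}{(q/p)^{p}}\int\eta^{p}|\nabla(v^{q/p})|^{p}\le C(q)\int|\nabla\eta|^{p}v^{q}+\mu\int\frac{\eta^{p}v^{q}}{|x|^{p}}+||Q||_{\infty}\int v^{p^{*}-p}\,\eta^{p}v^{q}.
\]
Since $\eta v^{q/p}\in W^{1,p}(\R^{N})$ is supported away from the origin, Hardy's inequality (constant $\bar{\mu}^{-1}$) applies and gives $\mu\int\eta^{p}v^{q}|x|^{-p}\le\frac{\mu}{\bar{\mu}}(1+\delta)\int\eta^{p}|\nabla(v^{q/p})|^{p}+C_{\delta}\int|\nabla\eta|^{p}v^{q}$; by H\"older with exponents $N/p$ and $N/(N-p)$, the identity $(p^{*}-p)\tfrac{N}{p}=p^{*}$, and Sobolev's inequality, the critical term is $\le||Q||_{\infty}||v||_{L^{p^{*}}(A_{1})}^{p^{*}-p}\big(C_{N}(1+\delta')\int\eta^{p}|\nabla(v^{q/p})|^{p}+C_{N,\delta'}\int|\nabla\eta|^{p}v^{q}\big)$. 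The algebraic heart of the matter is that, using $\mu=\gamma_{1}^{p-1}\big[(N-p)-(p-1)\gamma_{1}\big]$ from \eqref{eq: gamma 1 and gamma 2}, the map $k\mapsto(kp-p+1)k^{-p}$ is strictly decreasing on $[1,\infty)$ and equals $\mu/\bar{\mu}$ exactly at $k=(N-p)/(p\gamma_{1})$, so
\[
\frac{q-p+1}{(q/p)^{p}}>\frac{\mu}{\bar{\mu}}\qquad\Longleftrightarrow\qquad q<\frac{N-p}{\gamma_{1}}.
\]
Hence, for $q<(N-p)/\gamma_{1}$ there is a positive gap; choosing $\delta,\delta'$ small and $\epsilon_{2}$ small enough (depending on this gap) we absorb all three gradient contributions into the left-hand side, obtaining the Caccioppoli bound $\int\eta^{p}|\nabla(v^{q/p})|^{p}\le C\int|\nabla\eta|^{p}v^{q}$, which combined with Sobolev applied to $\eta v^{q/p}$ is the reverse-H\"older step $||v||_{L^{q\chi}(A')}\le C\,||v||_{L^{q}(A)}$ with $\chi=p^{*}/p=N/(N-p)$, on nested annuli $A'\subset A$ between $D_{1}$ and $A_{1}$.

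To finish, starting from $||v||_{L^{p^{*}}(A_{1})}\le\epsilon_{2}$ we iterate the step with exponents $q_{j}=p^{*}\chi^{j}$ along a finite chain of annuli interpolating between $A_{1}$ and $D_{1}$. Because $\chi\cdot\frac{N-p}{\gamma_{1}}=\frac{N}{\gamma_{1}}>t$, we may stop after $m=O(\log t)$ steps with $q_{m}\ge t$ while keeping every \emph{testing} exponent $q_{0},\dots,q_{m-1}$ strictly below $(N-p)/\gamma_{1}$; since $\epsilon_{2}$ only needs to beat the worst (largest-$q$) gap, it suffices to take $\epsilon_{2}=\epsilon_{2}(N,p,\mu,||Q||_{\infty},t)$. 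The finite product of step-constants is bounded by a constant of the same dependence, giving $||v||_{L^{t}(D_{1})}\le C||v||_{L^{p^{*}}(A_{1})}$, i.e.\ $\big(\fint_{D_{1}}v^{t}\big)^{1/t}\le C\big(\fint_{A_{1}}v^{p^{*}}\big)^{1/p^{*}}$; undoing the rescaling yields \eqref{eq:local improvement 1} with $C$ independent of $R$, and the same argument applied to $-u$ covers $|u|$.

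The main obstacle is the Caccioppoli step: Hardy's inequality is essentially the only general device available for the singular term $\int\eta^{p}v^{q}|x|^{-p}$, but it costs a factor $(q/p)^{p}$, so the iteration cannot be pushed past the testing exponent $(N-p)/\gamma_{1}$; getting this threshold exactly right requires the precise relation between $\mu$, $\bar{\mu}$ and $\gamma_{1}$ encoded in \eqref{eq: gamma 1 and gamma 2}. Amplified once more by the Sobolev factor $\chi=N/(N-p)$, this is exactly what forces the restriction $t<N/\gamma_{1}$ and the dependence of $\epsilon_{2}$ on $t$.
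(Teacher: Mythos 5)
Your proposal is correct and follows the same route as the paper: rescale to the fixed annulus $A_1$, test with $\eta^p v^{q-p+1}$ (after the standard truncation), absorb the Hardy term using the inequality together with the threshold $(q-p+1)(p/q)^p>\mu/\bar\mu$ valid precisely for $q<(N-p)/\gamma_1$, absorb the critical term by H\"older and Sobolev using the smallness of $\|u\|_{L^{p^*}(A_R)}$, and iterate with factor $\chi=p^*/p$ to reach any $t<N/\gamma_1$. The only cosmetic differences are that you normalize the rescaling by $R^{(N-p)/p}$ so the potential becomes $Q_R(y)=Q(Ry)$ rather than $V_R(y)=R^pV(Ry)$ (both are scale-invariant in the relevant $L^{N/p}$ norm), and that you spell out the algebraic verification that $(kp-p+1)k^{-p}=\mu/\bar\mu$ at $k=(N-p)/(p\gamma_1)$, which the paper states without proof and which follows from \eqref{eq: gamma 1 and gamma 2}.
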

\begin{proof}
The proof is essentially the same as that of Theorem 1.3 in \cite[Chapter 4]{LinFanghua}. Here we need to show that  the constant $C$ in (\ref{eq:local improvement 1}) is independent of $R$.
Set $v(x)=u(Rx)$ for $R>0$. Then $v$ satisfies
\[
-L_{p}v=V_{R}|v|^{p-2}v\quad\hbox{in}\: A_{1},
\]
where $V_{R}(x)=R^{p}V(Rx)$.   Define $\bar{v}=\max(v,0)$ and $v_{m}=\min(\bar{v},m)$ for $m\ge 1$.  Substituting,
for any $\eta\in C_{0}^{\infty}(A_{1})$, 
and $\frac{N-p}{p\gamma_{1}}>s\ge1,$  the
test function $\varphi=\eta^{p}v_{m}^{p(s-1)}\bar{v}$ into the
equation of $v$, we  get
\[
\langle-L_{p}v,\varphi\rangle\equiv\int_{A_1}\left(|\nabla
u|^{p-2}\nabla
u\cdot\nabla\varphi-\frac{\mu}{|x|^{p}}|u|^{p-2}u\varphi\right)=\int_{A_{1}}V_{R}|v|^{p-2}v\varphi.
\]
It is easy to see that for any $\delta>0$ small there exists
$C_{\delta}>0$ such that
\[
\int_{A_{1}}|\nabla v|^{p-2}\nabla
v\cdot\nabla\varphi\geq(1-\delta)\frac{p(s-1)+1}{s^{p}}\int_{A_{1}}|\nabla(\eta
v_{m}^{s-1}\bar{v})|^{p}-C_{\delta}\int_{A_{1}}|\nabla\eta|^{p}v_{m}^{p(s-1)}\bar{v}^{p},
\]
and from the Hardy inequality,
\[
\int_{A_{1}}\frac{\mu}{|x|^{p}}|v|^{p-2}v\varphi=\int_{A_{1}}\frac{\mu}{|x|^{p}}(\eta
v_{m}^{s-1}\bar{v})^{p}\leq\frac{\mu}{\bar{\mu}}\int_{A_{1}}|\nabla(\eta
v_{m}^{s-1}\bar{v})|^{p}.
\]
Since $\frac{p(s-1)+1}{s^{p}}>\frac{\mu}{\bar{\mu}}$ for all
$s\in\left(\frac{N-p}{p\gamma_{2}},\frac{N-p}{p\gamma_{1}}\right)$,
we can choose $\delta$ small enough such that
\begin{eqnarray*}
\langle-L_{p}v,\varphi\rangle
 & \geq & \left((1-\delta)\frac{p(s-1)+1}{s^{p}}-\frac{\mu}{\bar{\mu}}\right)\int_{A_{1}}|\nabla(\eta v_{m}^{s-1}\bar{v})|^{p}-C_{\delta}\int_{A_{1}}|\nabla\eta|^{p}v_{m}^{p(s-1)}\bar{v}^{p}\\
 & \geq & C_{1}\left(\int_{A_{1}}|\eta v_{m}^{s-1}\bar{v}|^{p\chi}\right)^{1/\chi}-C_{2}\int_{A_{1}}|\nabla\eta|^{p}v_{m}^{p(s-1)}\bar{v}^{p}
\end{eqnarray*}
for some constants $C_{1},C_{2}>0$ depending on $N,p,\mu,s$, where
$\chi=p^{*}/p$. On the other hand, H\"older's inequality gives us
\[
\int_{A_{1}}V_{R}|v|^{p-2}v\varphi\leq||V_{R}||_{\frac{N}{p},A_{1}}\left(\int_{A_{1}}|\eta
v_{m}^{s-1}\bar{v}|^{p\chi}\right)^{1/\chi}\le||Q||_{\infty}||u||_{p^{*},A_{R}}^{p^{*}-p}\left(\int_{A_{1}}|\eta
v_{m}^{s-1}\bar{v}|^{p\chi}\right)^{1/\chi}.
\]
Therefore we have
\begin{equation}
\left(\int_{A_{1}}|\eta
v_{m}^{s-1}\bar{v}|^{p\chi}\right)^{1/\chi}\leq
C_3\int_{A_{1}}|\nabla\eta|^{p}v_{m}^{p(s-1)}\bar{v}^{p}+C_3||u||_{L^{p^{*}}(A_{R})}^{p^{*}-p}\left(\int_{A_{1}}|\eta
v_{m}^{s-1}\bar{v}|^{p\chi}\right)^{1/\chi}\label{eq: 2.3.1}
\end{equation}
for some constant $C_3=C_3(N,p,\mu,||Q||_{\infty},s)>0$.

Fix $t\in(p^{*},N/\gamma_{1})$ and $k\in\mathbb{N}$ so that
$p\chi^{k}\le t<p\chi^{k+1}$.  Then there exists a positive constant
$C_{3}=C_{3}(N,p,\mu,||Q||_{\infty},t)$ such that (\ref{eq:
2.3.1}) holds for all $1\le s\le\min\left\{
\frac{N-p}{p\gamma_{1}},\chi^{k}\right\} $.

Set
\[
\epsilon_2\equiv\left(4C_{3}\right)^{-1/(p^{*}-p)}
\]
 and choose $\rho\in (0,1) $ small enought such that 
(\ref{eq:smallness condition 2}) holds. Then
\begin{eqnarray*}
||u||_{L^{p^{*}}(A_{R})}\le\epsilon_2, &  &
\forall\:0<R\le\rho/8\text{ or }R\ge8/\rho.
\end{eqnarray*}
Therefore, for all $0<R\le\rho/8\text{ or }R\ge8/\rho$, we have
\begin{eqnarray*}
\left(\int_{A_{1}}|\eta
v_{m}^{s-1}\bar{v}|^{p\chi}\right)^{1/\chi}\leq
C\int_{A_{1}}|\nabla\eta|^{p}v_{m}^{p(s-1)}\bar{v}^{p}, &  &
\end{eqnarray*}
for all $1\le s\le\min\left\{
\frac{N-p}{p\gamma_{1}},\chi^{k}\right\} $, where $C>0$ depends
only on $N,p,\mu,t,||Q||_{\infty},\rho$. The same procedure can be
applied also to $(-v)^{+}=\max(-v,0)$, the positive part of $-v$.

Now by choosing appropriate functions $\eta$ and then applying
Moser's iteration method \cite{Moser1960}, we conclude
that for any $t\in(p^{*},N/\gamma_{1})$ fixed, after finitely many
times of iteration, we can achieve the estimate
\begin{equation}
\left(\fint_{D_{1}}|v|^{t}\right)^{\frac{1}{t}}\leq
C\left(\fint_{A_{1}}|v|^{p^{*}}\right)^{\frac{1}{p^{*}}}.\label{eq:
uniform estimate for v}
\end{equation}
This proves  (\ref{eq:local improvement 1}).
\end{proof}
Now we prove  Proposition \ref{thm:poor estimate}.

\begin{proof}[Proof of Proposition \ref{thm:poor estimate}] Let
$t=(p^{*}+N/\gamma_{1})/2\in(p^{*},N/\gamma_{1})$ as in Lemma
\ref{pro:the average integral estimate} and
$\epsilon_0=\min(\epsilon_1,\epsilon_2)$, where $\epsilon_1$ and $\epsilon_2$ are as in Lemma \ref{lem: dependence on the
solution } and Lemma \ref{pro:the average integral estimate} respectively.   Let $\rho>0$ such that (\ref{eq:smallness condition 1})
holds for $\epsilon_0$. Still consider the equation of $v(x)=u(Rx)$ given by
\[
-\Delta_{p}v+c(x)|v|^{p-2}v=0\quad\text{in}\: D_{1},
\]
where $c(x)=-\mu|x|^{-p}-V_{R}(x)$. Note that $|x|^{-p}$ is a
bounded function on $D_{1}$, and $V_{R}(x)=R^pV(Rx)\in L^{q}(D_{1})$ with
$q=\frac{t}{p^{*}-p}>\frac{N}{p}$ due to \eqref{eq: uniform
estimate for v}. Following the proof of Theorem 1.1 in
\cite[Chapter 4]{LinFanghua} one gets that
\begin{equation}
\sup_{B}|v|\le
C\left(\fint_{2B}|v|^{p}\right)^{\frac{1}{p}}\label{eq:estimate
for rescaled solution}
\end{equation}
for any ball $B=B(x,r)$ such that $2B=B(x,2r)\subset D_{1}$, where
$C=C(N,p,\mu,||V_{R}||_{L^{q}(2B)})$.

We claim that the quantity $||V_{R}||_{L^{q}(D_{1})}$ is uniformly
bounded with respect to $R$. Moreover, there exists a constant
$C>0$ depending on $N,p,\mu,q,||Q||_{\infty}$ and $\rho$ such that
\[
||V_{R}||_{L^{q}(D_{1})}\leq
C||u||_{p^{*},\mathbb{R}^{N}}^{p^{*}-p}\quad\forall\:0<R\leq\rho/8\text{
or }R\ge8/\rho.
\]
Indeed, by the definition of $V_{R}$ and (\ref{eq:local improvement
1}),
\begin{eqnarray*}
||V_{R}||_{q,D_{1}} & = & R^{p-\frac{N}{q}}||V||_{q,D_{R}}\\
 & \le & ||Q||_{\infty}R^{p-\frac{N}{q}}||u||_{t,D_{R}}^{p^{*}-p}\\
 & \leq & CR^{p-\frac{N}{q}-\left(\frac{N}{p^{*}}-\frac{N}{t}\right)(p^{*}-p)}||u||_{p^{*},A_{R}}^{p^{*}-p}\\
 & \le & C||u||_{p^{*},\mathbb{R}^{N}}^{p^{*}-p},
\end{eqnarray*}
since $p-\frac{N}{q}-(\frac{N}{p^{*}}-\frac{N}{t})(p^{*}-p)=0$.
Therefore the estimate (\ref{eq:estimate for rescaled solution})
is uniform with respect to $0<R\leq\rho/8$ or $R\ge8/\rho$.

Now a simple covering argument leads us to
\[
\sup_{B_{2}\backslash B_{1}}|v|\leq
C\left(\fint_{D_{1}}|v|^{p}\right)^{\frac{1}{p}}.
\]
Recall that $v(x)=u(Rx)$.  Equivalently we arrive at
\[
\sup_{B_{2R}\backslash B_{R}}|u(x)|\le
C\left(\fint_{D_{R}}|u|^{p}\right)^{\frac{1}{p}}\quad\forall\:0<R\leq\rho/8\text{
or }R\ge8/\rho.
\]
Hence by H\"older's inequality,
\begin{equation}
\sup_{B_{2R}\backslash B_{R}}|u|\le
C\left(\fint_{D_{R}}|u|^{p^{*}}\right)^{\frac{1}{p^{*}}}=C||u||_{L^{p*}(D_{R})}R^{\frac{p-N}{p}},\quad\forall\:0<R\leq\rho/8\text{
or }R\ge8/\rho,\label{eq:maximum estimate by integral}
\end{equation}
 where $C$ depends only on $N,p,\mu,q,||Q||_{\infty},\rho$ and $||u||_{p^{*},\mathbb{R}^{N}}$.

Since $A_{R}\subset B_{\rho}$ for $0<R\leq\rho/8$ and
$A_{R}\subset B_{1/\rho}^{c}$ for $R\ge8/\rho$, by Lemma
\ref{lem: dependence on the solution },  there exist
$\tau_{0},\tau_{1}>0$ depending only on $N,p,\mu$ such that
$||u||_{L^{p*}(D_{R})}\le CR^{\tau_{0}}$ if $0<R\leq\rho/8$ and
$||u||_{L^{p*}(D_{R})}\le CR^{-\tau_{1}}$ if $R\ge8/\rho$.
Therefore, by letting $r_0=\rho/8, r_1=8/\rho$ and  inserting the estimates of $||u||_{L^{p*}(D_{R})}$
into (\ref{eq:maximum estimate by integral}), we complete the
proof of Proposition \ref{thm:poor estimate}. \end{proof}

Next  we prove that
some special functions are supersolutions.
\begin{proposition}\label{prop:computation of sub-super solution}

(1) Given two constants $A,\alpha\in \mathbb{R}$,  $A>0$ and
 $\alpha<p$. There exist constants $1>\delta,\epsilon>0$ depending on $N,p,\mu,A,\alpha
$
 such that the function $v(x)=|x|^{-\gamma_1}(1-\delta|x|^{\epsilon})\in \mathcal{D}^{1,p}(B_{R_0})$ is a positive supersolution to equation
\begin{eqnarray}
-L_{p}v= g(x) {|v|^{p-2}v},&&  x\in
B_{R_0},\label{eq: 2.4.1}\end{eqnarray} where $g$ is a positive function in  $L^{\frac Np}(B_{R_0})$ satisfying
$$ g(x)\ge A|x|^{-\alpha },\quad x\in B_{R_0},$$
with some constant $1>R_0=R_0(N,p,\mu,A,\alpha)>0$.

(2) Given two constants $A,\alpha\in \mathbb{R}$, $A>0$
and
 $\alpha>p$. There exist constants $1>\delta,\epsilon>0$ depending on $N,p,\mu,A,\alpha
$ such that the function
$v(x)=|x|^{-\gamma_2}(1-\delta|x|^{-\epsilon})\in \mathcal{D}^{1,p}(B^c_{R_1})$ is a supersolution
to  equation
\begin{eqnarray}
-L_{p}v=g(x) {|v|^{p-2}v},&& x\in B^c_{R_1},\label{eq: 2.4.2}\end{eqnarray}
where $g$ is a positive function in $L^{\frac Np}(B^c_{R_1})$  satisfying
$$ g(x)\ge A|x|^{-\alpha },\quad x\in B^c_{R_1},$$
with some constant $R_1=R_1(N,p,\mu,A,\alpha)>1$.
 \end{proposition}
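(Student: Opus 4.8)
The plan is to reduce everything to a one--dimensional computation, since the candidate functions are radial. For a radial power $w(r)=r^{-\gamma}$ one has
\[
-L_p\bigl(|x|^{-\gamma}\bigr)=\bigl[(N-p)\gamma^{p-1}-(p-1)\gamma^{p}-\mu\bigr]|x|^{-\gamma(p-1)-p},
\]
which vanishes exactly when $\gamma$ is a root of \eqref{eq: gamma 1 and gamma 2}, i.e. for $\gamma\in\{\gamma_1,\gamma_2\}$; in particular $\gamma_i^{p-1}\bigl(N-p-(p-1)\gamma_i\bigr)=\mu$. So I would view $v$ as a perturbation of such an exact solution and compute $-L_p v$ pointwise on $B_{R_0}\setminus\{0\}$ (resp. on $B^c_{R_1}$), using that there $v$ is smooth, radial and strictly decreasing in $r=|x|$, so with $a(r)=-v'(r)>0$ one has $-L_p v=r^{1-N}\bigl(r^{N-1}a^{p-1}\bigr)'-\mu r^{-p}v^{p-1}$. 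The point is to show $-L_p v\ge g\,v^{p-1}\ge A|x|^{-\alpha}v^{p-1}$ near the singular point.

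For part (1) when $\mu>0$ (so $\gamma_1>0$) I would write $v(r)=r^{-\gamma_1}(1-\delta r^{\epsilon})$, whence $a(r)=\gamma_1 r^{-\gamma_1-1}\bigl(1+\tfrac{\delta(\epsilon-\gamma_1)}{\gamma_1}r^{\epsilon}\bigr)$, and expand $a^{p-1}$ and $v^{p-1}$ in powers of $\delta r^{\epsilon}$. The $O(1)$ term of $-L_p v$ is $\bigl(\gamma_1^{p-1}(N-p-(p-1)\gamma_1)-\mu\bigr)r^{-\gamma_1(p-1)-p}$, which vanishes by the defining relation; the next term is $(p-1)\,\delta\,C(\epsilon)\,r^{-\gamma_1(p-1)-p+\epsilon}$ with
\[
C(\epsilon)=\gamma_1^{p-2}(\epsilon-\gamma_1)\bigl(N-p-(p-1)\gamma_1+\epsilon\bigr)+\mu .
\]
One checks $C(0)=0$ (again the defining relation) and $C'(0)=\gamma_1^{p-2}\bigl(N-p-p\gamma_1\bigr)>0$, the strict positivity being precisely the known inequality $\gamma_1<(N-p)/p$. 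Hence $C(\epsilon)>0$ for all small $\epsilon>0$; fixing such an $\epsilon$ and then taking $\delta$ small (so that the remaining terms, which all carry an extra factor $\delta r^{\epsilon}$, stay below half of the main term for $r$ small), one obtains $-L_p v\ge c\,\delta\, r^{-\gamma_1(p-1)-p+\epsilon}$ on a small ball. Since $g\,v^{p-1}\le A\, r^{-\alpha}v^{p-1}\le A\,r^{-\alpha-\gamma_1(p-1)}$ and $\alpha<p$, the leftover power $r^{\alpha-p+\epsilon}$ tends to $+\infty$ as $r\to0$ once $\epsilon$ is small, so a suitably small $R_0=R_0(N,p,\mu,A,\alpha)$ makes $-L_p v\ge g\,v^{p-1}$ hold pointwise on $B_{R_0}\setminus\{0\}$.

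The degenerate subcase $\mu=0$ (so $\gamma_1=0$, where $\gamma_1^{p-2}$ may vanish) is treated directly: $v=1-\delta r^{\epsilon}$ and $-L_p v=-\Delta_p v=(\delta\epsilon)^{p-1}\bigl(N-p+(p-1)\epsilon\bigr)r^{(p-1)\epsilon-p}>0$, and since $v^{p-1}\le1$ and $(p-1)\epsilon-p+\alpha<0$ for small $\epsilon$, again a small $R_0$ works. Part (2) is entirely parallel with $v(r)=r^{-\gamma_2}(1-\delta r^{-\epsilon})$: the $O(1)$ term cancels, the next term is $(p-1)\,\delta\,D(\epsilon)\,r^{-\gamma_2(p-1)-p-\epsilon}$ with $D(0)=0$ and $D'(0)=\gamma_2^{p-2}\bigl(p\gamma_2-(N-p)\bigr)>0$, where now the strict positivity is the known inequality $\gamma_2>(N-p)/p$; since $\alpha>p$, the power $r^{\alpha-p-\epsilon}$ tends to $+\infty$ as $r\to\infty$, which gives the inequality for $|x|>R_1$ with $R_1$ large. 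In each case one also checks that $v$ is positive on the relevant domain and lies in $\mathcal{D}^{1,p}$ there --- the integrability of $|\nabla v|^p$ near the singular point uses precisely $\gamma_1<(N-p)/p$, resp. $\gamma_2>(N-p)/p$ --- and that the pointwise inequality upgrades to the weak one: for part (1) one integrates by parts over $B_{R_0}\setminus B_\sigma$ against $\varphi\in C^\infty_0(B_{R_0})$ and lets $\sigma\to0$, the spherical boundary term being $O\bigl(\sigma^{\,N-p-(p-1)\gamma_1}\bigr)\to0$; for part (2) the test functions are supported away from the singular set, so no extra term appears.

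The main obstacle is the sign bookkeeping in the perturbation expansion. Because both $|x|^{-\gamma_1}$ and $(1-\delta)|x|^{-\gamma_1}$ solve $-L_p(\cdot)=0$, the naive leading correction of $-L_p v$ vanishes identically, and the favourable sign only emerges from the derivative at $\epsilon=0$ of the coefficient $C(\epsilon)$ (resp. $D(\epsilon)$), which is positive thanks to the strict inequalities $\gamma_1<(N-p)/p<\gamma_2$. Making this rigorous requires controlling all remainder terms uniformly for $r$ in a small ball (resp. a large exterior domain), which is what forces $\delta$ to be small and fixes the scales $R_0,R_1$; the degenerate case $\mu=0$, $p\ne2$ must be split off because there $\gamma_1^{p-2}=0$ and the first correction lives at order $\delta^{p-1}$ instead of order $\delta$.
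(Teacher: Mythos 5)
Your proof is correct and is essentially the paper's own argument: compute $-L_p v$ for the radial ansatz, note that the zeroth-order term cancels by the characteristic equation \eqref{eq: gamma 1 and gamma 2}, and observe that the first-order correction has the favourable sign precisely because $\gamma_1<(N-p)/p<\gamma_2$. After simplifying with the defining relation $\gamma_1^{p-1}\bigl(N-p-(p-1)\gamma_1\bigr)=\mu$, your $C(\epsilon)$ reduces to $\gamma_1^{p-2}\epsilon(N-p-p\gamma_1+\epsilon)$, which is exactly the paper's $h'(0)/(p-1)$ with $h(t)=|\gamma-(\gamma-\epsilon)t|^{p-2}[k(\gamma-\epsilon)t-k(\gamma)]-\mu|1-t|^{p-2}(1-t)$. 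One genuine improvement over the paper's write-up is your explicit treatment of the degenerate case $\mu=0$, $p\ne 2$: there $\gamma_1=0$, so the factor $\gamma_1^{p-2}$ is $0$ (for $p>2$) or undefined (for $p<2$), the paper's claim ``$h'(0)>0$'' fails, and indeed $h(t)=\epsilon^{p-2}k(-\epsilon)\,t^{p-1}$ so the first nonvanishing correction is of order $\delta^{p-1}$ rather than $\delta$; your separate direct computation ($-L_pv=(\delta\epsilon)^{p-1}(N-p+(p-1)\epsilon)\,r^{(p-1)\epsilon-p}$) closes this gap, which the paper's proof as written leaves open. You also spell out the pointwise-to-weak upgrade and the $\mathcal{D}^{1,p}$ membership, which the paper dismisses as ``easy to prove.''
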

 \begin{proof}
Let $\gamma, \delta,\epsilon\in \mathbb{R}$ and define function $v(x)=|x|^{-\gamma}(1-\delta|x|^{\epsilon})$.
Direct computation shows that
\begin{eqnarray*}
-L_{p}v=\frac
{h(\delta|x|^{\epsilon})}{\left|1-\delta|x|^{\epsilon}\right|^{p-2}\left(1-\delta|x|^{\epsilon}\right)|x|^{p}}
{|v|^{p-2}v},&& \hbox{for}\, x\ne 0,\end{eqnarray*} where
  \[
h(t)\equiv|\gamma-(\gamma-\epsilon)t|^{p-2}[k(\gamma-\epsilon)t-k(\gamma)]-\mu
|1-t|^{p-2}(1-t),\quad t\in[0,\infty),\] and
$k(t)\equiv(p-1)t^{2}-(N-p)t, t\ge 0$. It is easy to prove that $v$ is a weak solution of equation  (\ref{eq: 2.4.1}) and equation (\ref{eq: 2.4.2}) with $$g(x)=\frac
{h(\delta|x|^{\epsilon})}{\left|1-\delta|x|^{\epsilon}\right|^{p-2}
\left(1-\delta|x|^{\epsilon}\right)|x|^{p}}
.$$
It remains to prove  that $g$ satisfies those properties mentioned in the proposition.

Note that $h(0)=-|\gamma|^{p-2}k(\gamma)-\mu$, where
$k(\gamma)=(p-1)\gamma^{2}-(N-p)\gamma$. Thus by the definition of
$\gamma_1$ and $\gamma_2$, as in (\ref{eq: gamma 1 and gamma 2}),
we have $h(0)=0$ when $\gamma=\gamma_1$ or $\gamma=\gamma_2$.

Also we have
$$h'(0)=(p-1)\gamma^{p-2}\left(-p\gamma+N-p+\epsilon \right)\epsilon.$$
So \begin{equation*}h'(0)>0\quad\hbox{if}\,
\gamma=\gamma_1,\epsilon>0\,\,\hbox{or}\,\,
\gamma=\gamma_2,\epsilon<0.\end{equation*} Therefore, there exists
$1>\delta_h>0$ such that $2h'(0)t\ge h(t)\ge \frac 12 h'(0)t>0$
for $t\in(0,\delta_h)$.

To obtain (1), we choose  $\delta=\min\{\delta_h,1/2\}, \epsilon=(p-\alpha)/2>0$ and
$$R_0=\min\left\{1,\left(\frac
{\delta}{2A} h'(0)\right)^{1/(p-\alpha-\epsilon)}\right\}.$$
Then  function $v(x)=|x|^{-\gamma_1}(1-\delta|x|^{\epsilon})$ is positive in $\mathcal{D}^{1,p}(B_{R_0})$ since $\gamma_1<(N-p)/p, \delta>0, \epsilon>0$  and function $g$ given above satisfies
$$ A|x|^{-\alpha}\le\frac
{\delta}2 h'(0)|x|^{\epsilon-p}\le g(x) \le 2^p {\delta}
h'(0)|x|^{\epsilon-p}, \qquad\forall\,x\in B_{R_0}.$$ The last
 inequality implies that $g\in L^{\frac Np}(B_{R_0})$.

(2) is obtained similarly.
 \end{proof}

\section{Comparison principle}

In this section, we prove the comparison principle for subsolutions and supersolutions to equation (\ref{eq: general equation}). We start with the following pointwise estimate.

\begin{lemma}
\label{lem:auxilury lemma}   For all weakly differentiable
positive functions $u,v$ on a domain $\Omega$, we have for $p\geq2$,
\[|\nabla u|^{p-2}\nabla u\cdot\nabla \left(u-\frac{v^{p}}{u^p}u\right)
+|\nabla v|^{p-2}\nabla v\cdot\nabla\left(v-\frac{u^{p}}{v^p}v\right)\ge C_{p}(u^{p}+v^{p})|\nabla\log
u-\nabla\log v|^{p};\]
 and also for $1<p<2$,
\[|\nabla u|^{p-2}\nabla u\cdot\nabla \left(u-\frac{v^{p}}{u^p}u\right)
+|\nabla v|^{p-2}\nabla v\cdot\nabla\left(v-\frac{u^{p}}{v^p}v\right)\ge C_{p}(u^{p}+v^{p})\frac{|\nabla\log
u-\nabla\log v|^{2}}{(|\nabla\log u|+|\nabla\log v|)^{2-p}},
\] where $C_p$ are positive constants depending only on $p$.
\end{lemma}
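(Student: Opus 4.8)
The plan is to reduce both inequalities to a single classical quantitative convexity estimate for the map $\xi\mapsto|\xi|^p$ on $\R^N$, applied twice. Since $u$ and $v$ are positive and weakly differentiable, the vector fields $a:=\nabla u/u=\nabla\log u$ and $b:=\nabla v/v=\nabla\log v$ are defined a.e., and all the computations below are understood pointwise a.e., with the gradients of the composite expressions computed by the formal Leibniz rule. Writing $\varphi:=u-\tfrac{v^p}{u^p}u=u-v^pu^{1-p}$, a direct differentiation gives
\[\nabla\varphi=u\Big[\big(1+(p-1)(v/u)^p\big)a-p(v/u)^p\,b\Big],\]
and, since $|\nabla u|^{p-2}\nabla u=u^{p-1}|a|^{p-2}a$ and $u^p(v/u)^p=v^p$,
\[|\nabla u|^{p-2}\nabla u\cdot\nabla\varphi=\big(u^p+(p-1)v^p\big)|a|^p-p\,v^p\,|a|^{p-2}(a\cdot b).\]
Performing the symmetric computation for $\psi:=v-\tfrac{u^p}{v^p}v$ (swap $u\leftrightarrow v$, $a\leftrightarrow b$) and adding, I would regroup the sum by the factors $u^p$ and $v^p$ to obtain
\[S:=|\nabla u|^{p-2}\nabla u\cdot\nabla\varphi+|\nabla v|^{p-2}\nabla v\cdot\nabla\psi=u^p\,\Phi(a,b)+v^p\,\Phi(b,a),\]
where $\Phi(\xi,\eta):=|\xi|^p+(p-1)|\eta|^p-p|\eta|^{p-2}(\xi\cdot\eta)=|\xi|^p-|\eta|^p-p|\eta|^{p-2}\eta\cdot(\xi-\eta)$ is the convexity defect of $\xi\mapsto|\xi|^p$.

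With this identity the lemma becomes the assertion that $\Phi(\xi,\eta)\ge C_p|\xi-\eta|^p$ for $p\ge2$ and $\Phi(\xi,\eta)\ge C_p|\xi-\eta|^2/(|\xi|+|\eta|)^{2-p}$ for $1<p<2$, the right-hand side being read as $0$ when $\xi=\eta$. This is the standard refinement of the inequality $\Phi\ge0$; I would either quote it from Lindqvist \cite{Lin} (whose test-function idea is being followed here) or from the usual monotonicity estimates for the $p$-Laplacian, or derive it from the elementary pointwise estimates
\[|\zeta+\omega|^p\ge|\zeta|^p+p|\zeta|^{p-2}\zeta\cdot\omega+c_p|\omega|^p\quad(p\ge2),\]
\[|\zeta+\omega|^p\ge|\zeta|^p+p|\zeta|^{p-2}\zeta\cdot\omega+c_p\frac{|\omega|^2}{(|\zeta|+|\omega|)^{2-p}}\quad(1<p<2),\]
applied with $\zeta=\eta$ and $\omega=\xi-\eta$, using $|\eta|+|\xi-\eta|\le 2(|\xi|+|\eta|)$ to replace the denominator by $|\xi|+|\eta|$ at the cost of the constant.

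Finally I would apply this estimate twice, once with $(\xi,\eta)=(a,b)$ to bound $\Phi(a,b)$ from below and once with $(\xi,\eta)=(b,a)$ to bound $\Phi(b,a)$ from below; since $|a-b|=|b-a|$ and $|a|+|b|=|b|+|a|$, the two bounds coincide, so multiplying by $u^p$ and $v^p$ respectively and summing yields $S\ge C_p(u^p+v^p)|a-b|^p$ when $p\ge2$ and $S\ge C_p(u^p+v^p)|a-b|^2/(|a|+|b|)^{2-p}$ when $1<p<2$. Recalling that $a-b=\nabla\log u-\nabla\log v$ and $|a|+|b|=|\nabla\log u|+|\nabla\log v|$ gives the two displayed inequalities. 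The computations in the first paragraph are routine; the only genuine input is the quantitative convexity inequality, and the only points needing care are the bookkeeping of the factor $(v/u)^p$ and its cancellation against $v^p=u^p(v/u)^p$, and the degenerate set $\{\nabla\log u=\nabla\log v\}$ (on which both sides vanish) in the subquadratic case.
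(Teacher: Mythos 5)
Your proof is correct and follows essentially the same route as the paper: both expand the two dot products in terms of $\nabla\log u$ and $\nabla\log v$ and then invoke the Lindqvist quantitative convexity inequalities $|\zeta+\omega|^p\ge|\zeta|^p+p|\zeta|^{p-2}\zeta\cdot\omega+c_p|\omega|^p$ (for $p\ge2$) and its subquadratic analogue. The only cosmetic difference is that you regroup the sum as $u^p\Phi(a,b)+v^p\Phi(b,a)$ before applying the inequality, whereas the paper bounds each dot product separately and lets the terms $\pm(|\nabla u|^p-|\nabla v|^p)$ cancel upon addition; the two organizations are algebraically equivalent.
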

\begin{proof}Let $u,v$ be two weakly differentiable positive
functions, and
$\eta_{1}=u-\frac{v^{p}}{u^p}u$, $\eta_{2}=v-\frac{u^{p}}{v^p}v$.
Then
\[
 |\nabla u|^{p-2}\nabla u\cdot \nabla\eta_{1}= |\nabla u|^{p}-v^{p}\left(|\nabla\log u|^{p}
 +p|\nabla\log u|^{p-2}\nabla\log u\cdot(\nabla\log v-\nabla\log
 u)\right),
\]
and
\[
|\nabla v|^{p-2}\nabla v\cdot\nabla\eta_{2}=|\nabla v|^{p}
-u^{p}\left(|\nabla\log v|^{p}+p|\nabla\log v|^{p-2}\nabla\log
v\cdot(\nabla\log u-\nabla\log v)\right).
 \]

When $p\geq2$,   applying the following elementary inequality (see
\cite{Lin}):
\begin{eqnarray*}
|a|^{p}\geq|b|^{p}+p|b|^{p-2}b\cdot(a-b)+\frac{|a-b|^{p}}{2^{p-1}-1},&&\forall\,
a,b\in\mathbb{R}^{N},
\end{eqnarray*}
 we obtain that,
\begin{eqnarray*}
|\nabla u|^{p-2}\nabla u\cdot\nabla\eta_{1}
  & \geq & |\nabla u|^{p}-v^{p}(|\nabla\log v|^{p}-C_{p}|\nabla\log u-\nabla\log
 v|^{p})\\
 & = & |\nabla u|^{p}-|\nabla v|^{p}+C_{p}v^{p}|\nabla\log u-\nabla\log
 v|^{p},
\end{eqnarray*}
and that
\[
|\nabla v|^{p-2}\nabla v\cdot\nabla\eta_{2}\geq |\nabla
v|^{p}-|\nabla u|^{p}+C_{p}u^{p}|\nabla\log u-\nabla\log v|^{p}.
\]
Adding these two inequalities together,  we finish the proof of the lemma in the case $p\ge 2$.

The case $1<p<2$ follows from the following elementary inequality (see
\cite{Lin}):
\begin{eqnarray*}
|a|^{p}\geq|b|^{p}+p|b|^{p-2}b\cdot(a-b)+C_{p}\frac{|a-b|^{2}}{(|a|+|b|)^{2-p}},&&\forall\,
a,b\in\mathbb{R}^{N},
\end{eqnarray*}
where $C_{p}$ is a positive constant depending only on $p$.
\end{proof}

In the following two theorems, Theorem \ref{thm: Comparison}
 and Theorem \ref{thm: Comparison at infinity},  we prove the comparison principle. Theorem \ref{thm: Comparison} deals with the bounded domains, and Theorem \ref{thm: Comparison at infinity} with the exterior domains.
 \begin{theorem}
\label{thm: Comparison} Let $\Omega$ be a bounded domain and $f\in L^{\frac{N}{p}}(\Omega)$. Let  $u\in \mathcal{D}^{1,p}(\Omega)$ be a weak subsolution of equation (\ref{eq: general equation})
and $v\in \mathcal{D}^{1,p}(\Omega)$  a weak supersolution of
\begin{eqnarray}
-L_{p}v= g|v|^{p-2}v &  &
\hbox{in}\:\Omega\label{eq: supersolution}
\end{eqnarray} such that  $\inf_{\Omega}v>0$,
where  $g$ is a given function in $L^{\frac{N}{p}}(\Omega)$ such that $f\leq g$  in $\Omega$.  If  $u\leq v$ on $\partial\Omega$, then
\begin{eqnarray*}
u\leq v && \hbox{in}\:\Omega.
\end{eqnarray*}
\end{theorem}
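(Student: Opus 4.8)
The plan is to prove the comparison principle $u\le v$ in $\Omega$ by testing the two differential inequalities against the Lindqvist-type test functions and exploiting the pointwise estimate in Lemma \ref{lem:auxilury lemma}. Since $\inf_\Omega v>0$, I may try to work with $w=(u-v)^+$; the natural test functions are $\varphi_1=\eta_1:=\frac{w\cdot(\text{something})}{u^{?}}$ built so that on the set $\{u>v\}$ they reduce to $u-\frac{v^p}{u^p}u$ and $v-\frac{u^p}{v^p}v$ as in the lemma. More precisely, set $U=\max(u,v)$ — wait, rather set, on $\{u>v\}$, $\varphi=u-\frac{v^p}{u^{p-1}}\cdot\frac1u\cdot u=\dots$; the clean choice is to use $\varphi_1=\frac{(u^p-v^p)^+}{u^{p-1}}$ as a test function in the subsolution inequality for $u$, and $\varphi_2=\frac{(u^p-v^p)^+}{v^{p-1}}$ in the supersolution inequality for $v$ (note $v^{p-1}\ge(\inf_\Omega v)^{p-1}>0$, so $\varphi_2$ is admissible). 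Both are nonnegative and, since $u\le v$ on $\partial\Omega$ and $u,v\in\mathcal D^{1,p}(\Omega)$, they lie in the right space with zero boundary trace, so they are legitimate after the usual density/truncation argument.

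First I would record the subsolution inequality tested with $\varphi_1$ and the supersolution inequality tested with $\varphi_2$, then \emph{subtract} the second from the first. On the region $G=\{x\in\Omega:u(x)>v(x)\}$ this produces exactly the left-hand side appearing in Lemma \ref{lem:auxilury lemma} (with the roles arranged so that the gradient terms combine into $|\nabla u|^{p-2}\nabla u\cdot\nabla(u-\tfrac{v^p}{u^p}u)+|\nabla v|^{p-2}\nabla v\cdot\nabla(v-\tfrac{u^p}{v^p}v)$), while the Hardy terms $-\mu|x|^{-p}(\dots)$ combine into $-\mu|x|^{-p}\big(u^p+v^p-\tfrac{v^p}{u^p}\cdot u^{?}-\dots\big)$, which one checks is $\le 0$ pointwise — here the key identity is $u^{p-2}u\cdot\varphi_1 = u^p-v^p$ and $v^{p-2}v\cdot\varphi_2 = \tfrac{v}{v^{p-1}}(u^p-v^p)\cdot v^{p-2}$... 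I would verify that the $\mu$-terms cancel to leave a pointwise nonnegative quantity on $G$, exactly as in Lindqvist's computation. The right-hand side becomes $\int_G (f\,u^{p-1}\varphi_1 - g\,v^{p-1}\varphi_2) = \int_G (f-g)(u^p-v^p) \le 0$, since $f\le g$ and $u^p>v^p$ on $G$.

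Combining, I get
\[
0 \;\ge\; \int_G\Big(|\nabla u|^{p-2}\nabla u\cdot\nabla\big(u-\tfrac{v^p}{u^p}u\big)+|\nabla v|^{p-2}\nabla v\cdot\nabla\big(v-\tfrac{u^p}{v^p}v\big)\Big)
\;\ge\; C_p\int_G (u^p+v^p)\,\Phi_p(\nabla\log u,\nabla\log v),
\]
where $\Phi_p\ge0$ denotes the relevant expression from Lemma \ref{lem:auxilury lemma} (the $|\nabla\log u-\nabla\log v|^p$ term for $p\ge2$, the corresponding ratio for $1<p<2$). Since the integrand is nonnegative and the integral is $\le0$, it must vanish a.e.\ on $G$, forcing $\nabla\log u=\nabla\log v$ a.e.\ on $G$, i.e.\ $\nabla\log(u/v)=0$ on $G$. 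Hence $u/v$ is constant on each connected component of $G$; but on $\partial G\cap\Omega$ one has $u=v$ (and on $\partial\Omega$, $u\le v$), so that constant is $\le1$ by continuity of $u/v$ up to the boundary of $G$ — more carefully, $(u-v)^+\in\mathcal D^{1,p}_0$-type reasoning shows $(u-v)^+\equiv0$, whence $u\le v$ in $\Omega$.

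The main obstacle I expect is the justification that $\varphi_1,\varphi_2$ are admissible test functions (they need to be approximated by $C_0^\infty(\Omega)$ functions with controlled gradients, using $\inf_\Omega v>0$, the $\mathcal D^{1,p}$ regularity of $u,v$, and the boundary condition $u\le v$ on $\partial\Omega$ to get the right trace), together with the bookkeeping that the Hardy/potential terms cancel to something of a favorable sign pointwise rather than merely in integral form. A secondary subtlety is handling the set where $u$ or $v$ may fail to be strictly positive for $u$ (only $v$ is assumed bounded below); this is circumvented because $\varphi_1$ involves $u^{p-1}$ in the denominator only on $\{u>v\}$ where $u>v\ge\inf_\Omega v>0$, so no division by zero occurs. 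Once admissibility is in hand, the rest is the Lindqvist subtraction trick plus Lemma \ref{lem:auxilury lemma}, which are routine.
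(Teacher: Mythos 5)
Your approach is the paper's approach: Lindqvist-type test functions, the pointwise estimate of Lemma \ref{lem:auxilury lemma}, and the sign of $(f-g)(u^p-v^p)^+$. (Two small clarifications: the Hardy terms cancel \emph{exactly}, not merely to something of favorable sign, since $|u|^{p-2}u\,\varphi_1=(u^p-v^p)^+=|v|^{p-2}v\,\varphi_2$ on $\{u>v\}$; and your ``subtraction'' is the paper's ``addition'' under the opposite sign convention for the second test function, $\eta_2=-\varphi_2$.) However, the admissibility step that you set aside as ``the usual density/truncation argument'' hides a genuine gap. The function $\varphi_2=v^{1-p}(u^p-v^p)^+$ need not belong to $\mathcal{D}^{1,p}(\Omega)$: on $\{u>v\}$ it equals $(u/v)^{p}v-v$, its gradient contains a term of order $(u/v)^p|\nabla v|$, and nothing in the hypotheses prevents $u/v$ from being unbounded, so neither $\varphi_2\in L^{p^*}$ nor $\nabla\varphi_2\in L^p$ follows. (Your $\varphi_1$ is harmless because $0<v/u\le1$ on $\{u>v\}$; the asymmetry arises precisely because only $v$ is assumed bounded below.)

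The paper fixes this by truncating: it tests with $\eta_1=u^{1-p}\min\{(u^p-v^p)^+,m\}$ and $\eta_2=-v^{1-p}\min\{(u^p-v^p)^+,m\}$, which \emph{are} admissible, since $(u/v)^p\le 1+m(\inf_\Omega v)^{-p}$ on $\{0\le u^p-v^p\le m\}$, while on $\{u^p-v^p\ge m\}$ they reduce to $m u^{1-p}$ and $-m v^{1-p}$ with $u^p\ge m$ and $v\ge\inf_\Omega v>0$, so in either regime the gradients are controlled by $|\nabla u|+|\nabla v|$. After testing, adding, and invoking Lemma \ref{lem:auxilury lemma}, there remains an extra contribution supported on $\{u^p-v^p\ge m\}$, which the paper bounds by $(p-1)\int_{\{u^p\ge m\}}|\nabla u|^p\to 0$ as $m\to\infty$. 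This $m\to\infty$ limiting step is what your sketch omits. Once the truncation is in place, the remainder of your argument --- Lemma \ref{lem:auxilury lemma}, $\int_{\{u>v\}}(f-g)(u^p-v^p)\le0$, $\nabla\log u=\nabla\log v$ a.e.\ on $\{u>v\}$, hence $u\le v$ --- coincides with the paper's.
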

\begin{proof}
We only prove Theorem \ref{thm: Comparison} in the case when $p\geq 2$. We can prove similarly Theorem \ref{thm: Comparison} in the case when $1<p<2$.
For  $m>1$, it is easy to check that functions $\eta_{1}=u^{1-p}\min{((u^{p}-v^{p})^{+},{m})}$ and
$\eta_{2} =-v^{1-p}\min{((u^{p}-v^{p})^{+},{m})}$ can be taken as test functions
of equations (\ref{eq: general equation}) and equation (\ref{eq: supersolution}) respectively.
 Therefore, substituting $\eta_{1}$ into equation (\ref{eq: general equation}) and $\eta_{2} $ into
 equation (\ref{eq: supersolution}),  and then adding together, we obtain that
\begin{eqnarray*}
  \langle-L_{p}u,\eta_{1}\rangle+\langle-L_{p}v,\eta_{2}\rangle
 & \leq & \int_{\Omega}f|u|^{p-2}u\eta_{1}+\int_{\Omega}g|v|^{p-2}v\eta_{2}\\
  & = & \int_{{\{u^{p}-v^{p}\geq m\}}}m(f-g)+\int_{{\{0\leq u^{p}-v^{p}\leq m\}}}(f-g)(u^{p}-v^{p})\\
 & \leq & 0  \qquad\qquad\hbox{since } f\le g,
\end{eqnarray*}
where $\langle-L_{p}w,\eta\rangle $ is defined as
$$\langle-L_{p}w,\eta\rangle =\int_{\Omega}\left(|\nabla w|^{p-2}\nabla
w \cdot\nabla \eta-\frac{\mu}{|x|^{p}}|w|^{p-2}w\eta\right)$$ for
all $w,\eta\in \mathcal{D}^{1,p}_0(\Omega) $.

By the definition of $\eta_1,\eta_2$, we obtain that
\begin{eqnarray*}
\langle-L_{p}u,\eta_{1}\rangle & = & \int_{\{u^{p}-v^{p}\geq
m\}}\left(|\nabla u|^{p-2}\nabla u
\cdot\nabla(mu^{1-p})-\mu\frac{u^{p-1}}{|x|^{p}}mu^{1-p}\right)\\
  &  & +\int_{\{0\leq u^{p}-v^{p}\leq m\}}\left(|\nabla u|^{p-2}\nabla  u\cdot\nabla
  \left(u-\frac{v^{p}}{u^{p}}u\right)-\mu\frac{u^{p-1}}{|x|^{p}}\left(u-\frac{v^{p}}{u^{p}}u\right)\right),
\end{eqnarray*}
and  \begin{eqnarray*} \langle-L_{p}v,\eta_{2}\rangle & = &
\int_{\{u^{p}-v^{p}\geq m\}}\left(|\nabla v|^{p-2}\nabla
v\cdot\nabla(-mv^{1-p})+\mu\frac{v^{p-1}}{|x|^{p}}mv^{1-p}\right)\\
 &  & +\int_{\{0\leq u^{p}-v^{p}\leq m\}}\left(|\nabla v|^{p-2}\nabla
 v\cdot\nabla\left(v-\frac{u^{p}}{v^{p}}v\right)-\mu\frac{v^{p-1}}{|x|^{p}}\left(v-\frac{u^{p}}{v^{p}}v\right)\right)
\end{eqnarray*} respectively.  Hence we have
\begin{eqnarray*}
\langle-L_{p}u,\eta_{1}\rangle
&= & \int_{\{u^{p}-v^{p}\geq m\}}\left(m(1-p)u^{-p}|\nabla u|^{p}-m\mu |x|^{-p}\right)\\
 &  & +\int_{\{0\leq u^{p}-v^{p}\leq m\}}\left(|\nabla u|^{p-2}\nabla
 u\cdot\nabla\left(u-\frac{v^{p}}{u^{p}}u\right)-\mu\frac{u^{p}-v^p}{|x|^{p}}\right),
\end{eqnarray*} and
\[
\langle-L_{p}v,\eta_{2}\rangle
 \geq  \int_{\{u^{p}-v^{p}\geq m\}}m\mu |x|^{-p}
  +\int_{\{0\leq u^{p}-v^{p}\leq m\}}\left(|\nabla v|^{p-2}\nabla
 v\cdot\nabla\left(v-\frac{u^{p}}{v^{p}}v\right)-\mu\frac{v^{p}-u^p}{|x|^{p}}\right),
\]
since  $|\nabla v|^{p-2}\nabla
v\cdot\nabla(-mv^{1-p})\ge 0$.
Therefore we obtain that
\begin{eqnarray*}
 \langle-L_{p}u,\eta_{1}\rangle+\langle-L_{p}v,\eta_{2}\rangle & \geq & \int_{\{u^{p}-v^{p}\geq m\}}m(1-p)u^{-p}|\nabla u|^{p}\\
  &  & +\int_{\{0\leq u^{p}-v^{p}\leq m\}}\left(|\nabla u|^{p-2}\nabla u\cdot\nabla\left(u-\frac{v^{p}}
 {u^{p}}u\right)+|\nabla v|^{p-2}\nabla v\cdot\nabla\left(v-\frac{u^{p}}{v^{p}}v\right)\right).
 \end{eqnarray*}
 Estimate the right hand side of the above equation as follows: for
the first term we have
 \[
  (p-1)\int_{{\{u^{p}-v^{p}\geq m\}}}mu^{-p}|\nabla u|^{p}
\leq  (p-1)\int_{{\{u^{p}\geq m\}}}|\nabla u|^{p}\rightarrow  0\quad\hbox{as}\:
 m\rightarrow\infty,
\]
since $\lim_{m\to \infty}|\{u^{p}\geq m\}|=0$,  and  for  the second term we  apply Lemma
\ref{lem:auxilury lemma} to obtain that
\[
|\nabla u|^{p-2}\nabla u\cdot\nabla\left(u-\frac{v^{p}}{u^{p}}u\right)+|\nabla v|^{p-2}
\nabla v\cdot\nabla\left(v-\frac{u^{p}}{v^{p}}v\right) \ge C_{p} (u^{p}+v^{p})|\nabla\log u-\nabla\log v|^{p}
\] for some positive constant $C_p$ depending only on $p$.
Recall that $\langle-L_{p}u,\eta_{1}\rangle+\langle-L_{p}v,\eta_{2}\rangle\le 0$.
Letting $m\rightarrow\infty$,  we obtain that
\begin{eqnarray*}
\int_{{\{0\leq u^{p}-v^{p}\}}}(u^{p}+v^{p})|\nabla\log
u-\nabla\log v|^{p}=0,
\end{eqnarray*}
which implies that
\[\log u=\log v+C\quad \hbox{on}\: \{x\in \Omega; u(x)\ge v(x)\},\]
i.e.
\[ u=Cv\quad \hbox{on}\, \{x\in \Omega; u(x)\ge v(x)\},\]
for some positive constant $C>0$.
Since we assume in the theorem that $\inf_{\Omega}v>0$, it follows that $C=1$. This implies that
\[u\le v,\quad \hbox{in}\: \Omega.\]
This finishes the proof of Theorem \ref{thm: Comparison} in the case $p\ge 2$.
 \end{proof}

Corresponding  comparison principle in exterior domains is given in the
following theorem.

\begin{theorem}
\label{thm: Comparison at infinity} Let $\Omega$ be an exterior
domain such that $\Omega^c=\mathbb{R}^N\backslash \Omega $ is
bounded and $f\in L^{\frac Np}(\Omega)$. Let $u\in\mathcal{D}^{1,p}(\Omega)$ be a subsolution of equation \eqref{eq: general equation}
and $v\in\mathcal{D}^{1,p}(\Omega)$  a positive supersolution of
\begin{eqnarray}
-L_{p}v= g|v|^{p-2}v &  & \hbox{in}\: \Omega,\label{eq:
supersolution 2}
\end{eqnarray}
such that $\inf_{\pa\Om}v>0$, where functions  $g$ belongs to  $ L^{\frac{N}{p}}(\Omega)$ and  $f\leq g$ in $\Omega$.   Moreover, assume that
\begin{equation}\limsup_{R\to\infty}
\frac{1}{R}\int_{B_{2R}\backslash B_{R}}u^{p}|\nabla\log
v|^{p-1}=0. \label{eq: special condition on v}\end{equation}
If $u\leq
v$ on $\partial \Omega$, then
\begin{eqnarray*}
u\leq v &  & \hbox{in}\: \Omega.
\end{eqnarray*}
\end{theorem}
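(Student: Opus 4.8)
The plan is to mimic the proof of Theorem~\ref{thm: Comparison}, with the extra ingredient of a cutoff at infinity to compensate for the unboundedness of the domain. First I would fix a large radius $R>1$ big enough so that $\Omega^c\subset B_{R/2}$, and pick a Lipschitz cutoff $\xi=\xi_R$ with $0\le\xi\le1$, $\xi\equiv1$ on $B_R$, $\xi\equiv0$ outside $B_{2R}$, and $|\nabla\xi|\le C/R$. As in the bounded case, for $m>1$ set $w=\min\bigl((u^p-v^p)^+,m\bigr)$ and use the test functions $\eta_1=\xi^p u^{1-p}w$ and $\eta_2=-\xi^p v^{1-p}w$ in equations \eqref{eq: general equation} and \eqref{eq: supersolution 2} respectively; these are admissible because $u\le v$ on $\partial\Omega$ (so $w$ vanishes near $\partial\Omega$), $\xi$ has compact support, and $\inf_{\partial\Omega}v>0$ keeps the powers $v^{1-p}$ under control where $w\ne0$. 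Adding the two inequalities, the right-hand side is $\int m(f-g)\xi^p$ over $\{u^p-v^p\ge m\}$ plus $\int (f-g)(u^p-v^p)\xi^p$ over $\{0\le u^p-v^p\le m\}$, which is $\le0$ since $f\le g$; so $\langle-L_pu,\eta_1\rangle+\langle-L_pv,\eta_2\rangle\le0$.

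Next I would expand the left-hand side. The Hardy-type $\mu/|x|^p$ terms cancel or combine favorably exactly as in Theorem~\ref{thm: Comparison} (the key identity $\mu\frac{u^{p-1}}{|x|^p}mu^{1-p}$ balancing against its $v$-counterpart, and on $\{u^p\ge v^p\}$ the sign works out), so those contribute a nonnegative remainder that can be discarded. The gradient terms produce, on $\{u^p-v^p\ge m\}$, a term bounded by $(p-1)\int_{\{u^p\ge m\}}\xi^p|\nabla u|^p$ which tends to $0$ as $m\to\infty$ because $|\{u^p\ge m\}|\to0$ and $\nabla u\in L^p$; and on $\{0\le u^p-v^p\le m\}$ the term $\xi^p\bigl(|\nabla u|^{p-2}\nabla u\cdot\nabla(u-\tfrac{v^p}{u^p}u)+|\nabla v|^{p-2}\nabla v\cdot\nabla(v-\tfrac{u^p}{v^p}v)\bigr)$, to which Lemma~\ref{lem:auxilury lemma} applies, giving a lower bound $C_p\,\xi^p(u^p+v^p)|\nabla\log u-\nabla\log v|^p$ (for $p\ge2$; the obvious analogue for $1<p<2$). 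The new feature compared to the bounded case is the cross terms generated when $\nabla$ hits $\xi^p$ rather than $w$ or the powers of $u,v$: these are of the form $p\xi^{p-1}w\bigl(u^{1-p}|\nabla u|^{p-2}\nabla u\cdot\nabla\xi - v^{1-p}|\nabla v|^{p-2}\nabla v\cdot\nabla\xi\bigr)$, and on $\{0\le u^p-v^p\le m\}$ one has $w\le u^p$, so after using $|\nabla u|^{p-1}u^{1-p}=u|\nabla\log u|^{p-1}$ and Young's inequality these are controlled by $\varepsilon\int\xi^p(u^p+v^p)(|\nabla\log u|^p+|\nabla\log v|^p)$ plus $C_\varepsilon R^{-p}\int_{B_{2R}\setminus B_R}(u^p+v^p)(\cdots)$; the truly dangerous piece — the one that does not close under Young because $v$ rather than $u$ supplies the power — is exactly $\frac1R\int_{B_{2R}\setminus B_R}u^p|\nabla\log v|^{p-1}$, which is where hypothesis \eqref{eq: special condition on v} enters.

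So the order is: (i) set up cutoff and test functions, justify admissibility; (ii) get $\langle-L_pu,\eta_1\rangle+\langle-L_pv,\eta_2\rangle\le0$; (iii) expand, cancel the $\mu$-terms, discard the $\{u^p\ge m\}$ gradient term in the limit $m\to\infty$, apply Lemma~\ref{lem:auxilury lemma} to the main quadratic term; (iv) bound the $\nabla\xi$ cross terms, absorbing the generic part into the left side and estimating the leftover $v$-gradient part via Hölder by a constant times $\bigl(\tfrac1R\int_{B_{2R}\setminus B_R}u^p|\nabla\log v|^{p-1}\bigr)$ times a quantity that stays bounded (using $\nabla v\in L^p$, so $\int_{B_{2R}\setminus B_R}|\nabla\log v|^p\cdot v^p\lesssim\|\nabla v\|_p^p$, hence the companion factor is uniformly bounded); (v) first let $m\to\infty$, then let $R\to\infty$ using \eqref{eq: special condition on v} to kill the boundary-layer remainder, concluding
\[
\int_{\{0\le u^p-v^p\}}(u^p+v^p)|\nabla\log u-\nabla\log v|^p=0,
\]
so $u=Cv$ on $\{u\ge v\}$ for some constant $C$; (vi) since $\inf_{\partial\Omega}v>0$ and $u\le v$ on $\partial\Omega$, the boundary values force $C\le1$, whence $u\le v$ throughout $\Omega$.

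The main obstacle I anticipate is precisely step (iv)–(v): making the cutoff errors disappear. Unlike the bounded-domain proof, the $\nabla\xi$ terms are genuinely present, and while most of them are harmless (either absorbed by the good term on the left, or controlled by $R^{-p}\|\nabla u\|_p^p+R^{-p}\|\nabla v\|_p^p\to0$ together with the finiteness of $\int_{B_{2R}\setminus B_R}(|\nabla u|^p+|\nabla v|^p)\to0$ by absolute continuity), the mixed term involving $u$ against a power/gradient of $v$ is not symmetric and is the reason the hypothesis \eqref{eq: special condition on v} is imposed; one has to organize the estimate so that exactly this quantity, and nothing worse, is what must vanish. A secondary technical point is checking carefully that $\eta_1,\eta_2$ are legitimate test functions in $\mathcal{D}^{1,p}$ with compact support after multiplication by $\xi^p$, which is routine given $\inf_{\partial\Omega}v>0$ and the truncation by $m$, but should be stated.
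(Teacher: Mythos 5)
Your overall strategy matches the paper's exactly: cut off at radius $R$, use Lindqvist-type test functions truncated at level $m$, apply Lemma~\ref{lem:auxilury lemma} to extract the good term $C_p(u^p+v^p)|\nabla\log u-\nabla\log v|^p$, and then show that the error from $\nabla\xi$ dies as $m,R\to\infty$, using \eqref{eq: special condition on v} for the one piece that cannot be controlled by $u,v\in\mathcal D^{1,p}$. The paper uses the cutoff $\eta$ linearly rather than as $\xi^p$, which simplifies bookkeeping but is not a real difference, and (as you noted) the $\mu/|x|^p$ contributions cancel exactly because $u^{p-1}\eta_1 = -v^{p-1}\eta_2$ on the truncated set. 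Your identification of the ``dangerous'' boundary-layer term $\tfrac1R\int_{B_{2R}\setminus B_R}u^p|\nabla\log v|^{p-1}$ and of the role of hypothesis \eqref{eq: special condition on v} is correct and is precisely how the paper uses it ($J_2$ in their notation).

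The one genuine misstep is in your step (iv). You propose to treat the generic $\nabla\xi$-cross terms with Young's inequality and then ``absorb'' the resulting $\varepsilon\int\xi^p(u^p+v^p)(|\nabla\log u|^p+|\nabla\log v|^p)$ into the left-hand side. That absorption cannot work: $C_p(u^p+v^p)|\nabla\log u-\nabla\log v|^p$ is not bounded below by any positive multiple of $(u^p+v^p)(|\nabla\log u|^p+|\nabla\log v|^p)$ --- just take $\nabla\log u=\nabla\log v\ne 0$ and the former vanishes while the latter does not. Moreover one of the Young byproducts, $\varepsilon\int\xi^p u^p|\nabla\log v|^p$, is not even known to be finite, since only $v^p|\nabla\log v|^p=|\nabla v|^p$ is integrable. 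The paper sidesteps Young entirely: on $\{0\le u^p-v^p\le m\}$ one has $0\le u-\tfrac{v^p}{u^p}u\le u$ and $|v-\tfrac{u^p}{v^p}v|\le v+u^pv^{1-p}$, so the cross term is bounded pointwise by $\bigl(u|\nabla u|^{p-1}+v|\nabla v|^{p-1}+u^p|\nabla\log v|^{p-1}\bigr)|\nabla\eta|$; the first two pieces go to $0$ by H\"older (the factors $1/R$ and $|B_{2R}\setminus B_R|^{1/N}$ cancel, leaving $\|u\|_{p^*,B_{2R}\setminus B_R}\|\nabla u\|_p^{p-1}\to0$ by absolute continuity), and the third is killed by \eqref{eq: special condition on v}. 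If you keep the Young route, you must (a) never apply it to the mixed $u^p|\nabla\log v|$ piece, (b) observe that the remaining Young byproducts are each $\varepsilon$ times a quantity bounded uniformly in $R$ (e.g.\ $\int\xi^p|\nabla u|^p\le\|\nabla u\|_p^p$), and (c) add a final $\varepsilon\to0$ pass after $R\to\infty$ rather than claiming absorption; but this is strictly more work than the paper's direct H\"older bound.
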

\begin{proof}
 We only prove Theorem \ref{thm: Comparison at infinity} in  the case when $p\ge
 2$. We can prove similarly Theorem \ref{thm: Comparison at infinity} in  the case when
 $1< p< 2$.
Fix $R>2\hbox{diam}(\Omega^c)$ and $m\ge 1$. Let $\eta\in
C_{0}^{\infty}(B_{2R})$ be a cut-off function  such that $0\leq\eta\leq1$, $\eta\equiv 1$
on $B_{R}$ and $|\nabla\eta|\le\frac{2}{R}$. Substituting  test functions
\[ \varphi_{1}=\eta
u^{1-p}\min{((u^{p}-v^{p})^{+},m)}\quad\hbox{and}\quad\varphi_{2}=-\eta
v^{1-p}\min{((u^{p}-v^{p})^{+},m)} \] into equation (\ref{eq: general equation}) and  equation (\ref{eq: supersolution 2})
respectively, and  adding together,  we have
\begin{eqnarray*}
  \langle-L_{p}u,\varphi_{1}\rangle+\langle-L_{p}v,\varphi_{2}\rangle
 & \leq & \int_{\Omega}f|u|^{p-2}u\varphi_{1}+\int_{\Omega}g|v|^{p-2}v\varphi_{2}\\
  & = & \int_{{\{u^{p}-v^{p}\geq m\}}}m(f-g)\eta+\int_{{\{0\leq u^{p}-v^{p}\leq m\}}}(f-g)(u^{p}-v^{p})\eta\\
 & \leq & 0  \qquad\qquad\hbox{since}\: f\le g.
\end{eqnarray*}
On the other hand, by the definition of $\varphi_1$ and
$\varphi_2$, we have
\begin{eqnarray*}
 &  & \langle-L_{p}u,\varphi_{1}\rangle+\langle-L_{p}v,\varphi_{2}\rangle\\
 & = & \int_{\{0\le u^{p}-v^{p}\le m\}}\left(\eta|\nabla u|^{p-2}\nabla u\cdot\nabla\left(u-\frac{v^{p}}{u^{p}}u\right)+
 \eta|\nabla v|^{p-2}\nabla v\cdot \nabla\left(v-\frac{u^{p}}{v^{p}}v\right)\right)\\
 &  & +\int_{\{0\le u^{p}-v^{p}\le m\}}\left(\left(u-\frac{v^{p}}{u^{p}}u\right)|\nabla u|^{p-2}
 \nabla u\cdot \nabla\eta+\left(v-\frac{u^{p}}{v^{p}}v\right)
 |\nabla v|^{p-2}\nabla v\cdot \nabla\eta\right)\\
 &  & +\int_{\{u^{p}-v^{p}\ge m\}}|\nabla u|^{p-2}\nabla u\cdot \nabla(m\eta u^{1-p})+\int_{\{u^{p}-v^{p}\ge m\}}|\nabla v|^{p-2}\nabla v\cdot \nabla(-m\eta v^{1-p})\\
 & =: & I_{1}+I_{2}+I_{3}+I_{4}.
\end{eqnarray*}
 Lemma \ref{lem:auxilury lemma} implies that
\begin{eqnarray*}
I_{1} & \geq & C_p\int_{\{0\le u^{p}-v^{p}\le m\}}\eta(u^{p}+v^{p})|\nabla\log u-\nabla\log v|^{p}\\
 & \geq & C_p\int_{\{0\le u^{p}-v^{p}\le m\}\cap B_{R}}(u^{p}+v^{p})|\nabla\log u-\nabla\log v|^{p},
\end{eqnarray*}
where $C_p>0$ is independent of $m,R$.

We estimate $I_{k}$ $(k=2,3,4)$ as follows. For $I_2$, we have
\begin{eqnarray*}
|I_{2}|  &\leq & \int_{\{0\le u^{p}-v^{p}\}}\left(|\nabla
u|^{p-1}|u|+|\nabla v|^{p-1}v\right)|\nabla\eta|
+\int_{\{0\le u^{p}-v^{p}\}}u^{p}|\nabla\log v|^{p-1}|\nabla\eta|    \nonumber\\
&\leq& \frac{2}{R}\int_{B_{2R}\backslash B_{R}}\left(|\nabla
u|^{p-1}|u|+|\nabla v|^{p-1}v\right) +
\frac{2}{R}\int_{B_{2R}\backslash B_{R}}u^{p}|\nabla\log v|^{p-1}
 =:  J_{1}+J_{2}.\label{eq:J2}
\end{eqnarray*}
 H\"older's inequality implies that
\begin{eqnarray*}
J_{1} & \leq & C\left(\int_{B_{2R}\backslash B_{R}}|\nabla u|^{p}\right)^{\frac{p-1}{p}}
\left(\int_{B_{2R}\backslash B_{R}}|u|^{p^{*}}\right)^{1/p^{*}}+C\left(\int_{B_{2R}
\backslash B_{R}}|\nabla v|^{p}\right)^{\frac{p-1}{p}}\left(\int_{B_{2R}\backslash B_{R}}|v|^{p^{*}}\right)^{1/p^{*}}\\
 & = & o(1)\quad\quad as\: R\rightarrow\infty,
\end{eqnarray*}
 and by assumption \eqref{eq: special condition on v}
\begin{eqnarray*}
 J_{2}\to0 & & \hbox{as}\: R\to\infty.
\end{eqnarray*}
Note that $J_1,J_2$ are independent of $m$, so
\[
\lim_{m,R\to\infty}I_{2}=0.
\]
For $I_{3}$, it is easy to see that
\[
I_{3} =m\int_{\{u^{p}-v^{p}\ge m\}}\left(|\nabla u|^{p-2}\nabla
u\cdot\nabla\eta u^{1-p}+(1-p)\eta u^{-p}|\nabla u|^{p}\right).
\]
 So
\[
|I_{3}| \le \int_{\{u^{p}\ge m\}}\left(|\nabla
u|^{p-1}|\nabla\eta|u+(p-1)\eta|\nabla u|^{p}\right)\le
J_1+(p-1)\int_{\{u^{p}\ge m\}}|\nabla u|^{p}.
\]
Since the level set $\{u^{p}\ge m\}$  vanishes as $m\to\infty$,
one deduces that
\[
\lim_{m,R\to\infty}I_{3}=0.
\]
For the last term $I_{4}$, we have
\begin{eqnarray*}
I_{4} & = & \int_{\{u^{p}-v^{p}\ge m\}}\left(m(p-1)\eta v^{-p}|\nabla v|^{p}-mv^{1-p}|\nabla v|^{p-2}\nabla v\cdot\nabla\eta\right)\\
 & \ge & -m\int_{\{u^{p}-v^{p}\ge m\}}|\nabla\log v|^{p-1}|\nabla\eta|\\
 & \ge & -\int_{\{u^{p}\ge m\}}|\nabla\log
 v|^{p-1}|\nabla\eta|u^{p}\ge -J_2,
\end{eqnarray*}
 which converges to zero as $m,R\to\infty$ by assumption (\ref{eq: special condition on v}). Hence combining
together all above estimates we obtain that
\[
0\ge \limsup_{R,m\to\infty}\left(\langle-L_{p}u,\varphi_{1}\rangle+\langle-L_{p}v,\varphi_{2}\rangle\right)\ge
C\int_{\{u\ge v\}}(u^{p}+v^{p})|\nabla\log u-\nabla\log v|^{p}.
\]
Thus
\[
\int_{\{u\ge v\}}(u^{p}+v^{p})|\nabla\log u-\nabla\log v|^{p}=0,
\]
 which implies that $u\le  v$ in $\Omega$ as proved before. This finishes the
proof.
\end{proof}

\section{Proofs of main results}
In the following we will  prove Theorem \ref{thm:given growth comparison at origin 1}, Theorem \ref{thm:given growth comparison at origin 2}, Theorem \ref{thm:given growth comparison at infinity 1} and Theorem \ref{thm:given growth comparison at infinity 2} first, and then we prove Theorem \ref{thm:main result} and Theorem \ref{thm: inverse of main resuls}.
 \begin{proof}[Proof of Theorem \ref{thm:given growth comparison at
origin 1}]  Let  $u\in \mathcal{D}^{1,p}(\Omega)$ be a
subsolution to equation (\ref{eq: general equation})  with $f\in L^{\frac Np}(\Omega)$
satisfying $f(x)\leq A|x|^{-\alpha}$ in $\Omega$ for some constants
$A>0,p>\alpha$.

By Proposition \ref{prop:computation of
sub-super solution},  the function $v(x)=|x|^{-\gamma_1}(1-\delta|x|^{\epsilon})$ is a positive supersolution  of equation
(\ref{eq: supersolution})  in $B_{R_0}\subset\Omega$  with
function $g\in L^{\frac Np}(B_{R_0})$ satisfying $g(x)\ge
A|x|^{-\alpha }$ for all $x\in B_{R_0}$, where $1>\delta,\epsilon,
R_0>0$ are constants depending on $N,p,\mu,A,\alpha$.  Obviously we have $g\ge f$ in $B_{R_0}$.

 Let $k
>0$ and define function $\tilde{v}=C(M+k)v$, where $C=\sup_{\partial B_{R_0}}v^{-1},M=\sup_{\partial B_{R_0}}u^+$. Then $\tilde{v}$ is also a positive supersolution to equation
\eqref{eq: supersolution} in the ball $B_{R_0}$ with the same function $g$
as above. Moreover,  $\inf_{B_{R_0}}\tilde{v}=M+k>0$ and $u\le
\tilde{v}$ on $\partial B_{R_0}$. Thus we can  apply  Theorem
\ref{thm: Comparison} to the subsolution $u$ of equation
\eqref{eq: general equation} and the supersolution $\tilde{v}$ of
equation (\ref{eq: supersolution})  on the ball $B_{R_0}$ to
conclude  that
\begin{eqnarray*}u(x)\le \tilde{v}(x) \leq C\Big(\sup_{\partial B_{R_0}}u^+
+k\Big)|x|^{-\gamma_{1}},&&\hbox{for}\: x\in B_{R_0}
\end{eqnarray*}
 with  constant  $C=\sup_{\partial B_{R_0}}v^{-1}$ independent of $k$. Now Theorem \ref{thm:given growth comparison at
origin 1} follows by taking $k\to 0$.\end{proof}

\begin{proof}[Proof of Theorem \ref{thm:given growth comparison at
origin 2}] Suppose that $ u\in \mathcal{D}^{1,p}(\Omega)$ is a
nonnegative supersolution to equation (\ref{eq: general equation})
with $f\ge 0$. Then $u$ is also a  supersolution to equation
(\ref{eq: supersolution}) with $g= 0$. To prove the theorem, we
may assume that $m\equiv \inf_{\Omega}u>0$, otherwise  Theorem
\ref{thm:given growth comparison at origin 2} is trivial since we
assume that $u\ge 0$.

Define function
$v(x)=Cm {|x|}^{-\gamma_1}$ in $\Omega$, where
$C=\inf_{\partial\Omega}|x|^{\gamma_1}$. Then $v$ is a  solution of
equation (\ref{eq:  general equation}) with $f\equiv 0$,  thus
 a subsolution of equation (\ref{eq:  general equation}) with $f\equiv 0$. Obviously
there holds $u\ge v$ on $\partial \Omega$. So applying Theorem
\ref{thm: Comparison} to the subsolution $v$ of
equation (\ref{eq:  general equation}) and the supersolution
$u$ of
equation (\ref{eq: supersolution}) on the domain $\Omega$,  we conclude that
\begin{eqnarray*}u(x)\ge v(x)=Cm {|x|}^{-\gamma_1}&&\hbox{for}\: x\in \Omega. \end{eqnarray*}
This finishes the proof of Theorem \ref{thm:given growth comparison at
origin 2}.\end{proof}

\begin{proof}[Proof of Theorem \ref{thm:given growth comparison at
infinity 1}]  Let  $u\in \mathcal{D}^{1,p}(\Omega)$ be a
subsolution to equation (\ref{eq: general equation}) with $f\in L^{\frac
Np}(\Omega)$ satisfying $f(x)\leq A|x|^{-\alpha}$ in $\Omega$ for some
constants $A>0,\alpha>p$.

By Proposition \ref{prop:computation of sub-super solution}, the function $v(x)=|x|^{-\gamma_2}(1-\delta|x|^{-\epsilon})$ is a positive supersolution  of equation
(\ref{eq: supersolution 2})  in $B_{R_1}^c$  with function $g\in
L^{\frac Np}(B_{R_1}^c)$ satisfying $g(x)\ge A|x|^{-\alpha }\ge
f(x)$ for all $x\in B_{R_1}^c$, where
 $1>\delta,\epsilon>0, R_1>1$ are constants  depending on
$N,p,\mu,A,\alpha$.

 Let $k
>0$ and define $\tilde{v}=C(M+k)v$, where $C=\sup_{\partial B_{R_1}^c}v^{-1},M=\sup_{\partial B_{R_1}^c}u^+$. Then $\tilde{v}$ is a positive supersolution to
\eqref{eq: supersolution 2} in  $B_{R_1}^c$ with the same function $g$
given above. Moreover,  $\inf_{\pa B_{R_1}^c}\tilde{v}=M+k>0$ and
$u\le \tilde{v}$ on $\partial B_{R_1}^c$. We verify the condition
(\ref{eq: special condition on v}) as follows: by H\"older's
inequality
\[
\frac{1}{R}\int_{B_{2R}\backslash B_{R}}u^{p}|\nabla\log
v|^{p-1}\le \frac C{R^{p}}\int_{B_{2R}\backslash B_{R}}u^{p}\leq
C\left(\int_{B_{2R}\backslash B_{R}}|u|^{p^{*}}\right)^{1/p^{*}},
\]
where $C$ is a constant independent of $R$. The first
inequality follows by noting that $|\nabla \log v(x)|\le
C|x|^{-1}$ with some constant  $C$  independent of $R$. Therefore
(\ref{eq: special condition on v}) holds since $u\in L^{p^*}(\R^N)$.

 Thus we can  apply  Theorem \ref{thm: Comparison at infinity} to the
subsolution $u$ of equation \eqref{eq:  general equation} and the
supersolution $\tilde{v}$ of equation (\ref{eq: supersolution 2})
in $B_{R_1}^c$ to conclude  that
\begin{eqnarray*}u(x)\le \tilde{v}(x) \le  C\Big(\sup_{\partial B_{R_1}}u^+
+k\Big)|x|^{-\gamma_{2}}&&\hbox{for}\: x\in B_{R_1}^c,
\end{eqnarray*}
 with  constant  $C=\sup_{\partial B_{R_1}^c}v^{-1}$  independent of $k$. Now the theorem follows by taking $k\to 0$. \end{proof}

\begin{proof}[Proof of Theorem \ref{thm:given growth comparison at
infinity 2}] Suppose that  $ u\in \mathcal{D}^{1,p}(\Omega)$ is a
nonnegative supersolution to (\ref{eq: general equation}) with
$f\ge 0$, then $u$ is a nonnegative supersolution to equation
\eqref{eq: supersolution 2} with $g= 0$. We may assume that
$m\equiv \inf_{\partial \Omega}u>0$, otherwise the theorem is
trivial  since we assume that $u\ge 0$. The positivity of $u$ in
$\Omega$ is a consequence of the fact   that $u$ is also a
nonnegative supersolution to $p-$Laplacian equation
$$-\Delta_p u\ge   0$$
 in $\Omega$ since $-\Delta_p u\ge \mu{u^{p-1}}/{|x|^p}\ge 0$.  Moreover, it is well known (see
\cite{Lindqvist2}) that
\begin{equation}\int_{B_{2R}\backslash B_R}|\nabla \log
u|^p\le CR^{N-p}\label{eq: log growth},\end{equation} for all
$R$ large enough and $C>0$  a constant independent of $R$.

Let $C_1=\inf_{x\in\partial \Omega}|x|^{\gamma_2}$. The function
$v=C_1 m|x|^{-\gamma_2}$ is a solution to (\ref{eq:  general
equation}) with $f=0$, and thus a subsolution to  (\ref{eq:
general equation}) with $f=0$. Condition (\ref{eq: special
condition on v}) on $u$ and $v$ is also satisfied by (\ref{eq: log
growth}) and H\"older's inequality:
\begin{eqnarray*}
R^{-1}\int_{B_{2R}\backslash B_{R}}v^{p}|\nabla\log u|^{p-1}&\le
& CR^{-1-p\gamma_2}\left(\int_{B_{2R}\backslash B_{R}}|\nabla\log
u|^{p}\right)^{\frac {p-1}p}|B_{2R}(0)\backslash B_R(0)|^{\frac 1p}\\
&\leq& CR^{-1-p\gamma_2+\frac {p-1}p (N-p)+\frac
Np}\\ &\to& 0\quad\text{as} \,R\to \infty,
\end{eqnarray*}
since $-1-p\gamma_2+{\frac {p-1}p}(N-p)+\frac Np<0$ due to the
fact that $\gamma_2>\frac {N-p}p$.

Thus we can  apply  Theorem \ref{thm: Comparison at infinity} to
the supersolution $u$ of equation (\ref{eq: supersolution 2}) and
the subsolution $v$ of equation \eqref{eq:  general equation} in
$B_{R_1}^c$ to conclude  that \begin{eqnarray*}u(x)\ge v(x) =
C_1 m|x|^{-\gamma_{2}}&&\hbox{for}\: x\in B_{R_1}^c.
\end{eqnarray*}
This completes the proof of Theorem \ref{thm:given growth comparison at
infinity 2}.\end{proof}

Now we  prove Theorem \ref{thm:main result} and Theorem \ref{thm: inverse of main resuls}.

\begin{proof}[Proof of Theorem
 \ref{thm:main result}]
 Let $u$  be a weak  solution of (\ref{eq:object}).
By Proposition \ref{thm:poor estimate}, there exists a positive constant $C $ depending on $N,p,\mu,||Q||_{\infty}$ and $u$ such that
\begin{eqnarray*}
|u(x)|\leq {C}{|x|^{-\left(\frac{N-p}{p}-\tau_0\right)}} &  &
\hbox{for}\:|x|< r_0,\end{eqnarray*} and that
\begin{eqnarray*}
|u(x)|\leq  {C}{|x|^{-\left(\frac{N-p}{p}+\tau_1\right)}} &  &
\hbox{for}\:|x|> r_1,
\end{eqnarray*}
 for some constants $
\tau_0,\tau_1, r_0,r_1>0$ depending on $N,p,\mu, ||Q||_{\infty}$ and $u$.

  To prove
(\ref{eq:origin growth}) and (\ref{eq:infinity decay}), we regard both
$ u$ and $-u$ as subsolutions of equation \eqref{eq: general equation}
 with function $f$ given by $f=Q|u|^{p^{*}-p}$. Then $f\in
L^{\frac Np}(\R^N)$ and it holds that
\begin{eqnarray*}
|f(x)|\leq {C}{|x|^{-\alpha}} &  & \hbox{for}\:|x|< r_0,\end{eqnarray*} and that
\begin{eqnarray*}|f(x)|\leq{C}{|x|^{-\beta}} &  & \hbox{for}\:|x|> r_1,
\end{eqnarray*}
with $\alpha=\left(\frac{N-p}{p}-\tau_0\right)(p^*-p)<p$ and
 $\beta=\left(\frac{N-p}{p}+\tau_1\right)(p^*-p)>p$.

Therefore, we can apply  Theorem \ref{thm:given growth comparison at origin 1}
 and   Theorem \ref{thm:given growth comparison at infinity 1}  to $\pm u$ in the ball $B_{r_0}$
 and in the exterior ball $B_{r_1}^c$ respectively to conclude that   (\ref{eq:origin growth}) and (\ref{eq:infinity
decay}) hold.
\end{proof}

\begin{proof}[Proof of Theorem
 \ref{thm: inverse of main resuls}] Let $u$  be a nonnegative weak  solution of (\ref{eq:object}) and $Q\in L^{\infty}(\R^N)$ be a nonnegative function.
We  regard $u$ as a nonnegative
supersolution of equation (\ref{eq: general equation})  with function
$f\equiv
 Q|u|^{p^*-p}$.  Then
 \eqref{eq:origin growth 1} and \eqref{eq:infinity decay
2} follow from  Theorem \ref{thm:given growth comparison at origin
2} and Theorem \ref{thm:given growth comparison at infinity 2} respectively.
 This completes the proof of
Theorem \ref{thm: inverse of main resuls}.\end{proof}

\noindent\emph{Acknowledgements.}   The author is financially
supported by the Academy of Finland, project 259224. He wishes to
express his  gratitude to Prof. Xiao Zhong for his guidance in the
preparation of the paper. He also wishes to thank Prof. Daomin Cao
and Shusen Yan for suggesting the problem.  He acknowledges
fruitful discussion with Prof. Daomin Cao  during his one-month
stay in the University of Jyv\"asky\"a in 2011.


\begin{thebibliography}{500}
\bibitem{Ambrosetti1999}
 \textsc{A. Ambrosetti,  J. Garcia Azorero, I. Peral},  \emph{Perturbation of
 $\Delta u+u^{(N+2)/(N-2)}=0,$
 the scalar curvature problem in $\mathbb{R}^N$, and related topics.}  J. Funct. Anal. \textbf{165} (1999), no. 1, 117-149.


\bibitem{Bianchi1993} \textsc{G. Bianchi, H. Egnell,} \emph{A variational approach to the equation
$\Delta u+Ku^{(N+2)/(N-2)}=0$ in $\mathbb{R}^N$.}  Arch. Rational
Mech. Anal. \textbf{122} (1993), no. 2,  159-182.

\bibitem{Peral2004}
\textsc{A. Boumediene, I. Peral,  F. Veronica},  \emph{Existence
and multiplicity for perturbations of an equation involving a
Hardy inequality and the critical Sobolev exponent in the whole of
$R^N$.} Adv. Differential Equations \textbf{9} (2004), no. 5-6,
481-508.



\bibitem{B} \textsc{A. Boumediene, F. Veronica, I. Peral}, \emph{Existence and nonexistence
results for quasilinear elliptic equations involving the
$p$-Laplacian.}  Boll. Unione Mat. Ital. Sez. B Artic. Ric. Mat.
(8) \textbf{9}  (2006), no. 2, 445-484.



\bibitem{Cab} \textsc{X. Cabr\'e,  Y. Martel}, \emph{Weak eigenfunctions for the linearization
of extremal elliptic problems.} J. Funct. Anal. \textbf{156}
(1998), no. 1, 30-56.

 \bibitem{Caffarelli1989} \textsc{L. A. Caffarelli, B. Gidas, J. Spruck},  \emph{Asymptotic symmetry and local
  behavior of semilinear elliptic equations with critical Sobolev growth.} Comm. Pure Appl. Math. \textbf{42} (1989), no. 3, 271-297.



\bibitem{Cao} \textsc{D. Cao, P. Han}, \emph{Solutions for semilinear elliptic
equations with critical exponents and Hardy potential.} J.
Differential Equations \textbf{205} (2004), no. 2, 521-537.


\bibitem{Cao2} \textsc{D. Cao, P. Han}, \emph{Solutions to critical elliptic
equations with multi-singular inverse square potentials.} J.
Differential Equations \textbf{224} (2006), no. 2, 332-372.

\bibitem{Cao4} \textsc{D. Cao, S. Peng, S. Yan}, \emph{Infinitely many solutions
for p-Laplacian equation involving critical Sobolev growth.} J.
Funct. Anal. \textbf{262} (2012), no. 6, 2861-2902.

\bibitem{Cao3} \textsc{D. Cao, S. Yan}, \emph{Infinitely many solutions for
an elliptic problem involving critical Sobolev growth and Hardy
potential.} Calc. Var. Partial Differential Equations \textbf{38}
(2010), no. 3-4, 471-501.

\bibitem{C}  \textsc{F. Catrina, Z.Q. Wang}, \emph{On the Caffarelli-Kohn-Nirenberg
inequalities: sharp constants, existence (and nonexistence), and
symmetry of extermal functions.} Comm. Pure Appl. Math.
\textbf{54} (2001), no. 2,  229-258.



\bibitem{ChouKS1993} \textsc{K.S. Chou, C.W. Chu}, \emph{On the best constant for
a weighted Sobolev-Hardy inequality.} J. London Math. Soc. (2)
\textbf{48}  (1993), no. 1, 137-151.



\bibitem{DiBenedettoTrudinger1984}  \textsc{E. DiBenedetto, N.S. Trudinger},  \emph{Harnack inequalities for quasiminima of
 variational integrals.} Ann. Inst. H. Poincar\'e Anal. Non Lin\'eaire  \textbf{1} (1984), no. 4, 295-308.


\bibitem{DingWY1985} \textsc{W. Ding, W.M. Ni,} \emph{On the elliptic equation $\Delta u+Ku^{(n+2)/(n-2)}=0$
and related topics}. Duke Math. J. \textbf{52} (1985), no. 2, 485-506.

\bibitem{Escobar1986} \textsc{J. Escobar, R. Schoen}, \emph{Conformal metrics with prescribed
scalar curvature}. Invent. Math. 86 (1986), no. 2,  243-254.

\bibitem{E} \textsc{E. Egnell}, \emph{Elliptic boundary value problems with singular
coefficients and critical nonlinearities.} Indiana Univ. Math. J.
\textbf{38} (1989), no. 2, 235-251.

\bibitem{F} \textsc{A. Ferrero, F. Gazzola}, \emph{Existence of solutions for singular
critical growth semilinear elliptic equations.} J. Differential
Equations. \textbf{177} (2001), no. 2, 494-522.

\bibitem{GNN} \textsc{ B. Gidas, W.M. Ni, L. Nirenberg}, \emph{Symmetry and related
properties via the maximum principle.} Comm. Math. Phys.
\textbf{68} (1979), no. 3, 209-243.

\bibitem{Veron1989} \textsc{M. Guedda, L. V\'eron}, \emph{Quasilinear elliptic equations
involving critical Sobolev exponents.} Nonlinear Anal. \textbf{13
} (1989), no. 8, 879-902.

\bibitem{H} \textsc{P. Han}, \emph{Quasilinear elliptic problems with critical
exponents and Hardy terms.} Nonlinear Anal. \textbf{61} (2005), no. 5,
735-758.

\bibitem{H2} \textsc{P. Han}, \emph{Asymptotic behavior of solutions to semilinear elliptic equations with Hardy potential.} Proc. Amer. Math. Soc.
\textbf{135} (2007), no. 2, 365-372.

\bibitem{LinFanghua} \textsc{Q. Han, F.H. Lin}, \emph{Elliptic partial differential
equations.} Courant Institute of Mathematical sciences (1997).



\bibitem{Lin} \textsc{P. Lindquist}, \emph{On the equation $\operatorname{div}(|\nabla u|^{p-2}\nabla u)+\lambda|u|^{p-2}u=0$.} Proc. Amer. Math. Soc. \textbf{109} (1990), no. 1, 157-164.


\bibitem{Lindqvist2} \textsc{P. Lindqvist}, \emph{Notes on the p-Laplace equation.} Report.
University of Jyv\"askyl\"a  Department of Mathematics and
Statistics, 102. University of Jyv\"askyl\"a, Jyv\"askyl\"a, 2006.


\bibitem{Moser1960} \textsc{J. Moser,} \emph{A new proof of De Giorgi's theorem concerning the regularity problem for elliptic differential equations}.
Comm. Pure Appl. Math. \textbf{13} (1960)  457-468.

\bibitem{Smets} \textsc{D. Smets}, \emph{Nonlinear Schr\"oinger equations with Hardy
potential and critical nonlinearities.} Trans. Amer. Math. Soc.
\textbf{357} (2005), no. 7, 2909-2938.

\bibitem{Terracini} \textsc{S. Terracini}, \emph{On positive entire solutions to
a class of equations with a singular coefficient and critical
exponent.} Adv. Differential Equations \textbf{1} (1996), no. 2,
241-264.

\bibitem{YanSS2010} \textsc{J. Wei, S. Yan}, \emph{Infinitely many solutions for the
prescribed scalar curvature problem on $\mathbb{S}^N$.} J. Funct.
Anal. \textbf{258 } (2010), no. 9, 3048-3081.


\end{thebibliography}
\end{document}